\newtheorem{theorem}{Theorem}
\newtheorem{lemma}[theorem]{Lemma}
\newtheorem{corollary}[theorem]{Corollary}
\newtheorem{example}{Example}
\newtheorem{proposition}[theorem]{Proposition}
\theoremstyle{definition}
\newtheorem{remark}[theorem]{Remark}
\newtheorem{definition}[theorem]{Definition}
\newtheorem{problem}[theorem]{Problem}
\newcommand{\eref}[1]{(\ref{e.#1})}
\newcommand{\fref}[1]{Figure \ref{f.#1}}
\newcommand{\pref}[1]{Proposition \ref{p.#1}}
\newcommand{\tref}[1]{Theorem \ref{t.#1}}
\newcommand{\cref}[1]{Corollary \ref{c.#1}}
\newcommand{\lref}[1]{Lemma \ref{l.#1}}
\newcommand{\sref}[1]{Section \ref{s.#1}}
\newcommand{\mycite}[1]{#1 \cite{#1}}
\numberwithin{theorem}{section}
\numberwithin{equation}{section}
\numberwithin{figure}{section}
\newcommand{\R}{\mathbb{R}}
\newcommand{\Z}{\mathbb{Z}}
\newcommand{\N}{\mathbb{N}}
\newcommand{\ep}{\varepsilon}
\newcommand{\cof}{\operatorname{cof}}
\newcommand{\conv}{\operatorname{conv}}
\newcommand{\semi}{\operatorname{semi}}
\newcommand{\dist}{\operatorname{dist}}
\newcommand{\diam}{\operatorname{diam}}
\newcommand{\interior}{\operatorname{int}}
\newcommand{\id}{1}
\renewcommand{\P}{\mathbb{P}}
\newcommand{\E}{\mathbb{E}}
\newcommand{\lalpha}{\beta}
\renewcommand{\tilde}[1]{\widetilde{#1}}
\newcommand{\eps}{\varepsilon}
\newcommand{\red}{\normalcolor}
\newcommand{\nc}{\normalcolor}
\begin{document}

\title{The Limit Shape of Convex Hull Peeling}

\author{Jeff Calder}
\address{Department of Mathematics, University of Minnesota}
\email{jcalder@umn.edu}

\author{Charles K Smart}
\address{Department of Mathematics, The University of Chicago}
\email{smart@math.uchicago.edu}

\begin{abstract}
  We prove that the convex peeling of a random point set in dimension $d$ approximates motion by the $1/(d+1)$ power of Gaussian curvature.  We use viscosity solution theory to interpret the limiting partial differential equation.  We use the Martingale method to solve the cell problem associated to convex peeling.  Our proof follows the program of \mycite{Armstrong-Cardaliaguet} for homogenization of geometric motions, but with completely different ingredients.
\end{abstract}

\maketitle

\section{Introduction}

\subsection{Overview}

The ordering of multivariate data is an important and challenging problem in statistics.  One dimensional data can be ordered linearly from least to greatest, and the study of the distributional properties of this ordering is the subject of order statistics.  An important order statistic is the median, or middle, of the dataset.  In statistics, the median is generally preferred over the mean due to its robustness with respect to noise.  In dimensions $d \geq 2$, there is no obvious generalization of the one dimensional order statistics, and no obvious candidate for the median.  As such, many different types of orderings, and corresponding definitions of median, have been proposed for multivariate data.  One of the first surveys on the ordering of multivariate data was given by \mycite{Barnett}.  More recent surveys are given by \mycite{Small} and \mycite{Liu-Parelius-Singh}.

In his seminal paper, \mycite{Barnett} introduced the idea of convex hull ordering.  The idea is to sort a finite set $X \subseteq \R^d$ into convex layers by repeatedly removing the vertices of the convex hull.  The process of sorting a set of points into convex layers is called convex hull peeling, convex hull ordering, and sometimes onion-peeling, as in \mycite{Dalal}.  The index of the convex layer that a sample belongs to is called its convex hull peeling depth.  This peeling procedure will eventually exhaust the entire dataset, and the convex hull median is defined as the centroid of the points on the final convex layer.  Convex hull ordering is used in the field of robust statistics, see \mycite{Donoho-Gasko} and \mycite{Rousseeuw-Struyf}, and is particularly useful in outlier detection, see \mycite{Hodge-Austin}.

Since affine transformations preserve the convexity of sets, the convex layers of a set of points are invariant under affine transformations.  Using this symmetry, \mycite{Suk-Flusser} use convex peeling to recognize sets deformed by projection.  This is important, for example, in computer vision, where a common task is the recognition of objects viewed from different angles.  There are also some applications of convex hull peeling to fingerprint identification, see \mycite{Poulos-Papavlasopoulos-Chrissikopoulos}, and algorithmic drawing, see \mycite{Hodge-Austin}.

In this paper, we show that the convex layers of a random set of points converge in the large sample size limit to the level sets of the solution of a partial differential equation (PDE).  The solutions of our PDE have the property that their level sets evolve with a normal velocity given by the $1/(d+1)$ power of Gaussian curvature multiplied by a spatial weight.  When the weight is constant, our PDE is known as affine invariant curvature motion, see \mycite{Cao}, and affine flow, see \mycite{Andrews}, and in two dimensions as affine curve shortening flow, see \mycite{Angenent-Sapiro-Tannenbaum}, \mycite{Moisan}, and \mycite{Sapiro-Tannenbaum}.  We use the level-set method of \mycite{Evans-Spruck} to make sense of the limiting equation.

The high level outline of our proof is identical to that of \mycite{Armstrong-Cardaliaguet}, who supplied the prototype for quantitative homogenization of random geometric motion.

\subsection{Main Result}

\begin{figure}[h]
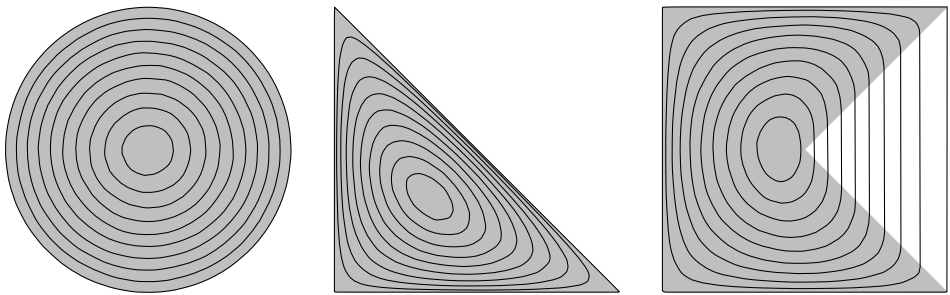

  \input{ball.tikz}
  \hfill
  \input{triangle.tikz}
  \hfill
  \input{nonconvex.tikz}
  \caption{Peels $K_{1+\lfloor k n/10 \rfloor}$ for $k = 0, ..., 9$ of the convex peeling $K_1, ..., K_n$ of $10^5$ points selected independently and uniformly at random from the three different shaded sets.}
  \label{f.peels}
\end{figure}

The convex peeling of a set $X \subseteq \R^d$ is the nested sequence of closed convex sets defined by
\begin{equation*}
  K_1(X) = \conv(X)
  \quad \mbox{and} \quad
  K_{n+1} = \conv(X \cap \interior(K_n(X))),
\end{equation*}
where $\conv(X)$ denotes the convex hull of $X$ and $\interior(K)$ denotes the interior of $K$.  Several examples of convex hull peeling are displayed in \fref{peels}.

It is convenient to encode the convex peeling of $X$ as a function, by stacking the interiors of the peels:
\begin{equation}
  \label{e.hdef}
  h_X = \sum_{n \geq 1} \id_{\interior(K_n(X))}.
\end{equation}
We call $h_X : \R^d \to \N \cup \{ + \infty \}$ is the convex height function of $X$.  We are interested in the shape of $h_X$ for random finite sets $X \subseteq \R^d$.  The starting point of our work is the following result.

\begin{theorem}[\mycite{Dalal}]
\label{t.dalal}
  There is a constant $C > 0$ such that, if $X_n \subseteq \R^d$ consists of $n$ points chosen independently and uniformly at random from the unit ball $B_1$, then $C^{-1} n^{2/(d+1)} \leq \E[\max h_{X_n}] \leq C n^{2/(d+1)}$.
\end{theorem}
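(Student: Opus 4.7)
My plan is to reduce both bounds to the classical R\'enyi--Sulanke estimate that $n$ i.i.d.\ uniform points in $B_1$ have expected convex-hull vertex count $\Theta(n^{(d-1)/(d+1)})$. Let $N_k$ denote the number of points of $X_n$ surviving after $k$ peels (so $N_0 = n$) and let $V_k = N_k - N_{k+1}$. If the surviving sample were uniformly distributed in $K_k$ at each step, R\'enyi--Sulanke would give $\mathbb{E}[V_k] \asymp N_k^{(d-1)/(d+1)}$, and the ODE $\dot N = -cN^{(d-1)/(d+1)}$ would have extinction time $\Theta(n^{2/(d+1)})$. The theorem is the rigorous form of this prediction.

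For the upper bound I would argue via a floating-body / cap-covering estimate. With probability at least $1 - n^{-10}$, every cap of $B_1$ of volume at least $C \log n / n$ contains a sample point, so all vertices of $K_1$ lie in the wet annulus of $B_1$ of width of order $(\log n / n)^{2/(d+1)}$. Applied inductively---now using caps of volume at least $C(k+1)/n$ to absorb the fact that up to $O(k n^{(d-1)/(d+1)})$ sample points may already have been removed---each peel retreats the boundary of the current hull by at least the corresponding width, so the total number of peels is $O(n^{2/(d+1)})$ after the logarithm is removed by a second-moment refinement.

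For the lower bound I would use the deterministic inequality $\max h_{X_n} \geq (n - O(d))/\max_k V_k$, which holds because the peels partition $X_n$ modulo at most $d$ coplanar leftovers, combined with Jensen's inequality to pass from $\max_k V_k$ to its expectation. It remains to show $\mathbb{E}[\max_k V_k] \leq C n^{(d-1)/(d+1)}$. A Chernoff / VC-type estimate gives that any convex shell inside $B_1$ of width of order $n^{-2/(d+1)}$ contains at most $C n^{(d-1)/(d+1)}$ sample points with overwhelming probability, uniformly over all such shells; and since the vertices of any peel $K_k$ all lie in such a shell attached to $\partial K_k$ (by the cap-cover argument above), a union bound over the at most $O(n^{2/(d+1)})$ peels yields the claim.

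The principal obstacle is controlling the geometry of the random intermediate hulls $K_k$ uniformly in $k$ and passing the cap-covering estimate through the iteration. One must propagate nondegeneracy (bounded John-ellipsoid aspect ratio) of $K_k$ and absorb the loss from previously removed points into the cap volume threshold, which is what contributes the $k$-dependence in the upper-bound calculation above. Once this bookkeeping is carried out, both inequalities follow by standard discrete comparison with the governing ODE $\dot N = -cN^{(d-1)/(d+1)}$.
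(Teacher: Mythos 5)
The statement you are proving is cited from Dalal and not reproved in the paper; the closest in-paper analogues are the tail bounds of \lref{lower} and \lref{upper} for semiconvex peeling, whose upper bound the authors explicitly adapt from Dalal. Measured against that, your lower bound is close to a correct (different) argument, but your upper bound has a directional error that I do not think can be repaired along the lines you propose.

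The gap is the claim that ``each peel retreats the boundary of the current hull by at least the corresponding width.'' The cap-covering event (every cap of volume at least $C\log n/n$ contains a sample point) implies that the hull of the survivors \emph{contains} the corresponding floating body, i.e.\ each peel retreats by \emph{at most} that width; it gives no lower bound on the retreat. Indeed $K_{j+1}=\conv(X\cap\interior(K_j))$ retains every non-vertex sample point, however close it lies to $\partial K_j$, so $\partial K_{j+1}$ can be arbitrarily close to $\partial K_j$, and the induction ``at most (total depth)/(minimal retreat) peels'' never closes. What your cap argument does prove is the useful \emph{reverse} implication: a vertex $x$ of $K_j$ is cut off by a closed halfspace whose cap contains only $x$ and points of the first $j-1$ layers, hence (on the cap-covering event) a cap of volume at most $C(1+V_1+\cdots+V_{j-1}+\log n)/n$; so layer-$j$ points lie within distance of order $\bigl(jn^{-2/(d+1)}\bigr)^{2/(d+1)}$ of $\partial B_1$, and a sample point near the center therefore has depth at least $c\,n^{2/(d+1)}$. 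That is the \emph{lower} bound. The upper bound requires a genuinely different mechanism, which is what Dalal supplies and what \lref{upper} adapts: the dynamic-programming inequality $h_X(x)\le \#(X\cap\Omega_x)+\max_{y\in S(x)}h_X(y)$ over a tree of parabolic caps, iterated so that the depth is bounded by the maximum, over the exponentially many chains of caps descending to the boundary, of the number of sample points in the union of a chain; Chernoff bounds for the counts then beat the $e^{Cx_d}$ many chains. This bypasses entirely the control of the intermediate hulls $K_k$ that you identify as the principal obstacle.

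Your lower-bound route ($\max h_{X_n}\ge n/\max_k V_k$, Jensen, and $\E[\max_k V_k]\le Cn^{(d-1)/(d+1)}$) is viable and different from the floating-body route above: the last estimate amounts to bounding the largest subset of $X_n$ in convex position, which is indeed $\Theta(n^{(d-1)/(d+1)})$. But the ``Chernoff/VC-type'' justification is not right as stated: for $d\ge2$ the convex sets do not form a VC class, and the uniform deviation of the empirical measure over convex sets is controlled by bracketing entropy, landing exactly at the critical rate $n^{-2/(d+1)}$ (Schmidt/Stute-type discrepancy bounds). With that substitution the lower bound goes through; the upper bound needs to be rebuilt along the chain-of-caps line.
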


We strengthen the above result to
\begin{equation*}
  \E\left[\max h_{X_n} \right] \sim n^{2/(d+1)},
\end{equation*}
and we show that the rescaled height functions $n^{-2/(d+1)} h_{X_n}$ converge almost surely to the limit
\begin{equation*}
  \alpha \tfrac{(d+1)}{2d} (1 - |x|^{2d/(d+1)}),
\end{equation*}
where $\alpha > 0$ depends only on dimension.  In fact, we prove something stronger:

\begin{theorem}
  \label{t.main}
  Let $U \subseteq \R^d$ be convex, open, and bounded, let $f \in C(\overline U)$ satisfy $f > 0$, and, for $m > 1$, let $X_m \sim Poisson(m f)$.  For every $\ep > 0$, there is a $\delta > 0$ such that
  \begin{equation}
    \label{e.convergence}
    \P[\sup_{\bar U} |m^{-2/(d+1)} h_{X_m} - \alpha h| > \ep] \leq \exp( - \delta (\log m)^{-2} m^{1/3(d+1)} ),
  \end{equation}
  where \red $\alpha>0$ depends only on dimension, \nc and $h \in C(\bar U)$ is the unique viscosity solution of
  \begin{equation}
    \label{e.bvp}
    \begin{cases}
      \langle Dh, \cof(-D^2 h) Dh \rangle = f^2 & \mbox{in } U \\
      h = 0 & \mbox{on } \partial U.
    \end{cases}
  \end{equation}
\end{theorem}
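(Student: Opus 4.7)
The plan is to cast the problem in the framework of stochastic homogenization for fully nonlinear PDE, following the program of \mycite{Armstrong-Cardaliaguet}. At mesoscopic scales, the peeling of $X_m$ near a point $x_0$ should look like the peeling of a unit-intensity Poisson process near a translated and affinely rescaled interface, so the limit is determined by a cell problem whose effective rate produces both the operator $\langle Dh, \cof(-D^2 h) Dh \rangle$ and the constant $\alpha$. The macroscopic convergence is then extracted by a perturbed test function argument, and comparison for \eqref{e.bvp} forces uniqueness of the limit.

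First I would solve the cell problem. Consider the convex peeling of a unit-intensity Poisson process in a large box with one face tangent to an affine hyperplane $H$, and count the number of peels needed to exhaust a thin slab of depth $\delta$ against $H$. A subadditive ergodic argument combined with the martingale method should show that this count, suitably normalized, concentrates around a deterministic constant depending only on $d$. Affine invariance of convex peeling then reduces the curved interface case — with principal curvatures $\kappa_1, \ldots, \kappa_{d-1}$ — to the flat case by a rescaling whose Jacobian contributes a factor $(\kappa_1 \cdots \kappa_{d-1})^{1/(d+1)}$, the Gaussian curvature power responsible for the cofactor operator in \eqref{e.bvp}. Non-constant $f$ is handled by locally approximating $X_m$ by a Poisson process of constant intensity $m f(x_0)$ near $x_0$ and absorbing the resulting $f^{2/(d+1)}$ factor into the local rescaling.

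For the macroscopic step I would use perturbed test functions. Suppose $\phi$ is a smooth strict subsolution of \eqref{e.bvp} and that $\alpha^{-1} \phi - m^{-2/(d+1)} h_{X_m}$ attains a positive maximum at some interior $x_0$. A level set of $\phi$ through $x_0$ is a smooth convex surface with prescribed curvatures, and on the scale $m^{-1/d}$ the peeling of $X_m$ near $x_0$ coincides, up to affine error, with an instance of the cell problem. The strict inequality $\langle D\phi, \cof(-D^2 \phi) D\phi \rangle < f^2$ at $x_0$, transferred through the cell problem, forces the local peel rate to be strictly below what is needed to sustain the maximum, yielding a contradiction off an event whose probability is controlled by the cell problem concentration. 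The symmetric argument for supersolutions, combined with comparison for \eqref{e.bvp}, produces \eqref{e.convergence}. The quantitative rate arises by applying this to an $O((\log m)^C)$-sized mesh of base points $x_0$ and paraboloid test functions parameterized by $(D\phi(x_0), D^2\phi(x_0))$; the logarithmic union bound is what the $(\log m)^{-2}$ prefactor inside the exponential is there to absorb.

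The main obstacle is the quantitative cell problem. Convex peeling is strongly nonlocal: a single anomalous Poisson point can propagate through many subsequent peels via a cascade of vertex changes, so naive independence across regions fails. The martingale argument must be set up via a monotone coupling and a carefully chosen filtration — most naturally one that reveals Poisson points from outside in, indexed by the growing complement of the current peel — whose increments have quantitatively bounded influence. Reaching the rate $\exp(-\delta(\log m)^{-2} m^{1/3(d+1)})$ will require optimizing the trade-off between the microscopic spatial scale $m^{-1/d}$ and the probabilistic error per cell, and ensuring that the passage from cell problem to macroscopic PDE loses only polylogarithmic factors. This is more delicate than the corresponding step in \mycite{Armstrong-Cardaliaguet}, because the primitive dynamical object here — convex peeling — is inherently global rather than local.
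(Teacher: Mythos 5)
Your high-level architecture --- a cell problem, affine rescaling by the Gaussian-curvature Jacobian, martingale concentration with an outside-in filtration and a monotone coupling, and a test-function/comparison argument with a union bound --- matches the paper's program. But your cell problem is the wrong object, and this is not a technicality: it is the central difficulty. You propose to count convex peels in a thin slab against a flat face of a box and to ``reduce the curved interface case to the flat case by a rescaling.'' No affine map carries a uniformly convex hypersurface to a hyperplane, and the flat case is genuinely degenerate for this equation: the normal velocity is $\kappa_G^{1/(d+1)}$, which vanishes on flat interfaces, so the number of peels per unit depth against a flat face does not define a positive constant $\alpha$; moreover tangential stationarity fails, since the layers near one face of a box are determined by the whole box. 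The correct cell problem is the peeling of a unit-intensity Poisson cloud inside the standard parabola $P = \{x_d > \tfrac12 |x^d|^2\}$, with $\alpha = \lim_{r\to\infty} r^{-1} h_{Y\cap P}(r e_d)$; affine maps fixing $e_d$ then cover all paraboloids and all intensities, which is where the $\kappa_G^{-1/(d+1)}$ and $f^{2/(d+1)}$ factors enter. To obtain the stationarity your subadditive/martingale argument needs, one applies the volume-preserving shear $\pi(x) = (x^d, x_d - \tfrac12|x^d|^2)$, which flattens $P$ onto a halfspace at the price of changing the peeling notion: convex peeling of $Y \cap P$ becomes ``semiconvex'' peeling by downward parabolic caps $y - P$ above $\{x_d = 0\}$, which is translation-invariant in $\R^{d-1}$ and, after periodization, has uniformly Lipschitz layers --- the regularity the martingale increment bounds require. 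Without this change of variables, or an equivalent device, the concentration step does not get off the ground. Relatedly, the mesoscale is the anisotropic parabolic one ($m^{-1/(d+1)}$ tangentially, $m^{-2/(d+1)}$ normally), not $m^{-1/d}$, and the exponent $m^{1/3(d+1)}$ in \eref{convergence} comes from running the cell-problem fluctuation bound on cylinders of height $\ep\, m^{1/(d+1)}$.

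A second, smaller gap is in the macroscopic step. The solution $h$ need not be smooth, so a perturbed-test-function argument at a single interior maximum of $\alpha^{-1}\phi - m^{-2/(d+1)} h_{X_m}$ gives only qualitative convergence. To get the stated probability bound you must sandwich $h$ between piecewise sub- and supersolutions built from finitely many test functions of the special form $\sigma \circ \varphi \circ a$ (so that each piece is exactly an affine image of the cell problem) and union-bound over the pieces. Two consequences: the number of pieces depends on $\ep$ and on the unknown regularity of $h$, which is why $\delta(\ep)$ is not quantified --- your ``$O((\log m)^C)$-sized mesh'' is not available; and the $(\log m)^{-2}$ prefactor is not the cost of that union bound but is inherited from the martingale fluctuation estimate for the periodized semiconvex peeling. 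Finally, lower test functions necessarily have flat spots where $Dh = 0$ (at interior maxima of $h$), where the operator degenerates; there the touching argument needs a separate appeal to the Poisson law to confine the contact set near the non-flat part of the test function.
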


Note that $\cof(A)$ denotes the cofactor matrix of $A$, which is the unique continuous map such that $\cof(A) = \det(A) A^{-1}$ when $A$ is invertible.  The inner product on the left-hand side of the PDE \eref{bvp} is the Gaussian curvature of the level sets of $h$ multiplied by the square norm of the gradient $|Dh|^2$ (see \sref{geom}).  The notion of viscosity solution is defined in \sref{viscosity}.  The notation $X \sim Poisson(f)$ indicates that $X$ is a random subset of $\R^d$ whose law is Poisson with density $f$.

\red We remark that \eref{convergence} gives a quantitative probabilistic estimate, but does not give a convergence rate for $m^{-2/(d+1)}h_{X_m}\to \alpha h$, since we are not able to quantify the dependence of $\delta$ on $\eps$. Indeed, the proof of \tref{main} approximates $h$ by simpler piecewise sub-~and supersolutions, and the number of pieces required depends in some way on the regularity of $h$, which may not be smooth. We refer to Problem \ref{p.fluc} for more dicussion. \nc

While we stated our main Theorem for a Poisson cloud, we can recover from \tref{main} the same result for a sequence of independent and identically distributed (\emph{i.i.d.})~random variables. 
\begin{corollary}
  \label{c.main}
Assume that $\int_U f\,dx = 1$. Let $Y_1,Y_2,Y_3,\cdots$ be a sequence of \emph{i.i.d.}~random variables with probability density $f$ and set 
 \[Z_m = \left\{ Y_1,Y_2,\dots,Y_m \right\}.\] 
For every $\ep > 0$, there is a $\delta > 0$ such that
  \begin{equation}
    \label{e.convergenceiid}
    \P[\sup_{\bar U} |m^{-2/(d+1)} h_{Z_m} - \alpha h| > \ep] \leq \exp( - \delta (\log m)^{-2} m^{1/3(d+1)} ),
  \end{equation}
  where $h \in C(\bar U)$ is the unique viscosity solution of \eref{bvp}. In particular, 
\begin{equation}
m^{-2/(d+1)} h_{Z_m} \longrightarrow \alpha h \ \ \text{uniformly and almost surely as } m\to \infty.
\label{e.as}
\end{equation}
\end{corollary}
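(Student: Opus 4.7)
My plan is to reduce to \tref{main} by a standard de-Poissonization argument. The crucial observation is that if $X_m \sim Poisson(mf)$ with $\int_U f\,dx = 1$, then the total count $|X_m|$ is Poisson with mean $m$, and conditionally on $|X_m| = n$ the points of $X_m$ are i.i.d.\ with density $f$. In particular, conditionally on $|X_m| = m$, the random set $X_m$ has the same law as $Z_m$.

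Next I would apply this to the event
\[
A_\eps := \left\{ \sup_{\bar U} |m^{-2/(d+1)} h_{X_m} - \alpha h| > \eps \right\},
\]
which is measurable with respect to the point configuration. Combining the conditioning identity with the Stirling bound $\P[|X_m| = m] = e^{-m} m^m / m! \geq c m^{-1/2}$ yields
\[
\P\!\left[ \sup_{\bar U} |m^{-2/(d+1)} h_{Z_m} - \alpha h| > \eps \right] = \P[A_\eps \mid |X_m| = m] \leq \frac{\P[A_\eps]}{\P[|X_m| = m]} \leq c^{-1} m^{1/2}\, \P[A_\eps].
\]
Applying \eref{convergence} and absorbing the polynomial prefactor $m^{1/2}$ into the stretched-exponential tail by shrinking $\delta$ slightly (valid since $(\log m)^{-2} m^{1/3(d+1)}$ eventually dominates $\log m$, with the small-$m$ regime handled by a further constant adjustment) then gives \eref{convergenceiid}.

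Finally, the almost sure convergence \eref{as} follows from Borel--Cantelli: for each fixed $\eps > 0$, the right-hand side of \eref{convergenceiid} is summable in $m$, so almost surely only finitely many of the exceptional events occur; uniform convergence is then obtained by intersecting the resulting null sets over a sequence $\eps_k \downarrow 0$. The only real content is \tref{main} itself, and the only small verification is that the $\sqrt{m}$ Stirling prefactor does not spoil the stretched-exponential tail bound --- which it plainly does not.
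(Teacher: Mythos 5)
Your argument is correct and coincides with the paper's own proof: both de-Poissonize by conditioning on $\# X_m = m$ (under which $X_m$ has the law of $Z_m$), bound the resulting prefactor $1/\P[\# X_m = m] = m!e^m/m^m$ by $C m^{1/2}$ via Stirling, and absorb this polynomial factor into the stretched-exponential bound of \tref{main}. The Borel--Cantelli deduction of the almost sure uniform convergence is likewise the intended (implicit) final step.
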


There are three natural problems worth mentioning.

\begin{problem}
  Determine the constant $\alpha$ in \tref{main}. When $d=2$ numerical simulations suggest $\alpha=4/3$.
\end{problem}

\begin{problem}\label{p.fluc}
  Determine the scaling limit of the fluctuations $m^{-2/(d+1)} h_{X_m} - \alpha h$ in \tref{main}.  Our proof of \tref{main} does not quantify the dependence of $\delta$ on $\ep$.  The regularity of the limiting $h$ should have an effect on this dependence.  Some results on the regularity of $h$ can be found in \mycite{Andrews} and \mycite{Brendle-Choi-Daskalopoulos}.  However, even in the case $h$ is smooth, we expect our bound is sub-optimal.
\end{problem}

\begin{problem}
  Prove uniqueness of solutions in the case $f \geq 0$.  Our proof of \tref{main} requires $f > 0$.  As we see from the non-convex example in \fref{peels}, the geometric interpretation as motion by a power of Gauss curvature becomes degenerate in the case $f = 0$.  Looking forward to \sref{viscosity}, the uniqueness of solutions depends upon our being able to perturb subsolutions to strict subsolutions.  When $f > 0$, this is easily achieved by homogeneity.  When $f$ is allowed to vanish, strictness must be obtained in a different way.  For example, one could add $\ep |x|^2$ to make the super level sets concave.  However, making such perturbations work in general appears to require curvature bounds for the level sets, which are currently unavailable in our setting.
\end{problem}

\subsection{Game Interpretation}

To formally derive the PDE \eref{bvp}, we observe that, for arbitrary $X \subseteq \R^d$, the height function $h_X$ satisfies the dynamic programming principle:
\begin{equation}
  \label{e.dpp}
\red  h_X(x) = \inf_{p \in \R^d \setminus \{ 0 \}} \sup_{p \cdot (y - x) > 0} [\id_X(y) + h_X(y)] \quad \mbox{for all } x \in \R^d.\nc
\end{equation}
As in \mycite{Kohn-Serfaty}, this leads to an interpretation of the convex height function as the value function of a two-player zero-sum game.

In the convex hull game, the players take turns defining a sequence of points $x_0, p_0, x_1, p_1, x_2, p_2, ... \in \R^d$.  The game starts at a point $x_0 \in \R^d$.  After $x_k$ is defined, player I chooses any $p_k \in \R^d$ satisfying $p_k \neq 0$.  After $p_k$ is defined, player II chooses any point $x_{k+1} \in \R^d$ satisfying $p_k \cdot (x_{k+1} - x_k) > 0$.  Players I and II seek to minimize and maximize, respectively, the final score $\sum_{k \geq 1} \id_X(x_k)$.  In particular, we see that player I seeks to, in the fewest possible moves, isolate play to a half-space that is disjoint from the set $X$.  Meanwhile, player II seeks to land on the set $X$ as often as possible.  \red An optimal choice for player $I$ is to choose $p_k$ so that the halfspace $\{x\, : \, p_k\cdot (x-x_k)>0\}$ is disjoint from $A:=\{x\, : \, h_{X}(x)\geq h_X(x_k)\}$. Since $A$ is convex, such as choice $p_k$ exists, and for this choice we have $h_{X}(x_{k+1})\leq h_X(x_k)-1$ for any feasible choice of $x_{k+1}\in X$ by player II. An optimal choice for player II is to choose $x_{k+1}\in X$ so that $p_k\cdot (x_{k-1}-x_k)>0$ and $h_{X}(x_{k+1})= h_X(x_k)-1$. Such a point $x_{k+1}\in X$ is guaranteed to exist by the definition of convex peeling. Thus, each step of the game moves exactly to the previous convex layer, and the convex height function $h_X(x_0)$ is precisely the final score under optimal play started at $x_0$.

To explain the limiting equation, let $X_m \sim Poisson(m f)$ for large $m > 0$.  Let us assume, even though it is discontinuous, that the rescaled height function $h = m^{-2/(d+1)} h_{X_m}$ is smooth, has uniformly convex level sets, and non-vanishing gradient.  As discussed above, the optimal choice for player I when $x_k=x$ is $p = -Dh(x)$, or any scalar multiple thereof.  Thus, the dynamic programming principle \eref{dpp} becomes
\[\sup_{Dh(x)\cdot (y - x) < 0} [m^{-2/(d+1)}\id_X(y) + h(y) - h(x)] =0. \]
Formally speaking, the dynamic programming principle implies that, on average, the set
\begin{equation*}
  \{ y \in \R^d : Dh(x) \cdot (y - x) < 0 \mbox{ and } h(y) \geq h(x) - m^{-2/(d+1)} \}
\end{equation*}
should have one point.  That is, its probability volume $\int_A f(y)\,dy$ should be proportional to $m^{-1}$.  Taylor expanding $h$ to compute the volume, we obtain
\begin{equation*}
  \langle Dh(x), \cof(-D h(x)) D h(x) \rangle \approx C f(x)^2,
\end{equation*}
for a constant of proportionality $C$. That is, $h$ should satisfy \eref{bvp}, up to the constant $C$, which is not determined by this heuristic argument.
\nc

\subsection{Geometric interpretation}
\label{s.geom}

We can give a precise geometric interpretation of \eref{bvp}. The Gaussian curvature of the level surfaces of $h$ is given  by \mycite{Giga}
\[\kappa_G = \frac{\langle Dh, \cof(-D^2 h)Dh\rangle}{|Dh|^{d+1}},\]
provided $h \in C^2$ and $Dh \neq 0$. Therefore we can formally rewrite \eref{bvp} as
\begin{equation}\label{e.altpde}
|Dh| \kappa_G^\frac{1}{d+1} = f^\frac{2}{d+1}.
\end{equation}
This equation has the property that the level sets $\{h=t\}$ move with a normal velocity given by
\begin{equation}\label{e.vel}
\nu = \kappa_G^\frac{1}{d+1}f^{-\frac{2}{d+1}}.
\end{equation}
To see why, consider nearby level sets $h=t$ and $h=t + \Delta t$. Let $\Delta x$ denote the normal distance between these level sets at some point $x \in \R^d$. Then $|D h(x)| \approx \Delta t/\Delta x$ and hence
\[\Delta x \approx \kappa_G^\frac{1}{d+1}f^{-\frac{2}{d+1}}\Delta t.\]
This implies that $h(x)$ is the arrival time of the boundary $\partial U$ as it evolves with a normal velocity given by \eref{vel}. When $f$ is constant, this geometric motion is known as affine invariant curvature motion, or the affine flow. \mycite{Cao} derived affine invariant curvature motion as the continuum limit of affine erosions. A similar geometric flow (motion by Gauss curvature) was derived by \mycite{Ishii-Mikami} for the wearing process of a non-convex stone.  

\subsection{Representation formulas for solutions}
\label{sec:exact}

 Assume that  $f$ is a radial function, that is  $f(x)=f(r)$ where  $r=|x|$. We look for a solution of \eqref{e.bvp} in the form  $h(x)=v(r)$ where  $v$ is a decreasing function. Using the alternative form \eqref{e.altpde} we see that 
\[v'(r)=-r^{\frac{d -1}{d +1}}f(r)^{\frac{2}{d +1}}.\]   
Integrating and using the boundary condition  $\lim_{r\to \infty}v(r)=0$ we have  
\[v(r)= -\int_r^\infty v'(s)\, ds=\int_r^\infty s^{\frac{d -1}{d +1}}f(s)^{\frac{2}{d +1}}\, ds.\]
Therefore we find that 
\begin{equation}\label{eq:exact}
h(x)=\int_{|x|}^\infty r^{\frac{d -1}{d +1}}f(r)^{\frac{2}{d +1}}\, dr.
\end{equation}
We give some applications of this formula below.
\begin{example}[Uniform distribution on a ball]
Suppose that  $f(x)=\frac{1}{|B_1|}$ for  $x\in B_1$ and  $f(x)=0$ otherwise,  where $B_1$ denotes the unit ball. Then we have 
\begin{equation}\label{eq:ball}
h(x)=\frac{d +1}{2d|B_1|^{\frac{2}{d +1}}}\left( 1 -|x|^{\frac{2d}{d +1}} \right).
\end{equation}
The (normalized) maximum convex depth in this case is 
\[\alpha h(0)=\frac{\alpha(d +1)}{2d|B_1|^{\frac{2}{d +1}}}.\]
\end{example}
\begin{example}[Standard normal distribution]
Suppose that  $f(x)=(2\pi )^{-d/2}e^{ -|x|^2/2}$. Then
\begin{equation}\label{eq:normal}
h(x)=\frac{1}{(2\pi )^{\frac{d}{d +1}}}\int_{|x|}^\infty r^{\frac{d -1}{d +1}}e^{ -\frac{r^2}{d +1}}\, dr.
\end{equation}
The maximum convex depth in this case is 
\[\alpha h(0)=\frac{\alpha }{2}\left( \frac{d +1}{2\pi } \right)^{\frac{d}{d +1}}\Gamma \left( \frac{d}{d +1} \right).\]
\end{example}

Due to the affine invariance of \eqref{e.bvp}, we can scale the solution formula \eqref{eq:exact} by any affine transformation. For example, suppose that 
\[f(x)=|A|f(|Ax +b|),\]
where  $A\in \R^{d \times d}$ is a non-singular matrix, $b\in \R^d$, and $|A|$ is the absolute value of the determinant of $A$.  Then we have 
\begin{equation}\label{eq:exact2}
h(x)=\int_{|Ax +b|}^\infty r^{\frac{d -1}{d +1}}f(r)^{\frac{2}{d +1}}\, dr.
\end{equation}
\begin{example}[Normal distribution]
Suppose that  
\[f(x)=|2\pi \Sigma  |^{ -\frac{1}{2}}\exp\left(  -\frac{1}{2}(x -\mu )\cdot \Sigma  ^{ -1}(x -\mu ) \right),\]
where  $\mu \in \R^d$ is the mean and $\Sigma  \in \R^{d \times d}$ is the covariance matrix.   
 Then
\begin{equation}\label{eq:normal2}
h(x)=\frac{1}{(2\pi )^{\frac{d}{d +1}}}\int_{|\Sigma  ^{ -\frac{1}{2}}(x -\mu )|}^\infty r^{\frac{d -1}{d +1}}e^{ -\frac{r^2}{d +1}}\, dr.
\end{equation}
\end{example}

\subsection{Distribution of points among layers}
\label{sec:pts}

We show here how Theorem \ref{t.main} can be used to deduce the distribution of points among the convex layers. Let 
\begin{equation}\label{eq:N}
N_m(i)=\# \left\{ X_{m}\cap K_i(X_{m})\setminus K_{i +1}(X_{m})  \right\},
\end{equation}
be the number of points on the  $i^{\rm th}$ convex layer for $X_m \sim Poisson(m f)$.\footnote{The discussion is equally valid for a sequence of $m$ \emph{i.i.d}~random variables with probability density $f$ as in \cref{main}.} Note that the  $i^{\rm th}$ convex layer is approximately the level set   $\{h_{X_{m}}=i\} $.   Since  $m^{ -\frac{2}{d +1}}h_{X_{m}}\to \alpha h$ as  $m\to \infty$ it is possible to show that for any  $0<a<b$  
\begin{equation}\label{eq:avg}
\lim_{m\to \infty}\frac{1}{m}\sum_{i=\lfloor am^{\frac{2}{d +1}}\rfloor}^{\lfloor bm^{\frac{2}{d +1}}\rfloor}N_m(i)=\int_{a\leq \alpha h\leq b}f\, dx\quad \text{ almost surely.}
\end{equation}
By the co-area formula and \eqref{e.altpde} we have 
\[\int_{a\leq \alpha h\leq b}f\, dx=\frac{1}{\alpha }\int_a^b\int_{ \left\{ \alpha h=r \right\}}\frac{f}{|D  h|}\, dS\, dr=\frac{1}{\alpha }\int_a^b\int_{ \left\{ \alpha h=r \right\}}f^{\frac{d -1}{d +1}}\kappa _G^{\frac{1}{d +1}}\, dS\, dr,\]
where  $\kappa _G$ denotes the Gaussian curvature of the level set  $\left\{ \alpha h=r \right\}$.   
It is tempting to set  $b -a=m^{ -\frac{2}{d +1}}$ to get  
\begin{equation}\label{eq:pts}
\lim_{m\to \infty}m^{ -\frac{d -1}{d +1}}N_m(\lfloor tm^{\frac{2}{d +1}}\rfloor)=\frac{1}{\alpha }\int_{\{\alpha h=t\}}f^{\frac{d -1}{d +1}}\kappa _G^{\frac{1}{d +1}}\, dS\quad \text{ almost surely}.
\end{equation}
This does not follow directly from Theorem \ref{t.main} and would require a far more careful analysis of the continuum limit. We leave such an analysis to future work, and proceed with discussing applications. For convenience, let us set 
\begin{equation}\label{eq:Ninf}
N(t)=\frac{1}{\alpha }\int_{\{\alpha h=t\}}f^{\frac{d -1}{d +1}}\kappa _G^{\frac{1}{d +1}}\, dS.
\end{equation}

Note that if  $f(x)=f(r)$ is radial, then  $h(x)=h(r)$ and  $\kappa _G=r^{ -(d -1)}$ for $r=h^{-1}(\alpha^{ -1}t)$. Therefore
\begin{equation}\label{eq:radial}
N(t)=\frac{d|B_1|}{\alpha }f(r)^{\frac{d -1}{d +1}}r^{\frac{d(d -1)}{d +1}},\quad \text{  where }r=h^{ -1}(\alpha ^{ -1}t).
\end{equation}
\begin{figure}
\centering
\subfigure[Uniform distribution on a ball]{\includegraphics[trim=30 20 28 20, clip= true, width=0.45\textwidth]{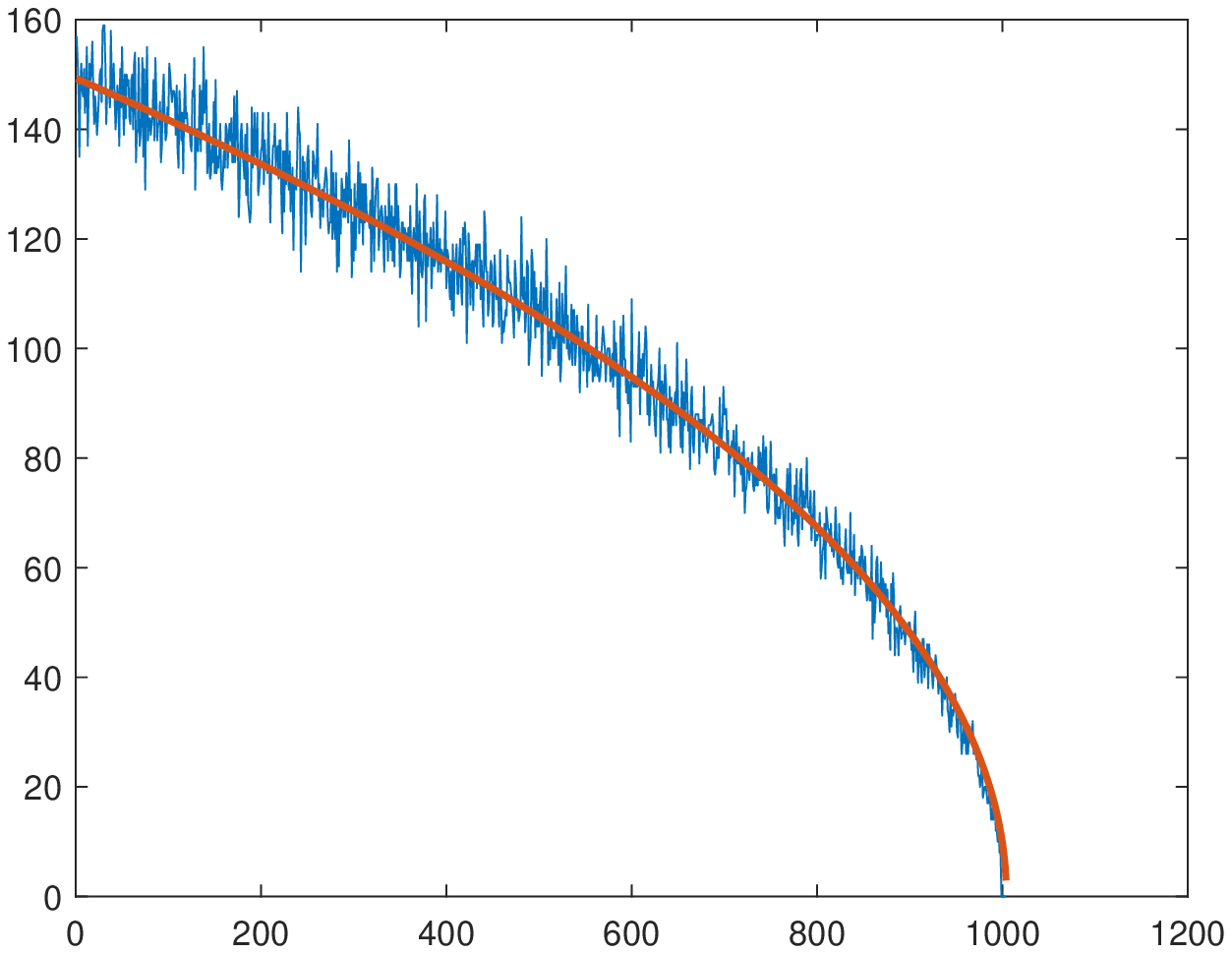}\label{fig:circ}}
\subfigure[Standard normal distribution]{\includegraphics[trim=30 20 28 20, clip= true, width=0.45\textwidth]{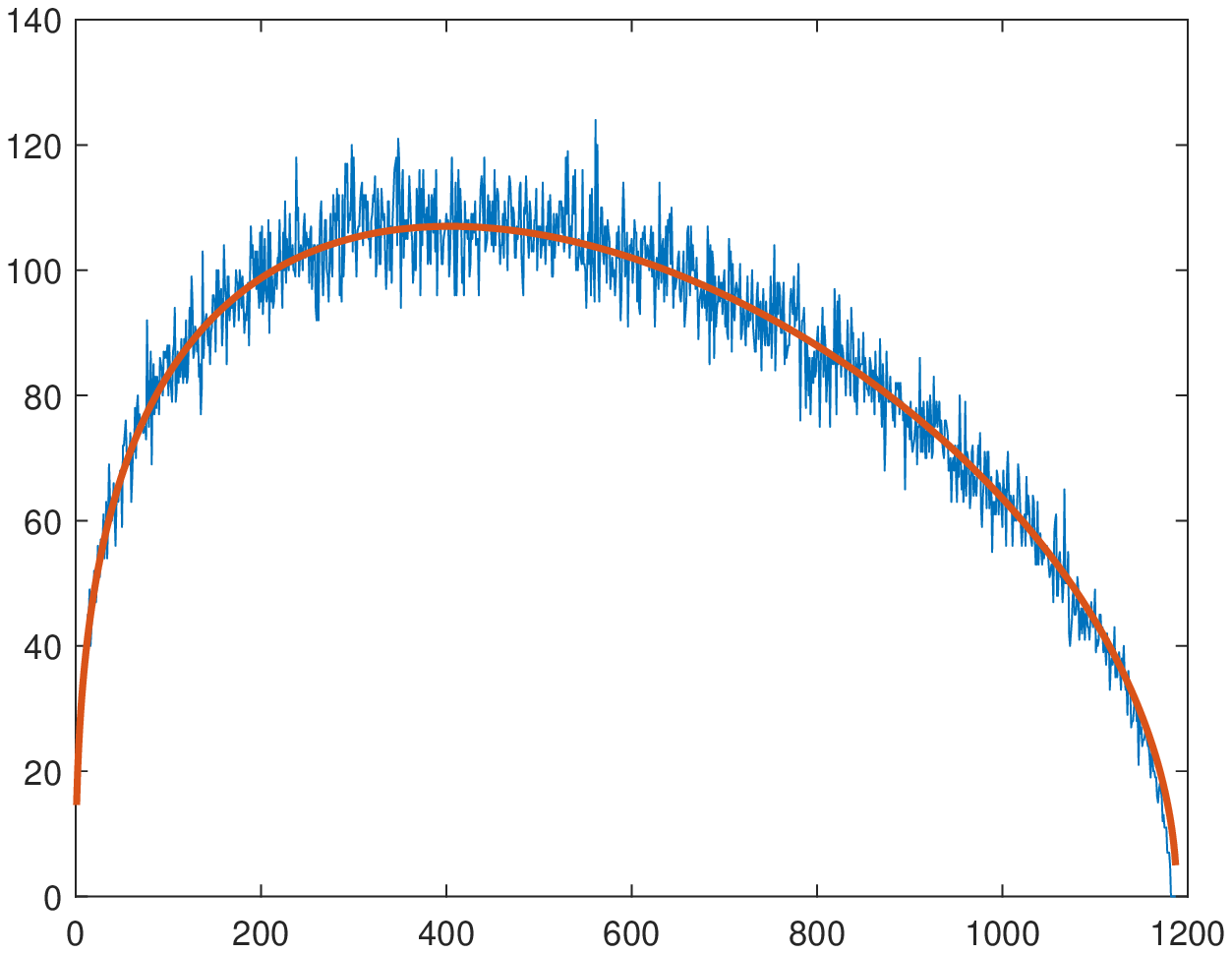}\label{fig:gauss}}
\caption{Comparison of the distribution of points among convex layers with the continuum limit \eqref{eq:pts}. In each figure the vertical axis is the number of points and the horizontal axis is the convex layer index.}
\end{figure}

\begin{example}[Uniform distribution revisited]
For a uniform distribution on the unit ball we have 
\begin{equation}\label{eq:Nball}
N(t)=\frac{d|B_1|^{\frac{2}{d+1}}}{\alpha }\left( 1 -ct \right)^{\frac{d -1}{2}},\quad \text{  where }c=\frac{2d|B_1|^{\frac{2}{d +1}}}{\alpha (d +1)}.
\end{equation}
Figure \ref{fig:circ} shows a simulation comparing $N(t)$ to the distribution of points among convex layers for $n=10^5$ \emph{i.i.d.}~random variables uniformly distributed on the unit ball. Another simulation averaged over 100 trials and shown in Figure \ref{fig:bl} suggests there is a boundary layer phenomenon. The first convex layer has significantly more points than nearby subsequent layers. 
\end{example}

\begin{example}[Normal distribution revisited]
For the standard normal distribution we have 
\begin{equation}\label{eq:Nnormal}
N(t)=\frac{d|B_1|}{\alpha }\left( \frac{r}{\sqrt{2\pi }} \right)^{\frac{d(d -1)}{d +1}}\exp\left( -\frac{r^2(d -1)}{2(d +1)} \right),
\end{equation}
where $r=r(t)$ satisfies $\alpha h(r) = t$, or 
\[ t=\frac{\alpha}{(2\pi )^{\frac{d}{d +1}}}\int_{r}^\infty s^{\frac{d -1}{d +1}}e^{ -\frac{s^2}{d +1}}\, ds.\]
Figure \ref{fig:gauss} shows a simulation comparing $N(t)$ to the distribution of points among convex layers for $n=10^5$ \emph{i.i.d.}~normally distributed random variables.
\end{example}

\begin{figure}
\centering
\subfigure{\includegraphics[trim=30 20 28 20, clip= true, width=0.45\textwidth]{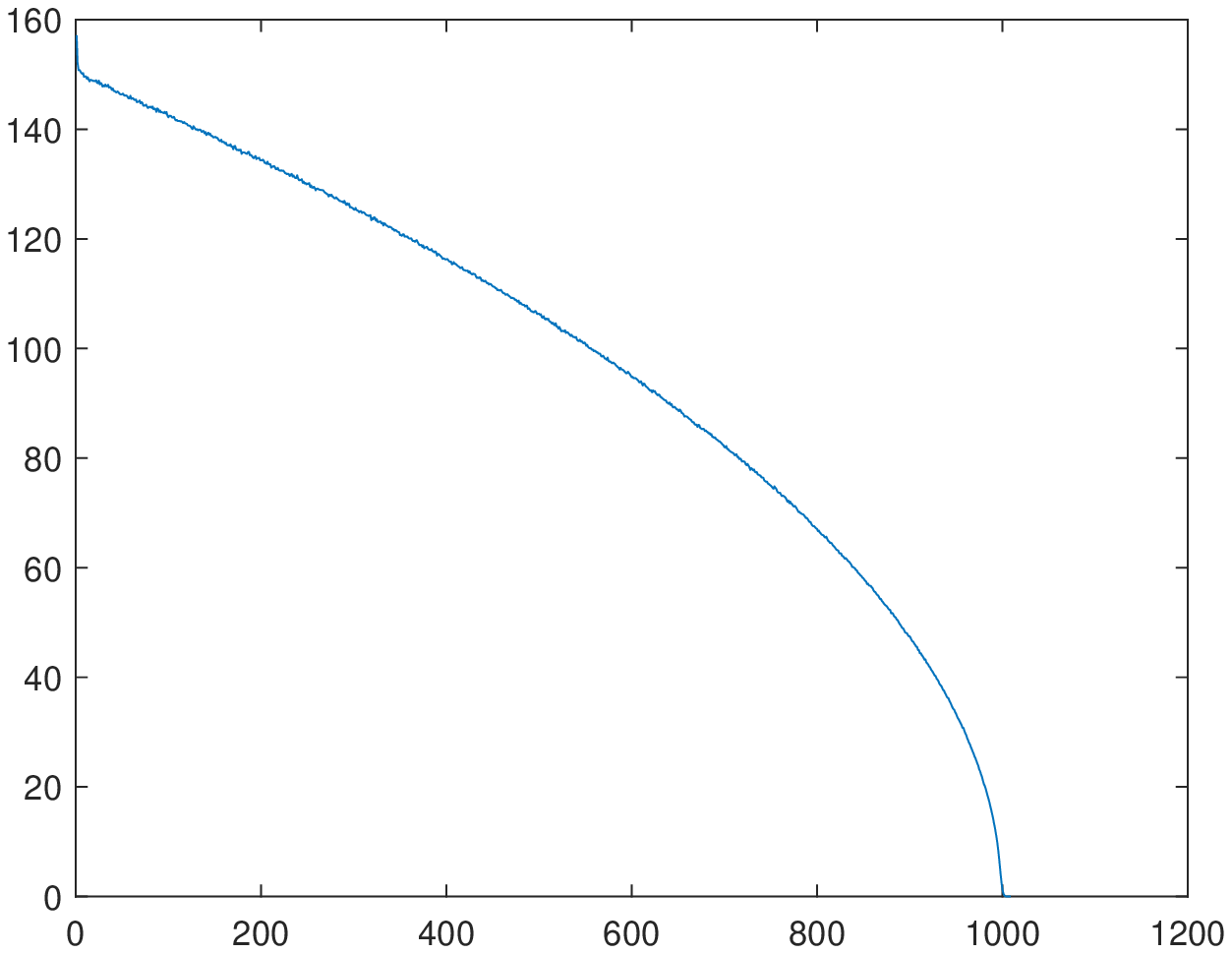}}
\subfigure{\includegraphics[trim=30 20 28 20, clip= true, width=0.45\textwidth]{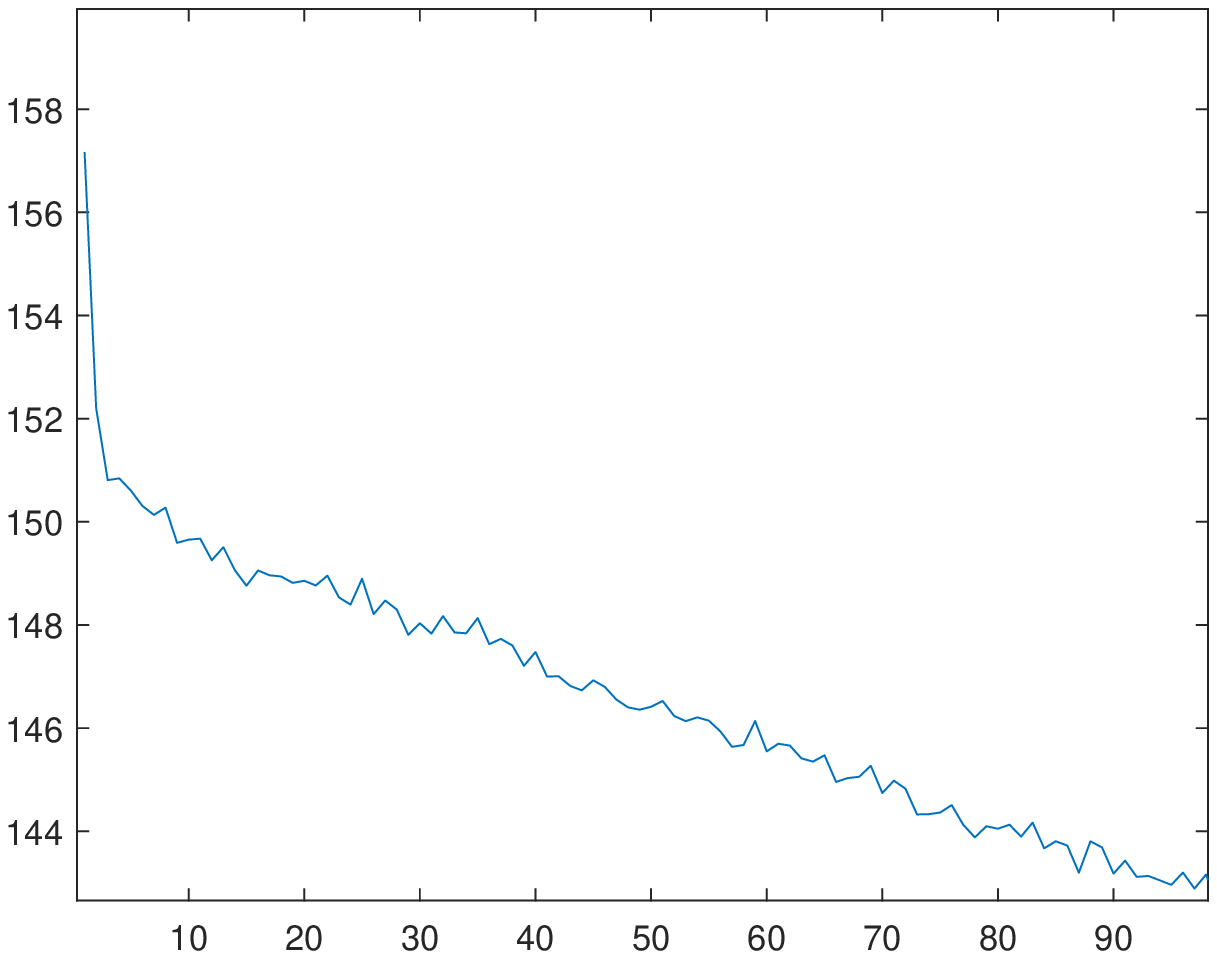}}
\caption{Simulation showing evidence of a boundary layer near the first convex layer. The figure on the right is zoomed in to show the sharp decrease in the number of points between the first and subsequent convex layers.}
\label{fig:bl}
\end{figure}

\red
\subsection{Overview of proof and the cell-problem}
\label{s.overview}

The proof of \tref{main} involves solving a \emph{cell problem}, which in homogenization theory refers to a family of simpler problems whose solutions describe the local behavior of the random function of interest. 
When looking for a cell problem for convex hull peeling, we seek a simpler convex peeling problem that has convenient symmetry and invariance properties, and can locally approximate a general convex peeling problem. 

Let $Y_m \sim Poisson(m)$ and define the standard parabola 
\begin{equation*}\label{e.parabola}
P = \{x\in \R^d \, : \, x_d > \tfrac12 |x^d|^2\},
\end{equation*} 
where
\begin{equation*}\label{e.xd}
x^d = (x_1,\dots,x_{d-1}).
\end{equation*}
The cell problem for convex peeling is the convex peeling of the set $Y_1\cap P$. In particular, we show that almost surely
\begin{equation}\label{e.cellproblem}
\lim_{r\to \infty} \frac{1}{r}h_{Y_1\cap P}(re_d) = \alpha,
\end{equation}
and we establish a convergence rate of $O(\sqrt{r})$ up to logarithmic factors. To see why the function $h_{Y_1\cap P}(re_d)$ should have linear growth in $r$, we note that the set $P \cap \{x_d<r\}$ contains on average $n=O(r^{(d+1)/2})$ points from $Y_1$, and so by \tref{dalal} we expect $O(n^{2/(d+1)})=O(r)$ convex layers in this region.

We call $h_{Y_1\cap P}$ the cell problem for convex peeling due to a family of symmetries that are inherited from convex peeling, which allow us to essentially use \eref{cellproblem} to prove our main result. Indeed, we first note that convex peeling is invariant to affine transformations, that is, we have
\begin{equation}\label{e.affine0}
h_{Y_m\cap P} = h_{a(Y_m\cap P)}\circ a
\end{equation}
for any nonsingular affine transformation $a$.  The first important consequence of this affine symmetry is the \emph{scale property}
\begin{equation}\label{e.affine1}
h_{ Y_m\cap P}(re_d)\sim h_{Y_{1}\cap P }(m^{2/(d+1)}re_d).
\end{equation}
To see this, we use \eref{affine0} with the affine transformation $a_m$ defined by 
\[a_m x = (m^{1(d+1)} x^d,m^{2/(d+1)}x_d),\]
noting that $a_mP=P$ and $a_mY_m\sim Y_1$.  Second, if $a$ is any affine transformation on $\R^{d}$ satisfying $ae_d=e_d$, then applying again  \eqref{e.affine0} we have
\begin{equation}\label{e.affine2}
h_{ Y_m\cap aP}(re_d)\sim h_{Y_{sm}\cap P }(re_d),
\end{equation}
where $s=|\det(Da)|>0$. Combining the two affine symmetries \eref{affine1} and \eref{affine2} with the cell problem \eref{cellproblem} we have
\begin{equation}\label{e.affine3}
h_{ Y_m\cap aP}(re_d)\approx \alpha(sm)^{2/(d+1)}r,
\end{equation}
provided $r \gg m^{-2/(d+1)}$. Thus, the solution of the cell problem \eref{cellproblem}, coupled with convenient affine invariances, allows us to solve a whole family of cell problems $h_{Y_m\cap aP}$ for Poisson clouds with arbitrary intensity $m$, and general parabolas $aP$. It is worth noting that that Gaussian curvature $\kappa_G$ of the parabla $aP$ at the origin  is given by $\kappa_G=s^{-2}$, and so $s^{2/(d+1)}= \kappa_G^{-1/(d+1)}$.

To solve the cell problem, i.e., prove \eref{cellproblem}, we first make the observation that the peeling of the parabola $Y_1\cap P$ has a spatial homogeneity that is not initially evident. Indeed, let $H=\{x\in \R^d \, : \, x_d>0\}$, define $\pi:P\to H$ by
\[\pi(x) = (x_1,x_2,\dots,x_{d-1},x_d-\tfrac12 |x^d|^2)\]
and set $s = h_{X_1\cap P}\circ \pi^{-1}$. Since $\pi$ is not affine, the function $s$ is not the depth function for a convex peeling. However, we can interpret $s$ as the depth function for another type of peeling that we call \emph{semiconvex peeling}. While the points removed in each layer of convex peeling are those that can be touched by half-spaces, the points removed by  semiconvex peeling are exactly those touched by downward facing parabolas $x_0 - P$, i.e., the images of halfspaces under the bijection $\pi$. See \fref{semiconvexpeeling} for an illustration of semiconvex peeling. Since $\pi(P)=H$ and $\pi(Y_1)\sim Poisson(1)$, the depth function $s$ describes the semiconvex peeling of a unit intensity Poisson point cloud above a halfspace, and we can immediately see that the process is distributionally invariant with respect to translations in $\R^{d-1}$. These additional symmetries of semiconvex peeling allow for a Martingale proof of convergence, which is given in \sref{semiconvex}.

The solution of the cell problem locally describes the convex depth function and can be used to derive the limiting PDE \eref{bvp}. Indeed, assume that $m^{-2/(d+1)}h_{X_m}$ is uniformly close to a smooth function $h$. Let $x_0\in U$ and assume, after making an orthogonal transformation if necessary, that $D h(x_0) = | Dh(x_0)|e_d$. Let $a$ be an affine transformation with $ae_d=e_d$ for which 
\[x_0 + aP \approx \{x\in B(x_0,r)\, : \, h(x)>h(x_0)\}.\]
In particular, defining $s:=|\det(Da)|$ we have $s^{2/(d+1)}= \kappa_G^{-1/(d+1)}$, where $\kappa_G$ is the Gaussian curvature of the level set of $h$ at $x_0$. Making a localization approximation $X_m \sim Y_{f(x_0)m}$ for small $r>0$ we have
\begin{align*}
|D h(x_0)|  &\approx  \frac{1}{r}(h_{x_0}+re_d) - h(x_0))\\
&\approx \frac{1}{rm^{2/(d+1)}}(h_{X_m}(x_0+re_d) - h_{X_m}(x_0))\\
&\approx \frac{1}{rm^{2/(d+1)}}h_{Y_{f(x_0)m}\cap aP}(re_d)\\
&\approx \alpha f(x_0)^{2/(d+1)}\kappa_G^{-1/(d+1)},
\end{align*}
where we used the solution of the cell problem \eref{affine3} in the last line. This can be compared with the geometric form of the continuum PDE given in \eref{altpde}.  We note that the arguments above are merely formal, and are meant to give some of the main ideas that motivated \tref{main} and our proof techniques. We make these arguments rigorous in the remainder of the paper. 
\nc

\subsection{Outline}

In \sref{semiconvex}, we study a related peeling process called semiconvex peeling.  This process has some additional symmetries that allow for a Martingale proof of convergence.  In \sref{viscosity}, we discuss the solution theory of the limiting PDE.  This is essentially standard, except for a folklore theorem on piece-wise smooth approximation of viscosity solutions.  In \sref{convex}, we use the convergence of semiconvex peeling to control local regions of the convex peeling and prove our main result.  This requires some delicate geometric arguments to translate between our two notions of peeling.

\subsection{Acknowledgments}

The first author was partially supported by NSF-DMS grant 1500829.  The second author was partially supported by the National Science Foundation and the Alfred P Sloan Foundation.

\section{Semiconvex Peeling}
\label{s.semiconvex}

\subsection{Definitions}

In this section we study an a priori different peeling problem that we call semiconvex peeling.  In \sref{convex}, we will see that this is the ``cell problem'' for convex peeling.  That is, it is the problem obtained by blow-up of the limiting convex peeling problem.  For now, we simply study a different problem.

Consider the parabolic region
\begin{equation*}
  P = \{ x \in \R^d : x_d > \tfrac12 |x^d|^2 \},
\end{equation*}
where
\begin{equation*}
  x^d = (x_1, ..., x_{d-1}).
\end{equation*}
Consider also the half space
\begin{equation*}
  H = \{ x \in \R^d : x_d > 0 \}.
\end{equation*}
We call a set $S \subseteq \overline H$ semiconvex if its complement is a union of sets of the form $H \cap (x - P)$ for $x \in H$.  Note that this implies $S$ is closed.  This definition is analogous to the complement of a convex set being a union of open half spaces.  The semiconvex hull of a set $X \subseteq H$ is defined to be
\begin{equation*}
  \semi(X) = H \setminus \bigcup \{ x - P : x \in H \mbox{ and } (x - P) \cap X = \emptyset \}.
\end{equation*}
The semiconvex peeling of a set $X \subseteq H$ is defined by
\begin{equation*}
  S_1(X) = \semi(X) \quad \mbox{and} \quad S_{n+1}(X) = \semi(X \cap \interior(S_n(X))).
\end{equation*}
The semiconvex height function of a set $X \subseteq H$ is defined to be
\begin{equation*}
  s_X = \sum_{n \geq 1} \id_{\interior(S_n(X))}.
\end{equation*}
Note that $s_X$ takes values in $\N \cup \{ \infty \}$ a priori.  Of course, when $X \subseteq H$ is locally finite, $s_X$ is everywhere finite.  See \fref{semiconvexpeeling} for an example.

Throughout the paper, $C$ and $c$ denote positive constants that may vary in each instance, but depend only on dimension. We always assume $C>1$ and $0 < c < 1$.
\begin{figure}
  \label{f.semiconvexpeeling}
  \input{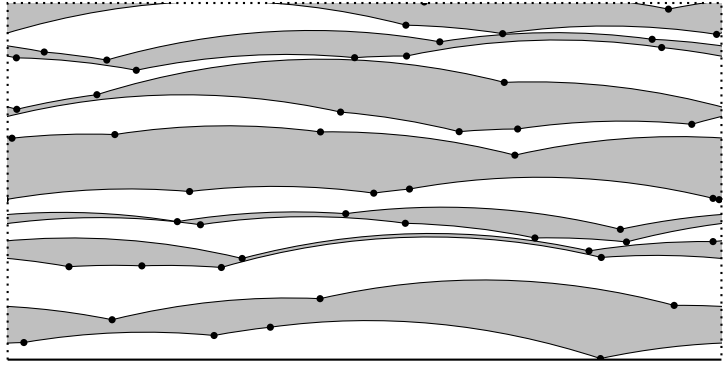}
  \caption{The semiconvex peeling of a $\Z^{d-1} \times \{ 0 \}$-periodic Poisson cloud $X \subseteq H$.  The shading indicates alternating semiconvex layers.}
\end{figure}

\subsection{Monotonicity}

Semiconvex peeling is monotone in the following sense.

\begin{lemma}
  \label{l.monotone}
  If $X \subseteq Y \subseteq H$, then $s_X \leq s_Y$.  More generally, if $\tilde S_n \subseteq H$ is a sequence of semiconvex sets that satisfy $\tilde S_{n+1} \subseteq \tilde S_n$ and $X \subseteq \bigcup_{n \geq 1} \partial \tilde S_n \cup \bigcap_{n \geq 1} \tilde S_n$, then $s_X \leq \sum_{n \geq 1} \id_{\tilde S_n}$.
\end{lemma}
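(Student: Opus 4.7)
The plan is to prove the more general second statement and derive the first from it. The essential ingredient is a minimality property of the semiconvex hull that follows immediately from the definition: if $T \subseteq H$ is any semiconvex set containing $X$, then $\semi(X) \subseteq T$. Indeed, $T^c$ is a union of sets of the form $x - P$ each disjoint from $T$ and hence from $X$, so every such set is among those excluded when forming $\semi(X)$, giving $T^c \subseteq \semi(X)^c$.

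With that in hand, given a decreasing sequence $\tilde S_n$ of semiconvex sets with $X \subseteq \bigcup_n \partial \tilde S_n \cup \bigcap_n \tilde S_n$, I would show by induction on $n$ that $S_n(X) \subseteq \tilde S_n$. For $n=1$, every point of $X$ lies in some $\tilde S_k$ (using $\partial \tilde S_k \subseteq \tilde S_k$ because $\tilde S_k$ is closed), hence in $\tilde S_1$ by the decreasing property; since $\tilde S_1$ is semiconvex, minimality yields $S_1(X)=\semi(X) \subseteq \tilde S_1$. For the inductive step, assume $S_n(X) \subseteq \tilde S_n$; by minimality applied to $\tilde S_{n+1}$, it suffices to check that $X \cap \interior(S_n(X)) \subseteq \tilde S_{n+1}$. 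Fix $x$ in this intersection. If $x \in \bigcap_k \tilde S_k$ there is nothing to do, so suppose $x \in \partial \tilde S_k$ for some $k$. The key point is that $k \geq n+1$: if $k \leq n$ then the inductive hypothesis together with $\tilde S_n \subseteq \tilde S_k$ would place $x$ in $\interior(S_n(X)) \subseteq \interior(\tilde S_n) \subseteq \interior(\tilde S_k)$, contradicting $x \in \partial \tilde S_k$. Hence $x \in \tilde S_k \subseteq \tilde S_{n+1}$. Summing the inequalities $\id_{\interior(S_n(X))} \leq \id_{\tilde S_n}$ over $n$ yields $s_X \leq \sum_n \id_{\tilde S_n}$.

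To recover the first statement from the second, I take $\tilde S_n := S_n(Y)$. This is a decreasing sequence of semiconvex sets, and I need to check that $X \subseteq Y \subseteq \bigcup_n \partial S_n(Y) \cup \bigcap_n S_n(Y)$. For $y \in Y$, either $y \in S_n(Y)$ for every $n$, or there is a largest $n$ with $y \in S_n(Y)$; in the latter case $y \notin \interior(S_n(Y))$ (otherwise $y \in Y \cap \interior(S_n(Y)) \subseteq \semi(Y \cap \interior(S_n(Y))) = S_{n+1}(Y)$, contradicting maximality), so $y \in \partial S_n(Y)$.

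I do not anticipate serious obstacles here; the only subtlety is correctly handling the distinction between $S_n(X)$ and $\interior(S_n(X))$ and exploiting that semiconvex sets are closed so that their boundaries belong to them. Once minimality of $\semi$ is recorded, the induction is clean and the bookkeeping between interiors and closures is the only thing to verify carefully.
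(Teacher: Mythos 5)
Your proof is correct and follows essentially the same route as the paper's (very terse) argument: the identity $\semi(X)=\bigcap\{K \text{ semiconvex}: X\subseteq K\subseteq H\}$, induction on $n$ to get $S_n(X)\subseteq\tilde S_n$, and then specializing to $\tilde S_n=S_n(Y)$. One small refinement for the last step: applying the second statement literally only yields $s_X\leq\sum_n \id_{S_n(Y)}$, which can exceed $s_Y=\sum_n \id_{\interior(S_n(Y))}$ at boundary points, so to conclude $s_X\leq s_Y$ you should instead quote the containment $S_n(X)\subseteq S_n(Y)$ that your induction already establishes, which gives $\interior(S_n(X))\subseteq\interior(S_n(Y))$ and hence the first statement directly.
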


\begin{proof}
  The second statement follows from the identity
  \begin{equation*}
    \semi(X) = \cap \{ K \mbox{ semiconvex} : X \subseteq K \subseteq H \}
  \end{equation*}
  and induction on $n$.  The first statement follows from the second.
\end{proof}

\subsection{Tail Bounds}

Like the convex height function, the semiconvex height function has a dynamic programming principle. \red  

\begin{proposition}\label{p.semidpp}
For all $x\in H$ we have
\begin{equation*}
  s_X(x) = \inf_{y \in x + \partial P} \sup_{z \in X \cap (y - P)} (\id_X(z) + s_X(z)),
\end{equation*}
where the empty supremum is interpreted as $0$.
\end{proposition}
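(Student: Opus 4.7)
Plan: Write $V(x)$ for the right-hand side and $n = s_X(x)$. I will establish $V(x) = n$ by proving $V \geq n$ and $V \leq n$ separately. Let $X_k := X \cap \interior(S_k(X))$, so $z \in X_k$ iff $z \in X$ and $s_X(z) \geq k$.

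For $V(x) \geq n$, assume $n \geq 1$ (the case $n = 0$ is trivial since $\sup \geq 0$) and argue by contradiction: if some $y \in x + \partial P$ gives $\sup_{z \in X \cap (y - P)}(\id_X(z) + s_X(z)) \leq n - 1$, then every $z \in X \cap (y - P)$ satisfies $1 + s_X(z) \leq n - 1$, so $s_X(z) \leq n - 2$, giving $z \notin X_{n-1}$ and hence $(y - P) \cap X_{n-1} = \emptyset$. Since $y_d = x_d + \tfrac12|y^d - x^d|^2 \geq x_d > 0$ gives $y \in H$, the definition of $S_n(X) = \semi(X_{n-1})$ forces $(y - P) \cap S_n(X) = \emptyset$. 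But $x \in \interior(S_n(X))$ and $x \in \partial(y - P)$ imply that any small ball $B(x, \eps) \subseteq S_n(X)$ has nonempty intersection with the open set $y - P$, yielding points in both $S_n(X)$ and its complement, a contradiction.

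For $V(x) \leq n$, it suffices to exhibit $y \in x + \partial P$ with $(y - P) \cap X_n = \emptyset$, since then every $z \in X \cap (y - P)$ has $s_X(z) \leq n - 1$ and the supremum is at most $n$. Because $x \notin \interior(S_{n+1}(X)) = \interior(\semi(X_n))$, the definition of the semiconvex hull furnishes sequences $w_k \in H$ and $x_k \in w_k - P$ with $x_k \to x$ and $(w_k - P) \cap X_n = \emptyset$. Using local finiteness of $X_n$ together with compactness, I pass to a subsequential limit $w \in H$ with $x \in \overline{w - P}$ (via continuity of the defining function $(w, u) \mapsto w_d - u_d - \tfrac12|w^d - u^d|^2$ applied at $(w_k, x_k)$) and $(w - P) \cap X_n = \emptyset$ (any $z$ in this intersection would lie in the open set $w_k - P$ for $k$ large, contradicting $(w_k - P) \cap X_n = \emptyset$). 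If $x \in \partial(w - P)$, take $y = w$; otherwise $x \in w - P$ strictly, and the downward shift $y = w - t e_d$ with $t = w_d - x_d - \tfrac12|w^d - x^d|^2 > 0$ gives $y \in x + \partial P$ with $y - P \subsetneq w - P$, preserving the disjointness.

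The principal obstacle is the limit-extraction step, specifically ruling out an unbounded sequence $|w_k| \to \infty$. In that regime the parabolas $w_k - P$ asymptotically flatten near $x$ toward a half-space, and a separate argument is required: local finiteness of $X_n$ then forces sufficient sparsity of $X_n$ near $x$ that one can instead construct a tangent parabola $y \in x + \partial P$ with large off-center vertex $y^d = x^d - \rho v$ (for the limiting unit direction $v$ and $\rho$ sufficiently large) avoiding $X_n$. A cleaner route, which I would favor in the write-up, is to first establish the intermediate characterization that $x \in \interior(S_{n+1}(X))$ if and only if every $y \in (x + \overline P) \cap H$ satisfies $(y - P) \cap X_n \neq \emptyset$, and then deduce the proposition by contraposition combined with the shift argument above.
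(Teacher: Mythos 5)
Your argument follows the same route as the paper's proof: your lower bound for the inf-sup is the contrapositive of the paper's second paragraph, and your upper bound hinges, exactly as in the paper, on producing a single $y\in x+\partial P$ with $(y-P)\cap X_n=\emptyset$ from the fact that $x\notin\interior(\semi(X_n))$. The paper simply asserts the existence of such a $\tilde y$ in one sentence; you are right that this is the only nontrivial point, and right that the obstruction is a sequence of witnessing paraboloids $w_k-P$ whose vertices escape to infinity, so that their local limit at $x$ is a vertical half-space rather than a paraboloid.

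Neither of your proposed repairs closes this gap, however, because the claim you would need is false for a general locally finite $X$. Take $d=2$, $X=X_0=\{(0,\tfrac12)\}$ and $x=(0,1)$. Every $y=(\rho,1+\tfrac12\rho^2)\in x+\partial P$ contains $(0,\tfrac12)$ in $y-P$ (the inequality reads $\tfrac12+\tfrac12\rho^2>\tfrac12\rho^2$), so no tangent paraboloid at $x$, however far off-center its vertex, avoids $X$; yet $x\notin\interior(\semi(X))$, because the points $(\eps,1)\to x$ lie in $w_\eps-P$ with $w_\eps=(\eps^{-1},\tfrac12+\tfrac12\eps^{-2})$, while $(0,\tfrac12)$ lies only on $\partial(w_\eps-P)$ and hence not in the open set $w_\eps-P$. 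This defeats your ``if and only if'' characterization, your large-$\rho$ off-center construction, and indeed the proposition itself as literally stated: here $s_X(x)=0$ while the inf-sup equals $1$. The culprit is a point $z\in X\cap(x-P)$ with $z^d=x^d$; more generally your off-center construction succeeds exactly when no point of $X_n\cap(x-\overline P)$ lies on the hyperplane through $x$ orthogonal (in the $x^d$-variables) to the limiting direction $v$ of the escaping vertices. For the Poisson clouds and the points $x$ at which the dynamic programming principle is actually invoked later in the paper, this degeneracy occurs with probability zero, but a complete write-up must either impose such a genericity hypothesis or rework the statement. Since the paper's own proof silently asserts the existence of $\tilde y$ at precisely this step, you have put your finger on a gap shared by both arguments rather than introduced a new one.
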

\begin{proof}
Let $X_n = X\cap \interior(S_n(X))$ so that $S_{n+1}(X) = \semi(X_n)$, and set $X_0=X$. Note that $X_n\supset X_{n+1}$ and $s_X(x)= n$ if $x \in X_n\setminus X_{n+1}$ for all $n\geq 0$. Let $x\in H$ and set $n=s_X(x)$. Thus, $x\not\in \interior(S_{n+1}(X))$ and so there exists $\tilde{y}\in H$ such that $(\tilde{y}-P)\cap X_{n}=\emptyset$ and $x\in \overline{\tilde{y}-P}$. Let $t\geq 0$ such that $y:=\tilde{y}-te_d$ satisfies $x\in y - \partial P$, which is equivalent to $y\in x + \partial P$. Since $\tilde{y}-P \supset y-P$ we have $(y-P)\cap X_n=\emptyset$, and so $s_X(z)\leq n-1$ for all $z\in y-P$. Therefore
\[\inf_{y\in x+\partial P}\sup_{z\in X\cap(y-P)}(\id_X(z) + s_X(z)) \leq n=s_X(x).\]

If $n=0$, then the other inequality is trivial, so we may assume $n\geq 1$. Let $y\in x +\partial P$. Then we have $x \in \overline{y-P}$. If $(y-P)\cap X_{n-1}=\emptyset$, then we would have that $x\not\in \interior(S_{n}(X))$, which is a contradiction since $s_X(x)=n$. Therefore, there exists $z\in (y-P)\cap X_{n-1}$ and so 
\[\sup_{z\in X\cap(y-P)}(\id_X(z) + s_X(z)) \geq 1 + n-1 = n = s_X(x).\]
Since $y\in x+\partial P$ was arbitrary, the proof is complete.
\end{proof}
\nc
The dynamic programming principle given in \pref{semidpp} has the natural interpretation as a two-player zero-sum game.  We prove upper and lower tail bounds by constructing strategies in this game.  In both cases, we construct trees of disjoint regions in $H$, and trade the exponential tree growth against exponential bounds for the Poisson process.  Our upper bound strategy adapts an argument from \mycite{Dalal}.  Our lower bound strategy is new.

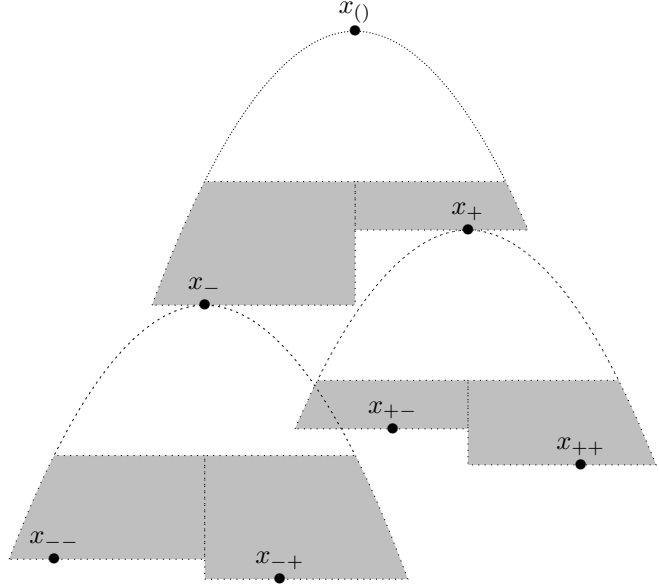
\begin{figure}[h]
  \begin{tikzpicture}
\draw[dotted,fill=lightgray] (-2.700000,-3.645000) parabola bend (0.000000,-0.000000) (2.300000,-2.645000) -- (0.000000,-2.645000) -- (0.000000,-2.000000) -- (2.000000,-2.000000) parabola bend (0.000000,-0.000000) (-2.000000,-2.000000) -- (0.000000,-2.000000) -- (0.000000,-3.645000) -- (-2.700000,-3.645000);
\draw[dotted,fill=lightgray] (-0.800000,-5.290000) parabola bend (1.500000,-2.645000) (4.000000,-5.770000) -- (1.500000,-5.770000) -- (1.500000,-4.645000) -- (3.500000,-4.645000) parabola bend (1.500000,-2.645000) (-0.500000,-4.645000) -- (1.500000,-4.645000) -- (1.500000,-5.290000) -- (-0.800000,-5.290000);
\draw[dotted,fill=lightgray] (-4.600000,-7.025000) parabola bend (-2.000000,-3.645000) (0.700000,-7.290000) -- (-2.000000,-7.290000) -- (-2.000000,-5.645000) -- (0.000000,-5.645000) parabola bend (-2.000000,-3.645000) (-4.000000,-5.645000) -- (-2.000000,-5.645000) -- (-2.000000,-7.025000) -- (-4.600000,-7.025000);
00) -- (-4.600000,-7.025000);
\draw (0.000000,-0.000000) node {\textbullet};
\draw (0.000000,0.250000) node {$x_{()}$};
\draw (1.500000,-2.645000) node {\textbullet};
\draw (1.500000,-2.395000) node {$x_{+}$};
\draw (-2.000000,-3.645000) node {\textbullet};
\draw (-2.000000,-3.395000) node {$x_{-}$};
\draw (3.000000,-5.770000) node {\textbullet};
\draw (3.000000,-5.520000) node {$x_{++}$};
\draw (0.500000,-5.290000) node {\textbullet};
\draw (0.500000,-5.040000) node {$x_{+-}$};
\draw (-1.000000,-7.290000) node {\textbullet};
\draw (-1.000000,-7.040000) node {$x_{-+}$};
\draw (-4.000000,-7.025000) node {\textbullet};
\draw (-4.000000,-6.775000) node {$x_{--}$};
\end{tikzpicture}
  \caption{A tree of points for the lower bound strategy.}
  \label{f.lower}
\end{figure}

For later applications, we need these bounds to be localized.  For $r > 0$, we define the cylinder
\begin{equation*}
  Q_r = \{ x \in \R^d : x_d \in (0,r) \mbox{ and } x_1^2 + \cdots + x_{d-1}^2 < r^2 \}
\end{equation*}
and its upper boundary
\begin{equation*}
  \partial^+ Q_r = \{ x \in \R^d : x_d = r \mbox{ and } x_1^2 + \cdots + x_{d-1}^2 < r^2 \}.
\end{equation*}

\begin{lemma}
  \label{l.lower}
  If $r \geq 1$ and $X \sim Poisson(\id_H)$, then
  \begin{equation*}
    \P[s_{X \cap Q_r}(r e_d) \leq c r] \leq e^{-c r}.
  \end{equation*}
\end{lemma}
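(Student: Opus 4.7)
The plan is to use the game interpretation of Proposition \ref{p.semidpp} and exhibit, with probability at least $1 - e^{-cr}$, an explicit strategy for the maximizing player sustaining the game for $\lfloor cr\rfloor$ rounds starting from $re_d$. The construction is indicated in Figure \ref{f.lower}.

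First I would prove a binary branching property. Rewriting $y \in x + \partial P$ as $y_d = x_d + \tfrac12|y^d - x^d|^2$, a direct expansion gives the identity
\[
z \in y - P \;\iff\; y^d \cdot (z^d - x^d) > \bigl(z_d + \tfrac12|z^d|^2\bigr) - \bigl(x_d + \tfrac12|x^d|^2\bigr).
\]
Applied to the two candidate children $x_\pm := x + (\pm \rho e_1, -h)$, the conditions $x_\pm \in y - P$ become linear inequalities in $y_1$, and they cover the whole real line (jointly in $y_1$, uniformly in the remaining coordinates of $y^d$) precisely when $h > \rho^2/2$; in that case every $y \in x + \partial P$ has $x_+ \in y - P$ or $x_- \in y - P$.

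Iterating this branching rule yields a binary tree of nominal positions $\{x_\sigma\}_{|\sigma| \leq N}$ rooted near $re_d$ with depth $N := \lfloor cr \rfloor$, together with pairwise disjoint parabolic slabs $R_\sigma \subset Q_r$ defined by the portion of $(x_\sigma - P)$ not covered by the children's parabolas (the shaded regions in Figure \ref{f.lower}). On the event $E := \{X \cap R_\sigma \neq \emptyset$ for every node $\sigma\}$, I would prove $s_{X \cap Q_r}(re_d) \geq N$ by induction on depth using Proposition \ref{p.semidpp}: at each step the max player holds a Poisson point $z_\sigma \in R_\sigma$, and the branching property (with a small slack to accommodate the actual position of $z_\sigma$ inside its slab) guarantees that, for every $y \in z_\sigma + \partial P$ chosen by the min player, some child slab $R_{\sigma i}$ lies in $y - P$ and supplies the next move from $X$.

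Finally, by independence of Poisson counts across disjoint sets,
\[
\P[E^c] \;\leq\; \sum_\sigma \P[X \cap R_\sigma = \emptyset] \;=\; \sum_\sigma e^{-|R_\sigma|},
\]
which is to be arranged at most $e^{-cr}$ by suitable choice of parameters. The main obstacle lies in choosing the branching scales $(\rho_k, h_k)$ as a function of the level $k$: a complete binary tree has $2^N \approx 2^{cr}$ leaves, which cannot all be placed in $Q_r$ with slabs of volume $\Omega(r)$. A feasible design should therefore share slabs between sibling paths (a lattice-type tree) so that the number of distinct events is polynomial in $r$, with each slab volume of order $r$; the polynomial prefactors in the union bound are then absorbed into the exponential by adjusting $c$.
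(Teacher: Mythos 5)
Your overall strategy (a tree of witnesses for the maximizer in the game of \pref{semidpp}, plus a union bound) is the paper's, and your exact computation is correct: for the nominal children $x_\pm = x + (\pm\rho e_1,-h)$ the two conditions $x_\pm\in y-P$ are half-lines in $y_1$ that cover $\R$ precisely when $h>\rho^2/2$. But the step where you pass from nominal positions to actual Poisson points ``with a small slack'' fails in dimension $d\geq 3$, and this is the heart of the matter. Once $z_{\sigma\pm}$ is an arbitrary point of a slab rather than exactly $x\pm\rho e_1-he_d$, the displacement $z_{\sigma\pm}^d-z_\sigma^d$ acquires components in $e_2,\dots,e_{d-1}$, and your own identity shows that the condition $z_{\sigma\pm}\in y-P$ reads $y^d\cdot(z_{\sigma\pm}^d-z_\sigma^d)>c_\pm$. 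The good sets are now two open half-spaces of $\R^{d-1}$ whose normals are only approximately antiparallel; two such half-spaces never cover $\R^{d-1}$ when $d-1\geq 2$, so the minimizer can choose $y\in z_\sigma+\partial P$ with $|y^d|$ huge in the uncovered wedge and defeat both children. No choice of ``small slack'' repairs this, because $y^d$ is unbounded. This is exactly why the paper branches over all $2^{d-1}$ orthants: choosing the child whose displacement has every coordinate of sign opposite to $(y^d-z_\sigma^d)_k$ makes the inner product $\leq 0$ \emph{regardless of the magnitude of} $y^d$, while the right-hand side $w_d-\tfrac12|w^d|^2$ is positive since the displacement lies in $P$. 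Your binary scheme is fine for $d=2$ only.

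The second gap is the one you flag yourself: the tree design versus the union bound. Your proposed resolution (deterministic shared slabs of volume $\Omega(r)$, polynomially many events) is not carried out, and sharing slabs between sibling paths creates exactly the robustness problem above (one point must serve as a valid child of many different parents). The paper resolves the tension differently and you should compare: it does not use slabs of fixed large volume at all. Each child is found by a greedy exploration --- the first point of the cloud in the growing region $x_u-P_{v,t}$ as $t$ increases --- so the random ``waiting height'' $t_{uv}$ has a uniform exponential moment bound $\E[e^{t_{uv}}]\leq e^C$, the heights along a single root-to-leaf path are independent because the explored regions are disjoint, and hence a fixed path descends below $\partial H$ within $n$ steps with probability at most $e^{Cn-r}$. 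This is small enough to beat the $2^{(d-1)n}$ entropy of the full tree by a crude union bound once $n=\lceil cr\rceil$ with $c$ small; no volume-$\Omega(r)$ slabs, no sharing, and the localization $x_u\in Q_r$ comes for free from the geometry of the regions on the good event. I recommend you keep your tree-and-DPP skeleton but adopt both of these devices: orthant branching for the covering step, and waiting-time exponential moments for the probability estimate.
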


\begin{proof}
  Step 1.  We may assume that $X = Y \cap H$, where $Y \sim Poisson(\id_{\R^d})$.  We build a tree by exploring the Poisson cloud $Y$ downward from $r e_d$.  We define a tree of points $x_u \in Y$ indexed by words $u$ in the alphabet $A = \{ -,+\}^{d-1}$.  Begin by setting $x_{()} = r e_d$.  For $v \in A$ and $t > 0$, we define the region
  \begin{equation*}
    P_{v,t} = \{ x \in P : 2 < x_d \leq 2 + t \mbox{ and } v_k x_k > 0 \mbox{ for } k = 1, ..., d-1 \}.
  \end{equation*}
  If $u \in A^k$, $v \in A$, and $x_u \in Y$ is already defined, we choose $t_{uv} > 0$ and $x_{uv} \in Y \cap (x_u - P_{v,t_{uv}})$ with $t_{uv} > 0$ as small as possible.  Using the Poisson law, we see that $x_{uv}$ and $t_{uv}$ exist almost surely.

  Our tree is chosen to that it provides a strategy for the maximizer in the semiconvex hull game.  Observe that, if $y \in x_u + \partial P$, then there is a $v \in A$ such that $x_{uv} \in y - P$.  Thus, if $x_{uv} \in H$ for all $v \in A$, the dynamic programming principle implies
  \begin{equation*}
    s_X(x_u) \geq 1 + \min_{v \in A} s_X(x_{uv}).
  \end{equation*}
  Thus, by induction, we see that
  \begin{equation*}
    s_X(r e_d) \geq \max \{ n : x_u \in H \mbox{ for all } u \in A^n \}.
  \end{equation*}
  Next, observe that the set $P_{v,t}$ was chosen in such a way that $x_{uv} \in x_u - Q_{2 + t_{uv}}$.  That is, if $x_u \in H$, then $x_u \in Q_r$.  This gives the localization
  \begin{equation*}
    s_{X \cap Q_r}(r e_d) \geq \max \{ n : x_u \in H \mbox{ for all } u \in A^n \}.
  \end{equation*}
  It remains to control the right-hand side.

  Step 2.
  Fix a word $u = v_1 \cdots v_n \in A^n$ and consider its initial segments $u_k = v_1 \cdots v_k$.  Observe that the sets
  \begin{equation*}
    P_k = x_{u_{k-1}} - P_{v_k,t_{u_k}}
  \end{equation*}
  are disjoint.  Thus, by the Poisson law, the sequence of random heights $t_{u_k}$ are independent.  Since the sections have volume satisfying
  \begin{equation*}
    |P_{v,t}| \geq c t,
  \end{equation*}
  the Poisson law of $Y$ gives
  \begin{equation*}
    \E[e^{t_{u_k}}] \leq e^C
  \end{equation*}
  and, by independence,
  \begin{equation*}
    \E[e^{(x_u)_d - r}] = \E[e^{t_{u_1} + \cdots + t_{u_n}}] = \prod_{k=1}^n \E[e^{t_{u_k}}] \leq e^{Cn}.
  \end{equation*}
  Applying Chebyshev's inequality and summing over words $u \in A^n$, it follows that
  \begin{equation*}
    \P\left[\max_{u \in A^n} (r - (x_u)_d) \geq r \right] \leq e^{C n - r}.
  \end{equation*}
  Combining this with the previous step, we see that
  \begin{equation*}
    \P[s_{X \cap Q_r}(r e_d) <  n] \leq \P[\max_{u \in A^n} (r - (x_u)_d) \geq r] \leq e^{C n - r}.
  \end{equation*}
  Setting $n = \lceil c r \rceil$ yields the lemma.
\end{proof}

\begin{figure}[h]
  \begin{tikzpicture}[scale=0.07\textwidth,x=1,y=1]
\begin{scope}
\clip (-5,0) rectangle (5,-5);
\draw[lightgray] (-7.500000,-6.125000) parabola bend (-6.000000,-5.000000) (-4.500000,-6.125000);
\draw[lightgray] (-7.500000,-5.125000) parabola bend (-6.000000,-4.000000) (-4.500000,-5.125000);
\draw[lightgray] (-7.500000,-4.125000) parabola bend (-6.000000,-3.000000) (-4.500000,-4.125000);
\draw[lightgray] (-7.500000,-3.125000) parabola bend (-6.000000,-2.000000) (-4.500000,-3.125000);
\draw[lightgray] (-7.500000,-2.125000) parabola bend (-6.000000,-1.000000) (-4.500000,-2.125000);
\draw[lightgray] (-7.500000,-1.125000) parabola bend (-6.000000,0.000000) (-4.500000,-1.125000);
\draw[lightgray] (-6.500000,-6.125000) parabola bend (-5.000000,-5.000000) (-3.500000,-6.125000);
\draw[lightgray] (-6.500000,-5.125000) parabola bend (-5.000000,-4.000000) (-3.500000,-5.125000);
\draw[lightgray] (-6.500000,-4.125000) parabola bend (-5.000000,-3.000000) (-3.500000,-4.125000);
\draw[lightgray] (-6.500000,-3.125000) parabola bend (-5.000000,-2.000000) (-3.500000,-3.125000);
\draw[lightgray] (-6.500000,-2.125000) parabola bend (-5.000000,-1.000000) (-3.500000,-2.125000);
\draw[lightgray] (-6.500000,-1.125000) parabola bend (-5.000000,0.000000) (-3.500000,-1.125000);
\draw[black] (-5.500000,-6.125000) parabola bend (-4.000000,-5.000000) (-2.500000,-6.125000);
\draw[lightgray] (-5.500000,-5.125000) parabola bend (-4.000000,-4.000000) (-2.500000,-5.125000);
\draw[lightgray] (-5.500000,-4.125000) parabola bend (-4.000000,-3.000000) (-2.500000,-4.125000);
\draw[lightgray] (-5.500000,-3.125000) parabola bend (-4.000000,-2.000000) (-2.500000,-3.125000);
\draw[lightgray] (-5.500000,-2.125000) parabola bend (-4.000000,-1.000000) (-2.500000,-2.125000);
\draw[lightgray] (-5.500000,-1.125000) parabola bend (-4.000000,0.000000) (-2.500000,-1.125000);
\draw[black] (-4.500000,-6.125000) parabola bend (-3.000000,-5.000000) (-1.500000,-6.125000);
\draw[black] (-4.500000,-5.125000) parabola bend (-3.000000,-4.000000) (-1.500000,-5.125000);
\draw[lightgray] (-4.500000,-4.125000) parabola bend (-3.000000,-3.000000) (-1.500000,-4.125000);
\draw[lightgray] (-4.500000,-3.125000) parabola bend (-3.000000,-2.000000) (-1.500000,-3.125000);
\draw[lightgray] (-4.500000,-2.125000) parabola bend (-3.000000,-1.000000) (-1.500000,-2.125000);
\draw[lightgray] (-4.500000,-1.125000) parabola bend (-3.000000,0.000000) (-1.500000,-1.125000);
\draw[black] (-3.500000,-6.125000) parabola bend (-2.000000,-5.000000) (-0.500000,-6.125000);
\draw[black] (-3.500000,-5.125000) parabola bend (-2.000000,-4.000000) (-0.500000,-5.125000);
\draw[black] (-3.500000,-4.125000) parabola bend (-2.000000,-3.000000) (-0.500000,-4.125000);
\draw[lightgray] (-3.500000,-3.125000) parabola bend (-2.000000,-2.000000) (-0.500000,-3.125000);
\draw[lightgray] (-3.500000,-2.125000) parabola bend (-2.000000,-1.000000) (-0.500000,-2.125000);
\draw[lightgray] (-3.500000,-1.125000) parabola bend (-2.000000,0.000000) (-0.500000,-1.125000);
\draw[black] (-2.500000,-6.125000) parabola bend (-1.000000,-5.000000) (0.500000,-6.125000);
\draw[black] (-2.500000,-5.125000) parabola bend (-1.000000,-4.000000) (0.500000,-5.125000);
\draw[black] (-2.500000,-4.125000) parabola bend (-1.000000,-3.000000) (0.500000,-4.125000);
\draw[black] (-2.500000,-3.125000) parabola bend (-1.000000,-2.000000) (0.500000,-3.125000);
\draw[lightgray] (-2.500000,-2.125000) parabola bend (-1.000000,-1.000000) (0.500000,-2.125000);
\draw[lightgray] (-2.500000,-1.125000) parabola bend (-1.000000,0.000000) (0.500000,-1.125000);
\draw[black] (-1.500000,-6.125000) parabola bend (0.000000,-5.000000) (1.500000,-6.125000);
\draw[black] (-1.500000,-5.125000) parabola bend (0.000000,-4.000000) (1.500000,-5.125000);
\draw[black] (-1.500000,-4.125000) parabola bend (0.000000,-3.000000) (1.500000,-4.125000);
\draw[black] (-1.500000,-3.125000) parabola bend (0.000000,-2.000000) (1.500000,-3.125000);
\draw[black] (-1.500000,-2.125000) parabola bend (0.000000,-1.000000) (1.500000,-2.125000);
\draw[lightgray] (-1.500000,-1.125000) parabola bend (0.000000,0.000000) (1.500000,-1.125000);
\draw[black] (-0.500000,-6.125000) parabola bend (1.000000,-5.000000) (2.500000,-6.125000);
\draw[black] (-0.500000,-5.125000) parabola bend (1.000000,-4.000000) (2.500000,-5.125000);
\draw[black] (-0.500000,-4.125000) parabola bend (1.000000,-3.000000) (2.500000,-4.125000);
\draw[black] (-0.500000,-3.125000) parabola bend (1.000000,-2.000000) (2.500000,-3.125000);
\draw[lightgray] (-0.500000,-2.125000) parabola bend (1.000000,-1.000000) (2.500000,-2.125000);
\draw[lightgray] (-0.500000,-1.125000) parabola bend (1.000000,0.000000) (2.500000,-1.125000);
\draw[black] (0.500000,-6.125000) parabola bend (2.000000,-5.000000) (3.500000,-6.125000);
\draw[black] (0.500000,-5.125000) parabola bend (2.000000,-4.000000) (3.500000,-5.125000);
\draw[black] (0.500000,-4.125000) parabola bend (2.000000,-3.000000) (3.500000,-4.125000);
\draw[lightgray] (0.500000,-3.125000) parabola bend (2.000000,-2.000000) (3.500000,-3.125000);
\draw[lightgray] (0.500000,-2.125000) parabola bend (2.000000,-1.000000) (3.500000,-2.125000);
\draw[lightgray] (0.500000,-1.125000) parabola bend (2.000000,0.000000) (3.500000,-1.125000);
\draw[black] (1.500000,-6.125000) parabola bend (3.000000,-5.000000) (4.500000,-6.125000);
\draw[black] (1.500000,-5.125000) parabola bend (3.000000,-4.000000) (4.500000,-5.125000);
\draw[lightgray] (1.500000,-4.125000) parabola bend (3.000000,-3.000000) (4.500000,-4.125000);
\draw[lightgray] (1.500000,-3.125000) parabola bend (3.000000,-2.000000) (4.500000,-3.125000);
\draw[lightgray] (1.500000,-2.125000) parabola bend (3.000000,-1.000000) (4.500000,-2.125000);
\draw[lightgray] (1.500000,-1.125000) parabola bend (3.000000,0.000000) (4.500000,-1.125000);
\draw[black] (2.500000,-6.125000) parabola bend (4.000000,-5.000000) (5.500000,-6.125000);
\draw[lightgray] (2.500000,-5.125000) parabola bend (4.000000,-4.000000) (5.500000,-5.125000);
\draw[lightgray] (2.500000,-4.125000) parabola bend (4.000000,-3.000000) (5.500000,-4.125000);
\draw[lightgray] (2.500000,-3.125000) parabola bend (4.000000,-2.000000) (5.500000,-3.125000);
\draw[lightgray] (2.500000,-2.125000) parabola bend (4.000000,-1.000000) (5.500000,-2.125000);
\draw[lightgray] (2.500000,-1.125000) parabola bend (4.000000,0.000000) (5.500000,-1.125000);
\draw[lightgray] (3.500000,-6.125000) parabola bend (5.000000,-5.000000) (6.500000,-6.125000);
\draw[lightgray] (3.500000,-5.125000) parabola bend (5.000000,-4.000000) (6.500000,-5.125000);
\draw[lightgray] (3.500000,-4.125000) parabola bend (5.000000,-3.000000) (6.500000,-4.125000);
\draw[lightgray] (3.500000,-3.125000) parabola bend (5.000000,-2.000000) (6.500000,-3.125000);
\draw[lightgray] (3.500000,-2.125000) parabola bend (5.000000,-1.000000) (6.500000,-2.125000);
\draw[lightgray] (3.500000,-1.125000) parabola bend (5.000000,0.000000) (6.500000,-1.125000);
\draw[lightgray] (4.500000,-6.125000) parabola bend (6.000000,-5.000000) (7.500000,-6.125000);
\draw[lightgray] (4.500000,-5.125000) parabola bend (6.000000,-4.000000) (7.500000,-5.125000);
\draw[lightgray] (4.500000,-4.125000) parabola bend (6.000000,-3.000000) (7.500000,-4.125000);
\draw[lightgray] (4.500000,-3.125000) parabola bend (6.000000,-2.000000) (7.500000,-3.125000);
\draw[lightgray] (4.500000,-2.125000) parabola bend (6.000000,-1.000000) (7.500000,-2.125000);
\draw[lightgray] (4.500000,-1.125000) parabola bend (6.000000,0.000000) (7.500000,-1.125000);
\end{scope}
\draw (-5,-5) -- (5,-5);
\draw (0,-1) node {\textbullet};
\draw (0,-0.75) node {$x$};
\draw (-1,-2) node {\textbullet};
\draw (0,-2) node {\textbullet};
\draw (1,-2) node {\textbullet};
\draw (0,-1.75) node {$S(x)$};
\draw (0,-5.25) node {$\partial H$};
\end{tikzpicture}
  \caption{A tree of parabolic caps from the upper bound strategy.}
  \label{f.upper}
\end{figure}
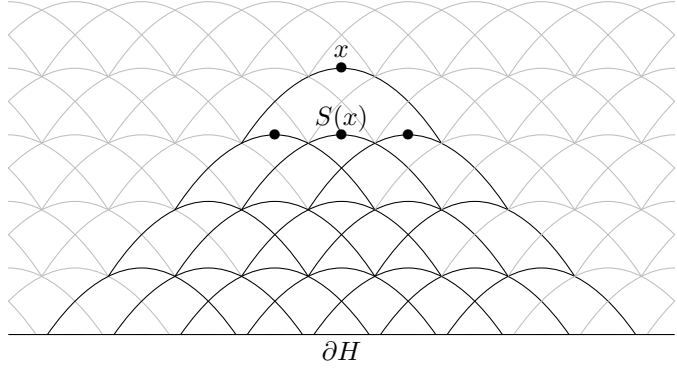

To prove an upper bound, we employ the only canonical strategy for the minimizer: choosing $y = x$.  To estimate the performance of this strategy, we build a tree of parabolic caps.  See \fref{upper} for a picture in dimension $d = 2$.  This is a straightforward adaptation of a lemma from \mycite{Dalal}.

\begin{lemma}
  \label{l.upper}
  If $r, t \geq 1$ and $X \sim Poisson(\id_H)$, then
  \begin{equation*}
    \P[s_{X \cup (H \setminus Q_r)}(r e_d) \geq C r t] \leq e^{- r t}.
  \end{equation*}
\end{lemma}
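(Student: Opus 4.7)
The plan is to apply the dynamic programming principle of Proposition \ref{p.semidpp} with the canonical minimizer strategy $y = x$ at every node. Under this choice, the game value satisfies
\[ s_{X \cup (H \setminus Q_r)}(x) \leq 1 + \max_{z \in (X \cup (H \setminus Q_r)) \cap (x-P)} s_{X \cup (H \setminus Q_r)}(z), \]
so that $s_{X \cup (H \setminus Q_r)}(re_d)$ is bounded above by the longest descending chain $re_d = x_0 \succ x_1 \succ \cdots \succ x_N$ in the parabolic partial order $z \prec x \iff z \in x - P$, with each $x_k \in X \cup (H \setminus Q_r)$. The task is to control $N$ in terms of a Poisson tail parameter $t$.

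Next, I would tile the cylinder $Q_r$ by a family of unit parabolic caps $\{C_\alpha\}$, say $C_\alpha = (z_\alpha - P) \cap \{z_{\alpha,d} - 1 < x_d \leq z_{\alpha,d}\}$ with centers $z_\alpha$ on a grid whose spacing is $O(1)$ horizontally and equal to $1$ vertically. The tiling uses at most $C r^d$ caps, each of volume of order one, so by the Poisson tail $\P[|X \cap C_\alpha| \geq Ct] \leq e^{-ct}$; a union bound gives an event $E$ of probability at least $1 - Cr^d e^{-ct}$ on which every $C_\alpha$ contains at most $Ct$ points of $X$. For $t$ bounded below, $C r^d e^{-ct}$ is absorbed into $e^{-rt}$ after adjusting constants.

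The geometric core is then a deterministic claim: on $E$, any descending chain rooted at $re_d$ and remaining in $Q_r$ visits at most $Cr$ caps of the tiling, hence picks up at most $Cr \cdot Ct = Crt$ points. The intuition is that the $d$-coordinate of the chain is monotonically decreasing and $Q_r$ has only $r$ vertical unit layers; one must still show that within each layer, the chain passes through $O(1)$ caps, which follows from the parabolic containment $x_{k+1} \in x_k - P$. Steps that exit into $H \setminus Q_r$ do not inflate the chain meaningfully because from any such point the downward paraboloid reaches $\partial H$ within a bounded vertical descent, contributing only $O(r)$ extra. Combining yields $\P[s_{X \cup (H \setminus Q_r)}(re_d) \geq Crt] \leq e^{-rt}$.

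The main obstacle is precisely the deterministic cap-counting step. A maximizer move can descend many levels in a single jump, so one cannot directly charge chain length to vertical progress; the chain might enter a tall cap near the top of $Q_r$, immediately jump to a cap adjacent to $\partial H$, then wander horizontally among many caps at low height. The likely resolutions are either (i) a hierarchical/dyadic tiling whose scales interact cleanly with the parabolic order so that chain steps at each scale touch $O(1)$ caps, or (ii) a charging argument that pairs each chain step with the cap containing its endpoint and exploits the per-cap point bound $Ct$ to conclude that chains of length $\gg rt$ would force some cap to contain more than $Ct$ chain-endpoints, contradicting $E$. Either way, making the geometry quantitative is the delicate part; the Poisson concentration and the game-theoretic reduction are comparatively routine.
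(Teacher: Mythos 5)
Your proposal has genuine gaps at each of its three stages, and they compound. First, the reduction to ``longest descending chain'' is vacuous for the point set $X \cup (H \setminus Q_r)$: iterating the DPP with $y = x$ gives $s_Y(x) \leq N + \sup s_Y(z_N)$ along a worst-case chain, and this only yields $s_Y(x) \leq N$ if the chain terminates. Since $H \setminus Q_r$ is a continuum and each step need only decrease $x_d$ by $\tfrac12|\Delta x^d|^2$, which can be made arbitrarily small, a chain that leaks into $H \setminus Q_r$ never has to terminate; your remark that such excursions ``contribute only $O(r)$ extra'' is false because the vertical descent per step has no positive lower bound. Second, the deterministic cap-counting claim fails for the same reason even inside $Q_r$: a chain can spend $\sim r^{d-1}$ caps wandering laterally within a single unit vertical layer by taking short steps, so ``at most $Cr$ caps'' is not forced by the parabolic order alone. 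You correctly identify this as the main obstacle, but neither of your proposed fixes is carried out, and (ii) in particular cannot work as stated since chain points need not be numerous in any one cap. Third, and independently fatal: the union bound over $Cr^d$ caps with per-cap tail $e^{-ct}$ gives failure probability $Cr^d e^{-ct}$, which is \emph{not} absorbed into $e^{-rt}$ --- for fixed $t$ and large $r$ it exceeds $1$. The exponent $rt$ can only come from a single Poisson variable of mean $\sim r$ exceeding $\sim rt$, never from a maximum over order-one caps.

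The paper's proof (adapted from Dalal) is engineered precisely to avoid these traps. The recursion $s_X(x) \leq \#(X \cap \Omega_x) + \max_{y \in S(x)} s_X(y)$ replaces arbitrary chains of \emph{points} by paths in a fixed lattice tree: each step drops exactly $2$ in the $e_d$ direction, so every path terminates at $\partial H$ after $x_d/2$ steps (no termination issue, no horizontal wandering), and the branching $\# T(x) \leq e^{C x_d}$ is controlled. Crucially, a point of $X$ contributes to $s_X(x)$ only through the cap $\Omega_{\mathbf y_k}$ containing it, so along any one path the relevant random variable is $\#(X \cap \bigcup_k \Omega_{\mathbf y_k})$, a single Poisson variable of mean $\leq C x_d \sim Cr$; its tail at level $Crt$ is $e^{-crt}$, which beats the $e^{Cr}$ path entropy. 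If you want to salvage your approach you would need to reproduce exactly these two features --- a terminating tree of bounded branching, and a volume-$O(r)$ region per branch --- at which point you have rederived the paper's argument.
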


\begin{proof}
  Step 1.
  For $x \in \Z^d$, let
  \begin{equation*}
    S(x) = \{ y \in \Z^d : y_d = x_d - 2 \mbox{ and } |y^d - x^d|_\infty \leq 2 \}
  \end{equation*}
  and observe that the set
  \begin{equation*}
    \Omega_x = (x - P) \setminus \bigcup_{y \in S(x)} (y - P)
  \end{equation*}
  satisfies
  \begin{equation*}
    \Omega_x \subseteq x - Q_4.
  \end{equation*}
  For a picture of these parabolic caps in $d = 2$, see \fref{upper}.  As in \mycite{Dalal}, the dynamic programming principle implies that
  \begin{equation*}
    s_X(x) \leq \# (X \cap \Omega_x) + \max_{y \in S(x)} s_X(y)
  \end{equation*}
  holds for all $x \in \Z^d$.

  Step 2.  For $x \in \Z^d$ with $x_d = 2 n > 0$, let
  \begin{equation*}
    T(x) = \{ \mathbf y \in (\Z^d)^{n+1} : \mathbf y_1 = x \mbox{ and } \mathbf y_{k+1} \in S(\mathbf y_k).
  \end{equation*}
  Since $s_X(\mathbf y_{n+1}) = 0$, the previous step implies that
  \begin{equation*}
    s_X(x) \leq \max_{\mathbf y \in T(x)} \sum_{k = 1}^n \#(X \cap \Omega_{\mathbf y_k}) = \max_{\mathbf y \in T(x)} \# \left( X \cap \bigcup_{k=1}^n \Omega_{\mathbf y_k} \right).
  \end{equation*}
  Since
  \begin{equation*}
    \# T(x) \leq e^{C x_d} \quad \mbox{and} \quad \left| \bigcup_{k=1}^n \Omega_{\mathbf y_k} \right| \leq C x_d,
  \end{equation*}
  the Poisson law together with a union bound gives
  \begin{equation*}
    \P[s_X(x) \geq t] \leq e^{C x_d - t}.
  \end{equation*}
  Replacing $t$ by $C x_d t$ yields the lemma for $r > 0$ an even integer.  Obtain the remaining $r$ by observing that $r \mapsto s_X(r e_d)$ is non-decreasing.
\end{proof}

\subsection{Localization}

Using the tail bounds, which are already localized, we obtain full localization of semiconvex peeling.  The essential idea is that, if the structure of $X \setminus Q_{\lalpha r}$ is affecting the value of $s_X(r e_d)$, then some point on $\partial^+ Q_{\lalpha r}$ has height less than some point on $\partial^+ Q_r$.  If $\lalpha > 1$ is large, then the tail bounds imply this is unlikely.  This situation is depicted in \fref{local}.

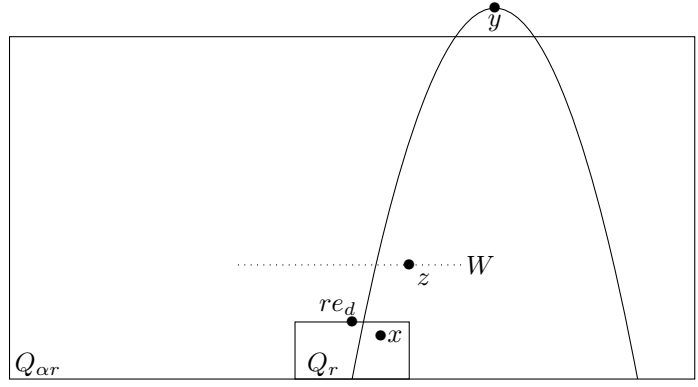
\begin{figure}[h]
  \begin{tikzpicture}[scale=.03\textwidth,x=1,y=1]

  \draw (-12,0) rectangle (12,12);
  \draw (-11,0.5) node {$Q_{\alpha r}$};
  
  \draw (-2,0) rectangle (2,2);
  \draw (-1,0.5) node {$Q_r$};
  
  \draw (0,2) node {$\bullet$};
  \draw (-0.5,2.5) node {$r e_d$};
  
  \draw (1,1.5) node {$\bullet$};
  \draw (1.5,1.5) node {$x$};
  
  \draw (0,0) parabola bend (5,13) (10,0);
  \draw (5,13) node {$\bullet$};
  \draw (5,12.5) node {$y$};
  
  \draw[dotted] (-4,4) -- (4,4);
  \draw (2,4) node {$\bullet$};
  \draw (2.5,3.5) node {$z$};
  \draw (4.5,4) node {$W$};
  
\end{tikzpicture}
  \caption{A schematic of localization failure.}
  \label{f.local}
\end{figure}

\begin{lemma}
  \label{l.local}
  There is an $\lalpha \geq 1$ such that, if $r \geq 1$, then
  \begin{equation*}
    \P[s_{X \cap Q_{\lalpha r}}(r e_d) \neq s_{X \cup (H \setminus Q_{\lalpha r})}(r e_d)] \leq e^{- r}.
  \end{equation*}
\end{lemma}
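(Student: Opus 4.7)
The plan is to combine the monotonicity from Lemma~\ref{l.monotone} with the two tail bounds: use Lemma~\ref{l.upper} to control $s_{X\cup(H\setminus Q_{\lalpha r})}(re_d)$ from above by $Cr$, use Lemma~\ref{l.lower} (and translation invariance of the Poisson law) to force a lower bound of order $\lalpha r$ on $s_X$ at every point on or near the upper boundary $\partial^+ Q_{\lalpha r}$, and then choose $\lalpha$ so large that the lower bound on the outer boundary swamps the upper bound at $re_d$. On that good event the two semiconvex peelings will be forced to agree at $re_d$ because neither game can afford to reach the boundary of $Q_{\lalpha r}$.

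More concretely, first write $s_1 := s_{X\cap Q_{\lalpha r}}$ and $s_2 := s_{X\cup (H\setminus Q_{\lalpha r})}$, note $s_1\leq s_2$, and show $s_2(re_d)\leq Cr$ with probability at least $1-e^{-r}$, via $s_2\leq s_{X\cup(H\setminus Q_r)}$ for $\lalpha\geq 1$ and Lemma~\ref{l.upper} with $t=1$. Next, by translation invariance of the Poisson law, Lemma~\ref{l.lower} gives, for each $y$ with $y_d = \lalpha r$, the bound $s_X(y)\geq c\lalpha r$ with probability at least $1-e^{-c\lalpha r}$. A union bound over a unit lattice mesh of size $O((\lalpha r)^{d-1})$ covering $\partial^+ Q_{\lalpha r}$ (and, by similar applications with smaller cylinders, a mesh along the upper portion of the side walls) produces an event of probability at least $1-e^{-cr}$ on which $s_X(y) \geq c\lalpha r$ for every mesh point $y$ near $\partial Q_{\lalpha r}\setminus\partial H$. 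Fixing $\lalpha$ large enough that $c\lalpha r > Cr + 2$, the two events combine to a set of probability at least $1-e^{-r}$ (after absorbing constants).

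On this good event we claim $s_1(re_d)=s_2(re_d)$. The argument is game-theoretic via Proposition~\ref{p.semidpp}. Given any cap $y-P$ with $y\in x+\partial P$ used in the dynamic programming identity for $s_2$ at a point $x$ with $s_2(x)\leq Cr$, if that cap extends outside $Q_{\lalpha r}$ then it contains a point $z$ with $s_X(z)\geq c\lalpha r$, and hence $s_2(z)\geq c\lalpha r > Cr \geq s_2(x)$, a contradiction with the dynamic programming formula. Thus every cap appearing in an optimal play for the minimizer in the $s_2$ game from $re_d$ is contained in $Q_{\lalpha r}$, so on such caps $Z\cap(y-P) = X\cap(y-P) = (X\cap Q_{\lalpha r})\cap(y-P)$ and player II's moves land in $X\cap Q_{\lalpha r}$, where by monotonicity the value of the remaining game in $s_1$ is sandwiched between $s_1\leq s_2$. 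Iterating this observation down the game tree (of depth at most $Cr$) shows the optimal $s_2$-strategy is also a feasible strategy realizing the same value in $s_1$, giving $s_1(re_d)\geq s_2(re_d)$.

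The main obstacle is the last recursive step: making the "optimal caps stay in $Q_{\lalpha r}$" argument rigorous along an entire game tree rather than at a single step, so that $s_2$ values along the tree can be replaced by $s_1$ values without allowing the exterior to leak in at a later stage. This is handled by choosing $\lalpha$ so that the lower-bound event on $\partial Q_{\lalpha r}$ dominates the upper-bound $Cr$ with a strict gap, and by observing that every node in the game tree inherits the bound $s_2\leq Cr$ (since semiconvex heights decrease by one per move), so the single-step contradiction above applies uniformly through the tree.
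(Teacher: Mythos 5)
Your overall strategy --- an upper tail bound at $r e_d$, a lower tail bound at polynomially many sentinel points that every escaping cap must contain, a union bound, and a choice of $\lalpha$ large enough that the two bounds are incompatible --- is the same as the paper's. However, there are two genuine gaps in how you deploy it. The first is that you run the escaping-cap argument in the wrong game. For $s_2=s_{X\cup(H\setminus Q_{\lalpha r})}$, the conclusion that the minimizer's caps stay inside $Q_{\lalpha r}$ is essentially automatic and consumes no randomness: a cap $y-P$ containing both a point of $Q_r$ and a point of $H\setminus Q_{\lalpha r}$ must have apex height $y_d>\lalpha r$ once $\lalpha\geq C$, so it contains open pieces of the solid set $H\setminus Q_{\lalpha r}\subseteq X_2$ above height $\lalpha r$, on which $s_2$ is infinite; such a cap is useless to the minimizer no matter what $X$ looks like. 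The caps that actually threaten the equality are those available to the minimizer in the $s_1=s_{X\cap Q_{\lalpha r}}$ game, where the exterior has been emptied and an escaping cap may genuinely be cheap. Your final step --- transferring the minimizer's optimal $s_2$-strategy into the $s_1$ game --- proves only $s_1(re_d)\leq s_2(re_d)$ (a fixed minimizer strategy does at least as well against a maximizer with fewer points), which is the monotonicity you already have. To get $s_1(re_d)\geq s_2(re_d)$ you must show the maximizer can punish \emph{every} cap in the $s_1$ game, including the escaping ones, i.e.\ that every escaping cap contains a point of $X\cap Q_{\lalpha r}$ whose $s_1$-height exceeds $Cr$; that is where \lref{lower} is actually consumed. (The paper organizes this bookkeeping as an induction on the first peel index $n$ at which $S_n(X_1)$ and $S_n(X_2)$ differ on $Q_r$.)

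The second gap is the placement of the sentinels. A cap containing $x\in Q_r$ and escaping $Q_{\lalpha r}$ need not contain any unit-mesh point near $\partial Q_{\lalpha r}$: it can exit through $\partial^+Q_{\lalpha r}$ through a neck of radius $\sqrt{2(y_d-\lalpha r)}$, which can be arbitrarily small, and the portions of the side wall it meets can lie at heights where the semiconvex height is $O(1)$. Moreover, even where a sentinel is hit, a lower bound on $s_X(y)$ for $y\in\partial^+Q_{\lalpha r}$ does not lower-bound $s_1(y)=s_{X\cap Q_{\lalpha r}}(y)$, because the cylinder $y^d+Q_{\lalpha r}$ required by \lref{lower} protrudes from $Q_{\lalpha r}$, and it is $s_1$ that must be large for the maximizer in the $s_1$ game. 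Both defects are repaired by the paper's choice of sentinels, the lattice points $W=(\Z^{d-1}\times\R)\cap\partial^+Q_{\lalpha r/3}$: since an escaping cap has apex height above $\lalpha r$, its horizontal slice at height $\lalpha r/3$ is a ball of radius at least $c\sqrt{\lalpha r}$ coming within distance $\lalpha r/3$ of the axis, so it captures some $z\in W$; and the cylinder $z^d+Q_{\lalpha r/3}$ lies inside $Q_{\lalpha r}$, so \lref{lower} yields $s_1(z)\geq s_{X\cap(z^d+Q_{\lalpha r/3})}(z)\geq c\lalpha r$ with the probability you need.
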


\begin{proof}
  Suppose $s_{X \cap Q_{\lalpha r}}(r e_d) < s_{X \cup (H \setminus Q_{\lalpha r})}(r e_d)$ for some $\lalpha \geq 1$ large and to be determined.  Write $X_1 = X \cap Q_{\lalpha r}$ and $X_2 = X \cup (H \setminus Q_{\lalpha r})$.  By hypothesis, there is a least $n \leq s_{X_1}(r e_d)$ such that $S_n(X_1) \cap Q_r \neq S_n(X_2) \cap Q_r$.

  By monotonicity, $S_n(X_1) \subseteq S_n(X_2)$.  Thus, we can choose a point $x \in Q_r \cap S_n(X_2) \setminus S_n(X_1)$.  By the definition of semiconvex peeling, there is a $y \in H$ such that $x \in (y - P)$, $(y - P) \cap X_1 \cap \interior(S_{n-1}(X_1)) = \emptyset$, and $(y - P) \cap X_2 \cap \interior(S_{n-1}(X_2)) \neq \emptyset$.  Since $Q_r \cap S_{n-1}(X_1) = Q_r \cap S_{n-1}(X_2)$ and $X_1 \cap Q_{\lalpha r} = X_2 \cap Q_{\lalpha r}$, we have $(y - P) \setminus Q_{\lalpha r} \neq \emptyset$.  Making $\lalpha \geq 1$, there must be a $z \in (y - P) \cap Q_{\lalpha r}$ with
  \begin{equation*}
    z \in W = (\Z^{d-1} \times \R) \cap \partial^+ Q_{\lalpha r / 3}.
  \end{equation*}
  See \fref{local} for a schematic of our situation.  Observe that
  \begin{equation*}
    s_{X \cap (z^d + Q_{\lalpha r /3})}(z) \leq s_{X \cap Q_{\lalpha r}}(z) \leq s_{X \cap Q_{\lalpha r}}(x) \leq s_{X \cup (H \setminus Q_r)}(r e_d).
  \end{equation*}
  Using the tail bounds in \lref{lower} and \lref{upper}, the probability this happens for fixed $z \in W$ is at most $\exp(- c \lalpha s)$.  Since $\# W \leq C \lalpha^{d-1} r^{d-1}$, the lemma follows by a union bound.
\end{proof}

\subsection{Concentration}

The localization of semiconvex peeling allows us to periodize our problem.  That is, we are able to replace the half space $H$ by a cylinder over a torus.  The primary advantage of this is that the semiconvex peels $S_n(X)$ on the cylinder are a priori Lipschitz graphs over compact sets.  This additional regularity allows us run a Martingale argument.  We follow \mycite{Armstrong-Cardaliaguet}, replacing their ingredients with our analogues.

For $L \geq 1$, consider the $(L \Z^{d-1} \times \{ 0 \})$-periodization
\begin{equation*}
  X^L = (X \cap (-\tfrac12 L,\tfrac12 L)^{d-1} \times (0,\infty)) + L (\Z^{d-1} \times \{ 0 \}).
\end{equation*}
\red The point cloud $X^L$ is the $L\Z^{d-1}$-periodic extension of $X\cap (-\tfrac12 L,\tfrac12 L)^{d-1}\times (0,\infty)$. \nc Localization immediately yields the following.

\begin{lemma}
  \label{l.periodize}
  If $r \geq 1$ and $L \geq C r$, then
  \begin{equation*}
    \P[s_X(r e_d) \neq s_{X^L}(r e_d) ] \leq e^{-r}.
  \end{equation*}
\end{lemma}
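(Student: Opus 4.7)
The plan is to reduce the comparison between $X$ and $X^L$ to the localization statement from Lemma \ref{l.local}. Let $\lalpha \geq 1$ be the constant from Lemma \ref{l.local}, and set $C = 2\lalpha$, so that whenever $L \geq Cr$, the cylinder $Q_{\lalpha r}$ is contained in the fundamental cell $(-L/2,L/2)^{d-1} \times (0,\infty)$ used to define $X^L$. On this event of parameters, the definition of the periodization gives the key pointwise identity $X \cap Q_{\lalpha r} = X^L \cap Q_{\lalpha r}$.

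The next step is to observe that this identity propagates to the ``completed'' configurations used in the localization lemma:
\begin{equation*}
  X \cup (H \setminus Q_{\lalpha r}) \;=\; (X \cap Q_{\lalpha r}) \cup (H \setminus Q_{\lalpha r}) \;=\; X^L \cup (H \setminus Q_{\lalpha r}),
\end{equation*}
since outside $Q_{\lalpha r}$ the set $H \setminus Q_{\lalpha r}$ swallows every point of either $X$ or $X^L$. Combined with the monotonicity statement of Lemma \ref{l.monotone}, this sandwiches both $s_X(re_d)$ and $s_{X^L}(re_d)$ between the two quantities compared in Lemma \ref{l.local}:
\begin{equation*}
  s_{X \cap Q_{\lalpha r}}(re_d) \;\leq\; s_X(re_d),\; s_{X^L}(re_d) \;\leq\; s_{X \cup (H \setminus Q_{\lalpha r})}(re_d).
\end{equation*}

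Finally, the lemma follows by invoking Lemma \ref{l.local}: with probability at least $1 - e^{-r}$, the lower and upper bounds of this sandwich coincide, forcing $s_X(re_d) = s_{X^L}(re_d)$. There is no real obstacle here, since the whole point is that the periodization is a purely ``far-field'' modification of $X$, and Lemma \ref{l.local} already captured the fact that only the local cylinder influences the value of the height function. The one thing to verify carefully is that the horizontal section of $Q_{\lalpha r}$ (a Euclidean ball of radius $\lalpha r$) really does fit inside the horizontal open cube of half-side $L/2$, which is immediate from $B_{\lalpha r}^{d-1} \subseteq (-\lalpha r, \lalpha r)^{d-1} \subseteq (-L/2, L/2)^{d-1}$ once $L \geq 2\lalpha r$.
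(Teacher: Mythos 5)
Your proposal is correct and is exactly the argument the paper intends: the paper's proof of this lemma is the single line ``This is immediate from Lemma \ref{l.local}.'' Your write-up simply makes the implicit sandwich explicit — since $Q_{\lalpha r}$ lies in the fundamental cell when $L \geq 2\lalpha r$, both $X$ and $X^L$ contain $X \cap Q_{\lalpha r}$ and are contained in $X \cup (H \setminus Q_{\lalpha r})$, so monotonicity traps both heights between the two quantities that Lemma \ref{l.local} shows agree with probability at least $1 - e^{-r}$.
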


\begin{proof}
  This is immediate from \lref{local}.
\end{proof}

Our present goal is to prove the following fluctuation bound.

\begin{lemma}
  \label{l.fluct}
  For $r \geq t \geq C$ and $L \geq C r$,
  \begin{equation*}
    \P[|s_{X^L}(r e_d) - \E[s_{X^L}(r e_d)]| \geq (\log L)^2 (\log r) r^{1/2} t] \leq C e^{-c t^{2/3}}.
  \end{equation*}
\end{lemma}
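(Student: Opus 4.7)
The plan is to prove concentration of $Y := s_{X^L}(re_d)$ about its mean $\mu := \E[Y]$ using the Doob martingale method of Armstrong--Cardaliaguet, with the semiconvex peeling tail bounds and localization established above playing the role of their mixing input.

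\textbf{Setup and Doob martingale.} Fix a scale $\ell \in [1,r]$ to be optimized and partition the cylinder $\mathcal{C} = (-L/2,L/2)^{d-1} \times (0, C r)$ into boxes $\{B_j\}_{j=1}^N$ of side $\ell$; by \lref{upper}, $X^L \setminus \mathcal{C}$ affects $Y$ only on an event of probability $\leq e^{-r}$. Since the Poisson cloud is independent across disjoint cells, setting $\mathcal{F}_j = \sigma(X^L \cap B_k : k \leq j)$ yields a Doob martingale $M_j = \E[Y \mid \mathcal{F}_j]$ with $M_0 = \mu$ and $M_N = Y$ modulo the negligible event above.

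\textbf{Increment estimate.} The crux is to show that the martingale differences $D_j := M_j - M_{j-1}$ admit envelopes $|D_j| \leq \xi_j$ with exponential tails of scale $\ell$, i.e.\ $\P(\xi_j > k\ell) \leq C e^{-ck}$ for $k\geq 1$. Writing $D_j$ via resampling of $X^L \cap B_j$ and using monotonicity (\lref{monotone}), the change in $Y$ is bounded by the number of semiconvex peels passing through a parabolic neighborhood of $B_j$. This count is controlled by the upper tail bound (\lref{upper}); combined with the lower bound (\lref{lower}) and the localization lemma \lref{local}, only cells lying in a bounded neighborhood of the vertical axis contribute nontrivially, reducing the effective cell count to a quantity polynomial in $r/\ell$ times logarithmic corrections in $L$ from the periodic wraparound.

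\textbf{Truncated Azuma.} For a threshold $M > 0$ let $E_M = \{\max_j \xi_j \leq M\}$. On $E_M$ the martingale has $[-M,M]$-valued increments, and Azuma--Hoeffding gives
\[
  \P\!\left(|Y - \mu| > s,\, E_M\right) \leq 2 \exp\!\left(-\frac{c s^2}{N M^2}\right),
\]
while a union bound via the exponential envelopes yields $\P(E_M^c) \leq N e^{-cM/\ell}$. Targeting $s = (\log L)^2 (\log r)\, r^{1/2} t$, we choose $\ell$ as a small poly-logarithmic factor (to absorb the logarithms coming from the cell count and the localization) and $M \sim \ell\, t^{2/3}$ to balance the two terms; this produces the claimed tail $C e^{-c t^{2/3}}$.

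\textbf{Main obstacle.} The delicate step is the increment bound. Semiconvex peeling is discrete and a priori nonlocal: a single extra Poisson point can in principle redistribute many peels. Proving that this cascade is confined to a parabolic neighborhood of the perturbed cell, with exponentially tight control on the number of affected layers, requires combining a monotone coupling of the two resampled Poisson clouds with the vertical tail estimates on $s_X$ established in \lref{upper} and \lref{lower}. Once this stability estimate is in hand, the rest of the argument is a standard truncated martingale computation.
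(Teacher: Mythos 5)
Your architecture is genuinely different from the paper's, and unfortunately the difference is where the proof lives or dies. You build a Doob martingale over a \emph{spatial} partition of the cylinder into boxes of side $\ell$ and then run truncated Azuma. The fatal bookkeeping problem is the cell count. Localization (\lref{local}) confines the dependence of $s_{X^L}(re_d)$ to the cylinder $Q_{Cr}$, not to a bounded neighborhood of the vertical axis as you assert; so after localization you still have $N \sim (r/\ell)^d$ relevant boxes. Azuma then gives fluctuations of order $\sqrt{N}\,M$, where $M$ is the (truncated) increment envelope. Since each peel $\partial S_n(X)$ is a graph over the whole torus, a generic box of side $\ell$ meets $\Theta(\ell)$ peels, and resampling it can change the height by that order; even granting the wildly optimistic envelope $M = O(1)$, you get $\sqrt{N} \sim (r/\ell)^{d/2}$, which exceeds $r^{1/2}$ for every $d \ge 2$ and every admissible $\ell$ (taking $\ell$ large to shrink $N$ inflates $M$ at least linearly in $\ell$, so no choice of $\ell$ balances). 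To rescue the spatial approach you would need an Efron--Stein-type statement that the \emph{typical} influence of a unit box on $s_{X^L}(re_d)$ has second moment $O(r^{-(d-1)})$ --- i.e., that almost all boxes are irrelevant to the value at the single point $re_d$ --- and that is essentially equivalent to the variance bound you are trying to prove; none of \lref{lower}, \lref{upper}, \lref{local} supplies it. Your "main obstacle" paragraph correctly identifies that the increment estimate is the crux, but the cascade-confinement statement you defer to is not merely delicate: in the form needed, it is the theorem.

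The paper avoids this entirely by filtering in the \emph{peel index} rather than in space: $F_n$ is generated by $S_n(X)$ and the points outside $\interior(S_n(X))$, and $Y_n = \E[s_X(re_d)\mid F_n]$. Since $s_X(re_d) \le Crt$ with overwhelming probability by \lref{upper}, the martingale has only $O(rt)$ steps, and $\sqrt{r}$ appears for the right reason. The increments are controlled by a swapping trick with an independent copy $\tilde X$ (resampling everything \emph{inside} the $n$-th peel, not inside a box): monotonicity gives $Y_{n+1}-Y_n \le 1$ deterministically, and the downward increment is shown to have exponential tails at scale $(\log L)^2$ by bounding the volume of parabolic sections of the strip $S_n(X)\setminus S_{n+1}(X)$, using the a priori $CL$-Lipschitz graph structure of the periodized peels to reduce to polynomially many test parabolas. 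Truncated Azuma over these $O(rt)$ steps then yields exactly $(\log L)^2(\log r)r^{1/2}t$ with tail $e^{-ct^{2/3}}$. If you want to salvage your write-up, the fix is not a better box analysis but a change of filtration.
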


For the remainder of this subsection, we write $X$ in place of $X^L$.

There is a natural filtration associated to the semiconvex peeling of $X$.  Let $F_n$ be the $\sigma$-algebra generated by $S_n(X)$ and $X \setminus \interior(S_n(X))$.  For $r \geq 1$, we study the Martingale
\begin{equation*}
  Y_n = \E[s_X(r e_d) | F_n].
\end{equation*}
We prove concentration by obtaining bounds on the increments.

We measure the increments using a swapping trick.  Let $\tilde X$ be an independent copy of $X$ and define the swapped point clouds
\begin{equation*}
  X_n = (X \setminus \interior(S_n(X))) \cup (\tilde X \cap \interior(S_n(X))).
\end{equation*}
The swapped point cloud $X_n$ is obtained by switching from $X$ to $\tilde X$ after $n$ peels.

The key observation is that, since $X$ and $\tilde X$ are independent, the swapped cloud $X_n$ has the same law as $X$.  Moreover, we have $\E[X_n|F_n] = \E[X|F_n]$, and thus
\begin{equation*}
  \begin{aligned}
    Y_{n+1} - Y_n
    & = \E[s_X(r e_d) | F_{n+1}] - \E[s_X(r e_d) | F_n] \\
    & = \E[s_{X_{n+1}}(r e_d) | F_{n+1}] - \E[\E[s_{X_n}(r e_d) | F_{n+1}]|F_{n}] \\
    & = \E[s_{X_{n+1}}(r e_d) - s_{X_n}(r e_d)| F_{n+1}],
  \end{aligned}
\end{equation*}
\red where in the last line we use that $s_{X_n}$ and $F_{n+1}$ are conditionally independent given $F_n$. \nc To understand the increment $Y_{n+1} - Y_n$, it suffices to relate the peelings of the point clouds $X_n$ and $X_{n+1}$.  This is tractable because the point clouds $X_n$ and $X_{n+1}$ differ only in the strip $\interior(S_n(X)) \setminus \interior(S_{n+1}(X))$.  We see below that the height of this strip controls in the increment.

The upper bound on the increments is easy, since it corresponds to the case where the strip has zero height and the increment is $1$.

\begin{lemma}
  Almost surely, $Y_{n+1} - Y_n \leq 1$.
\end{lemma}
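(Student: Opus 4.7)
The plan is to prove the stronger pointwise bound
\[
s_{X_{n+1}}(re_d) - s_{X_n}(re_d) \leq 1
\]
almost surely. Once this is in hand, the lemma follows at once by taking the conditional expectation in the identity $Y_{n+1}-Y_n = \E[s_{X_{n+1}}(re_d) - s_{X_n}(re_d)\mid F_{n+1}]$ established above.

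The first observation I would record is that, since $X_n$ agrees with $X$ on $H\setminus\interior(S_n(X))$ and the first $n$ semiconvex peels of $X_n$ are determined by those points, we have $S_k(X_n)=S_k(X)$ for all $k\leq n$. An identical argument gives $S_k(X_{n+1})=S_k(X)$ for $k\leq n+1$. Beyond these depths, both swapped clouds coincide with $\tilde X$: on $\interior(S_k(X_n))\subseteq\interior(S_n(X))$ (for $k\geq n$) we have $X_n=\tilde X$, and analogously for $X_{n+1}$ once $k\geq n+1$.

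The heart of the argument is the nested containment
\[
S_{k+1}(X_{n+1})\subseteq S_k(X_n)\quad\text{for every }k\geq n,
\]
proved by induction on $k$. The base case $k=n$ is
\[
S_{n+1}(X) = \semi(X\cap\interior(S_n(X))) \subseteq \semi(S_n(X)) = S_n(X),
\]
using the monotonicity of $\semi$ (\lref{monotone}) and the semiconvexity of $S_n(X)$. For the inductive step with $k\geq n+1$, the remark of the previous paragraph lets us replace each point cloud by $\tilde X$ on the relevant interior, so the inductive hypothesis $S_k(X_{n+1})\subseteq S_{k-1}(X_n)$ combined with monotonicity of $\semi$ gives
\[
S_{k+1}(X_{n+1}) = \semi\bigl(\tilde X\cap\interior(S_k(X_{n+1}))\bigr) \subseteq \semi\bigl(\tilde X\cap\interior(S_{k-1}(X_n))\bigr) = S_k(X_n).
\]

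Summing the indicator functions of the interiors then telescopes to
\[
s_{X_{n+1}}(x) \leq s_{X_n}(x) + \id_{\interior(S_n(X))}(x) \leq s_{X_n}(x)+1
\]
pointwise on $H$, which is the desired bound. I do not expect a real obstacle here; this is the easy one-sided direction of a peeling-type martingale increment estimate, and everything reduces to repeated application of the monotonicity lemma. The substantive difficulty in this section will instead be the matching control on $|Y_{n+1}-Y_n|$ needed to feed the concentration inequality in \lref{fluct}.
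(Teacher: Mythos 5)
Your argument is correct in substance and is essentially the paper's argument in a different packaging. The paper also reduces to showing $s_{X_{n+1}}\leq s_{X_n}+\id_{S_{n+1}(X)}$ and does so by writing down one explicit decreasing sequence of semiconvex sets (the peels of $X$ up to level $n+1$, followed by $S_{n+1}(X)\cap S_{m-1}(X_n)$) and invoking the second statement of \lref{monotone}; you instead prove the peel-by-peel containment $S_{k+1}(X_{n+1})\subseteq S_k(X_n)$ for $k\geq n$ by induction and telescope. The two routes are interchangeable, and your induction has the small advantage of making explicit where monotonicity of $\semi$ is used at each level.

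The one step you should not wave at is the opening assertion that $S_k(X_n)=S_k(X)$ for $k\leq n$ "since the first $n$ peels are determined by the points of $X$ outside $\interior(S_n(X))$." That is the conclusion restated, not a reason: $\semi$ sees the whole point cloud, and a cap $y-P$ avoiding $X_n\cap\interior(S_k(X))$ could a priori still contain points of $X\cap\interior(S_n(X))$ that were swapped out, which would make $S_{k+1}(X_n)$ strictly smaller. The fix is the observation that any cap $y-P$ meeting $X\cap\interior(S_k(X))$ contains a point of depth exactly $k$: slide $y-te_d$ downward until the cap first empties of $X\cap\interior(S_k(X))$; the last point $q$ hit lies on the boundary of a cap avoiding $X\cap\interior(S_k(X))$, hence $q\notin\interior(S_{k+1}(X))\supseteq\interior(S_n(X))$, so $q$ survives the swap and lies in $X_n$. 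Combined with the easy reverse inclusion (a cap avoiding $X\cap\interior(S_k(X))$ is disjoint from $S_{k+1}(X)\supseteq\interior(S_n(X))$, hence avoids the inserted $\tilde X$ points), this gives the claimed equality. To be fair, the paper's own proof is equally terse at the corresponding point (its assertion $X_{n+1}\subseteq X_n\cup\partial S_{n+1}(X)$ and its verification of the hypothesis of \lref{monotone} hide the same fact), so with this one addition your proof is complete.
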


\begin{proof}
  From the definitions, we obtain $X_{n+1} \subseteq X_n \cup \partial S_{n+1}(X)$.  Thus, the sets
  \begin{equation*}
    S_m' = \begin{cases}
      S_m(X) & \mbox{if } m \leq n + 1 \\
      S_{n+1}(X) \cap S_{m-1}(X_n) & \mbox{if } m \geq n + 2
    \end{cases}
  \end{equation*}
  are decreasing, semiconvex, and satisfy $X_{n+1} \subseteq \cup_{m \geq 1} \partial S'_m$.  Using \lref{monotone}, we obtain $s_{X_{n+1}} \leq \sum_m \id_{S'_m} \leq s_{X_n} + \id_{S_{n+1}(X)} \leq s_{X_n} + 1$.  Conclude by the swapping trick described above.
\end{proof}

The lower bound on the increments is harder, since the depth of the strip is a priori unbounded.  It is here that we use the simplifications afforded by the periodization.  The $(L \Z^{d-1} \times \{ 0 \})$-periodicity implies that the boundary $\partial S_n(X)$ is the graph of a $L \Z^{d-1}$-periodic and $C L$-Lipschitz function over the hyperplane $\partial H$.

\begin{lemma}
  Almost surely, $\P[Y_n - Y_{n+1} \geq C (\log L)^2 t | F_n] \leq \exp(-t)$.
\end{lemma}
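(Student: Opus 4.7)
The plan is to establish the lower-tail bound on the martingale increment $Y_n - Y_{n+1}$, complementing the easy upper bound $Y_{n+1} - Y_n \leq 1$ just proved. Using the swapping identity,
\[Y_{n+1} - Y_n = \E\bigl[s_{X_{n+1}}(re_d) - s_{X_n}(re_d) \,\big|\, F_{n+1}\bigr],\]
so it suffices to control $s_{X_n}(re_d) - s_{X_{n+1}}(re_d)$ from above.  The two clouds differ only inside the strip $T := \interior(S_n(X)) \setminus \interior(S_{n+1}(X))$, with $\tilde X \cap T$ appearing in $X_n$ and $X \cap T$ appearing in $X_{n+1}$.  Let $M$ denote the maximum vertical extent of $T$ over the $L$-periodic cell.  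I will prove (i) the deterministic estimate $|s_{X_n}(re_d) - s_{X_{n+1}}(re_d)| \leq CM$, and (ii) the conditional tail $\P[M \geq C(\log L)^2 t \mid F_n] \leq e^{-t}$; combining the two immediately yields the lemma.

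For (i), I will construct a decreasing family of semiconvex sets $\tilde S_k$ that dominates the peeling of $X_n$ in the sense required by the second part of \lref{monotone}.  The first $n$ terms are $\tilde S_k = S_k(X)$.  For $k = n+1, \dots, n + \lceil CM \rceil$ I insert an auxiliary family of semiconvex envelopes inside $S_n(X)$ that captures the points of $\tilde X \cap T$ on their boundaries; for $k \geq n + \lceil CM \rceil + 1$ I take $\tilde S_k = S_{k - \lceil CM \rceil}(X_{n+1})$, which is contained in $S_{n+1}(X) \subseteq S_n(X)$ and hence joins the family decreasingly.  Applying \lref{monotone} then yields $s_{X_n}(re_d) \leq s_{X_{n+1}}(re_d) + CM$, and the reverse direction is analogous.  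The key geometric input is that each semiconvex peel descends at least a positive vertical amount, so a strip of vertical thickness $M$ can be processed in $O(M)$ peels.

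For (ii), I note that, conditional on $F_n$, the vertical slice of $T$ above any fixed $x^d \in (-L/2, L/2)^{d-1}$ is precisely the first-peel depth of the Poisson cloud $X \cap \interior(S_n(X))$ below the $CL$-Lipschitz graph $\partial S_n(X)$.  A cap-volume computation, exactly as in the proof of \lref{upper}, shows this depth exceeds $t$ with probability at most $\exp(-ct^{(d+1)/2})$.  The Lipschitz regularity of $\partial S_{n+1}(X)$, which is inherited from the periodization, allows me to replace the supremum over $x^d$ by a supremum over a net of size $O(L^{d-1})$; a union bound then gives $M \leq C(\log L)^{2/(d+1)} t \leq C(\log L)^2 t$ with conditional probability at least $1 - e^{-t}$, the exponent $(\log L)^2$ of the lemma comfortably absorbing the geometric losses.

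The \emph{main obstacle} is step (i): producing by hand an explicit decreasing family of semiconvex sets that deterministically interpolates between the peels of $X_n$ and $X_{n+1}$ with an offset proportional to $M$.  The parabolic (rather than planar) shape of the forbidden obstacles prevents a naive vertical thickening of a semiconvex set from remaining semiconvex, so the interpolating family must be crafted with care.  The periodization is essential: it forces each peel boundary to be a $CL$-Lipschitz graph over the torus $(\R/L\Z)^{d-1}$, which is what makes the heuristic ``vertical distance $M$, absorbed in $O(M)$ peels'' into a quantitative inequality.
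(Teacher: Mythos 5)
Your step (i) contains a genuine gap that the rest of the argument cannot repair: the claimed \emph{deterministic} bound $|s_{X_n}(re_d)-s_{X_{n+1}}(re_d)|\leq CM$, with $M$ the vertical extent of the strip $T$, is false. The number of extra peels needed to absorb the swapped points $\tilde X\cap T$ is not controlled by the geometry of $T$ alone; it depends on the configuration of $\tilde X$ inside $T$. Concretely, if $\tilde X\cap T$ contains $N$ points stacked on a vertical line at heights $\ep,2\ep,\dots,N\ep$, then any downward parabola $y-P$ touching one of them contains all the points below it on that line, so each semiconvex peel removes only one of them and $N$ extra peels are required, while $M=N\ep$ is arbitrarily small. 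Thus the heuristic ``each peel descends a definite vertical amount'' fails, and no purely geometric quantity of the strip can serve as the interpolation length. The correct combinatorial bound is $Y_n-Y_{n+1}\leq Z_n$, where $Z_n$ is the maximal \emph{number of points of $\tilde X$} in a parabolic section $(y-P)\cap S_n(X)$ with $y$ in the strip: inserting the peels $S'_m=S_m(S_{n+1}(X)\cup(\interior(S_n(X))\cap\tilde X))$ one checks $S'_{Z_n}=S_{n+1}(X)$, and these inserted peels are $F_{n+1}$-measurable, which is what the conditional-expectation step needs.

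This forces a second probabilistic layer that your proposal omits entirely: one must convert the point count $Z_n$ into a volume bound using the Poisson law of $\tilde X$ (independent of $F_{n+1}$), obtaining $\E[Z_n\,|\,F_{n+1}]\leq C(\log L)W_n$ with $W_n$ the maximal \emph{volume} of a parabolic section of the strip, via a union bound over a net of $CL^C$ apexes on the Lipschitz graph $\partial S_{n+1}(X)$. Only then does one estimate the conditional tail of $W_n$ given $F_n$, using that a parabolic section of volume $\geq t$ lying in the strip must be empty of points of $X\cap\interior(S_n(X))$, an event of probability $\leq e^{-ct}$ per apex. Your step (ii) is in the right spirit for this last stage, but it targets the vertical extent rather than the parabolic-section volume, and volume is what the Poisson counting argument actually requires. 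The two $\log L$ factors in the statement come precisely from the two separate union bounds over the net, one for $Z_n$ versus $W_n$ and one for the tail of $W_n$; your accounting produces only one.
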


\begin{proof}
  The proof is divided into three steps.  First, we show that, if the increment is large, then there must be many points of $\tilde X$ contained in a parabolic sector of the strip $S_n(X) \setminus S_{n+1}(X)$.  Second, we show that this is exponentially unlikely unless some parabolic sector of the strip has large volume.  Third, we show that parabolic sectors of the strip with large volume is exponentially unlikely.  The Lipschitz regularity of the peels allows us to consider only polynomially many parabolic sectors.
  
  Step 1.  We prove that, almost surely, $Y_n - Y_{n+1} \leq Z_n$, where
  \begin{equation*}
    Z_n = \sup \{ \# (\tilde X \cap (y - P) \cap S_n(x)) : y \in S_n(x) \setminus S_{n+1}(X)\}.
  \end{equation*}
  By the swapping trick, we must show
  \begin{equation*}
    \E[s_{X_n}(r e_d)|F_n] \leq Z_n + \E[s_{X_{n+1}}(r e_d)| F_{n+1}].
  \end{equation*}
  We add some peels to $X_{n+1}$ to obtain a peeling of $X_n$ and then conclude by monotonicity.  We need to add peels to absorb the points
  \begin{equation*}
    X_n \setminus X_{n+1} = \tilde X \cap \interior(S_n(X)) \setminus S_{n+1}(X).
  \end{equation*}
  Consider
  \begin{equation*}
    S'_m = S_m(S_{n+1}(X) \cup (\interior(S_n(X)) \cap \tilde X)).
  \end{equation*}
  Observe that, if $y \in \interior(S'_m) \setminus S_{n+1}(X)$, then $(y - P) \cap \tilde X \cap \partial S'_l \neq \emptyset$ for $1 \leq l \leq m$.  It follows that
  \begin{equation*}
    S'_{Z_n} = S_{n+1}(X).
  \end{equation*}
  Consider the following nested semiconvex sets:
  \begin{equation*}
    S''_m = 
    \begin{cases}
      S_m(X) & \mbox{if } m \leq n \\
      S'_{m-n} & \mbox{if } n < m < n + Z_n \\
      S_{m-n-Z_n+1}(X_{n+1}) & \mbox{if } m \geq n + Z_n.
    \end{cases}
  \end{equation*}
  That is, we insert $S'_1, ..., S'_{Z_n-1}$ in between the $S_n(X_{n+1})$ and $S_{n+1}(X_{n+1})$.  By construction, we have $X_n \subseteq \cup_{m \geq 1} \partial S''_m$.  The monotonicity from \lref{monotone} implies that $s_{X_{n+1}} \leq \sum \id_{K''_m} \leq s_{X_n} + Z_n$.  Since $S'_1, ..., S'_{Z_n-1}$ are $F_{n+1}$-measurable, we have $Y_n - Y_{n+1} \leq Z_n$.

  Step 2.  We prove that, almost surely, $Z_n \leq C (\log L) W_n$, where
  \begin{equation*}
    W_n = 1 + \sup \{ |(y-P) \cap S_n(X)| : y \in S_n(X) \setminus \interior{S_{n+1}(X)} \}.
  \end{equation*}
  That is, $W_n$ is $1$ plus the largest volume of a parabolic section of the strip $S_n(X) \setminus S_{n+1}(X)$.  Note that $W_n$ is unchanged if we restrict $y$ to lie in $\partial S_{n+1}(X)$.  Since $\partial S_{n+1}(X)$ is a $CL$-Lipschitz graph over the set $(\R / L \Z)^{d-1} \times \{ 0 \}$, it has area $C L^{d-1}$.  We may therefore select, in an $F_{n+1}$-measurable way, points $y_1, ..., y_N$ with $N \leq C L^C$, such that, for any $y \in \partial S_{n+1}(X)$, there is a $y_k$ with $y - P \subseteq y_k - P$ and $|(y_k - P) \cap S_n(X)| \leq W_n$.  Using the Poisson law and a union bound, we see that
  \begin{equation*}
    \begin{aligned}
    \P[Z_n \geq t W_n | F_{n+1}]
    & \leq \P[\max_k \# (\tilde X \cap (y_k - P) \cap S_n(X)) \geq t W_n | F_{n+1}] \\
    & \leq C L^C \exp( - c t) \\
    & \leq C \exp ( - c t - C \log L).
    \end{aligned}
  \end{equation*}
  In particular, $\E[Z_n | F_{n+1}] \leq C (\log L) W_n$.

  Step 3.  We prove that, almost surely, for $t \geq 1$, $\P[W_n \geq C (\log L) t | F_n ] \leq \exp(-t)$.  When combined with steps 1 and 2, this gives the lemma.  Fix $t \geq 1$.   Using the $C L$-Lipschitz regularity of $\partial S_n(X)$, we can choose, in an $F_n$ measurable way, $C L^C$ many points $y_k$ such that, if $y \in \partial S_n(X)$ and $| (y - P) \cap S_n(X)| \geq t$, there is a $y_k \in y - P$ such that $|(y_k - P) \cap S_n(X)| \geq \tfrac12 t$.
  
  In the event that $W_n \geq t$, there is a $y_k \in S_n(X) \setminus S_{n+1}(X)$.  In particular, there is a $y_k$ such that $\tilde X \cap (y_k - P) \cap \interior(S_n(X)) = \emptyset$.  The Poisson law and a union bound implies $\P[W_n \geq t|F_n] \leq C L^C \exp(-c t) \leq C \exp( - c t - C\log L).$
\end{proof}

We interpolate the increment bounds with Azuma's inequality to obtain concentration.  This is standard, but we include a proof for completeness.

\begin{lemma}
  \label{l.azuma}
  For $t, n \geq e$, $\P[|Y_n - Y_0| \geq C (\log L)^2 (\log n) n^{1/2} t] \leq \exp(-t^{2/3})$.
\end{lemma}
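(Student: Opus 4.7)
The plan is to apply Azuma's inequality to the martingale $(Y_k)_{k \leq n}$ after truncating its increments $\Delta_k := Y_{k+1} - Y_k$ at a carefully chosen level $M$. The upper bound $\Delta_k \leq 1$ is already clean, but the previous lemma only supplies the sub-exponential lower-tail estimate $\P[-\Delta_k \geq c_0 (\log L)^2 u \mid F_k] \leq e^{-u}$ for some constant $c_0$, so Azuma cannot be applied directly. The $t^{2/3}$ exponent in the statement will arise from optimizing the trade-off between the truncation error and the power lost in Azuma.

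First, I would set $s := 2(\log n) t^{2/3}$ and $M := c_0 (\log L)^2 s$, and define the truncated increments $\tilde\Delta_k := \max(\Delta_k, -M)$, so that $|\tilde\Delta_k| \leq M$. Writing $\tilde\Delta_k - \Delta_k = (-\Delta_k - M)_+$ and integrating the lower-tail bound gives
\[
\mu_k := \E[\tilde\Delta_k \mid F_k] = \E[(-\Delta_k - M)_+ \mid F_k] \leq c_0 (\log L)^2 e^{-s},
\]
so the total deterministic bias $\sum_{k<n} \mu_k \leq c_0 (\log L)^2 n e^{-s}$ is negligible compared to $A := C(\log L)^2 (\log n) n^{1/2} t$ once $n, t \geq e$ and $C$ is chosen large.

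The centered differences $D_k := \tilde\Delta_k - \mu_k$ then form a martingale difference sequence with $|D_k| \leq 2M$, so Azuma's inequality yields
\[
\P\!\left[\,\Big|\sum_{k<n} D_k\Big| \geq A\,\right] \leq 2 \exp\!\left(-\frac{A^2}{8 n M^2}\right) = 2 \exp\!\left(-\frac{C^2 t^{2/3}}{32\, c_0^2}\right),
\]
which is at most $\tfrac12 e^{-t^{2/3}}$ for $C$ sufficiently large. Letting $B := \bigcap_{k<n} \{\tilde\Delta_k = \Delta_k\}$, a union bound combined with the lower-tail estimate gives $\P[B^c] \leq n e^{-s} = n^{1 - 2 t^{2/3}} \leq \tfrac12 e^{-t^{2/3}}$ for $n, t \geq e$. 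On $B$ we have $Y_n - Y_0 = \sum_{k<n} D_k + \sum_{k<n} \mu_k$, and combining the two estimates with the negligible bias produces the lemma.

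The main technical point is the calibration of the truncation level $M$: too small, and the event $B$ becomes too unlikely; too large, and Azuma loses power. The choice $s \asymp (\log n) t^{2/3}$ equalizes these two sources of error and produces precisely the $t^{2/3}$ exponent appearing in the statement; no better tail (such as $e^{-t}$ or $e^{-t^2}$) can be obtained by this direct truncation argument.
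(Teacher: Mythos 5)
Your proof is correct and follows essentially the same route as the paper's: truncate the martingale increments at level $\asymp (\log L)^2(\log n)t^{2/3}$, apply Azuma to the centered truncated differences, and control the exceptional event and the truncation bias with the conditional tail bound on $Y_n-Y_{n+1}$. Your one-sided truncation and explicit integration of the tail to bound the bias $\mu_k$ is a slightly cleaner bookkeeping of the step the paper summarizes as $\E[Z_k\mid F_k]\leq \exp(-(\log n)t^{2/3})$, but it is not a different argument.
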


\begin{proof}
  For $\beta \geq 1$, define the truncated increments
  \begin{equation*}
    Z_n = (Y_{n+1} - Y_n) \id_{|Y_{n+1} - Y_n| \leq \beta}
  \end{equation*}
  and observe that
  \begin{multline*}
    \P[|Y_n - Y_0| \geq \lalpha] \leq \sum_{k=0}^{n-1} \P[|Y_{k+1} - Y_k| > \beta] \\ + \P\left[\left|\sum_{k=0}^{n-1} Z_k - \E[Z_k|F_k]\right| \geq \lalpha - \left| \sum_{k=0}^{n-1} \E[Z_k|F_k] \right| \right].
  \end{multline*}
  If $\beta = C (\log L)^2 (\log n) t^{2/3}$, then the increment bounds imply, almost surely,
  \begin{equation*}
    \P[|Y_{k+1} - Y_k| > \beta] \leq \exp(- (\log n) t^{2/3} )
  \end{equation*}
  and
  \begin{equation*}
    \E[Z_k | F_k] \leq \exp(- (\log n) t^{2/3} ).
  \end{equation*}
  Azuma's inequality implies
  \begin{equation*}
    \P[|Y_n - Y_0| \geq \lalpha] \leq n e^{-(\log n) t^{2/3}} + \exp(-\tfrac12 n^{-1} \beta^{-2} (\lalpha - n e^{-(\log n) t^{2/3}})^2).
  \end{equation*}
  Setting $\lalpha = C (\log L)^2 (\log n) n^{1/2} t$ and assuming $t \geq C$, this becomes
  \begin{equation*}
    \P[|Y_n - Y_0] \geq C (\log L)^2 (\log n) n^{1/2} t] \leq \exp(-t^{2/3}).
  \end{equation*}
  Making the constant larger, we may assume $t \geq 1$.
\end{proof}

We now adapt the above estimate to prove the main fluctuation bound.

\begin{proof}[Proof of \lref{fluct}]
  Since $L \geq Cr$, the upper tail bound \lref{upper} implies
  \begin{equation*}
    \P[s_X(r e_d) \neq Y_{rt}] \leq C e^{-c r t}.
  \end{equation*}
  On the other hand, \lref{azuma} implies
  \begin{equation*}
    \P[|Y_{rt} - \E[s_X(r e_d)]| \geq C (\log L)^2 (\log r) r^{1/2} t] \leq e^{-t^{2/3}}.
  \end{equation*}
  Assuming $r \geq t \geq 1$, these combine to give the lemma.
\end{proof}

\subsection{Convergence}

Note that in this subsection we return to the general semiconvex peeling problem, where $X \sim Poisson(\id_H)$ has not been periodized.  In light of the fluctuation bounds in \lref{fluct}, all that remains is to control the expectation of $s_X(r e_d)$.  This is achieved by proving approximate additivity.

\red
\begin{lemma}
  \label{l.additivity}
  For $r \geq C$ and $t>0$
  \[\E[|s_X( (r+t) e_d) - s_X(r e_d) - s_X(te_d)|] \leq C (\log r)^3 r^{1/2}.\]
\end{lemma}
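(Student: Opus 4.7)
The approach is to introduce a coupled translated Poisson cloud and reduce to the fluctuation bound \lref{fluct}. Set $X' := (X - t e_d) \cap H$; by the translation invariance of the Poisson law on $H$, we have $X' \overset{d}{=} X$, and $X'$ is independent of $X \cap \{0 < x_d < t\}$ (disjoint restrictions of Poisson are independent). I would then write
\[
s_X((r+t) e_d) - s_X(r e_d) - s_X(t e_d) = A + B,
\]
where $A := s_X((r+t) e_d) - s_X(t e_d) - s_{X'}(r e_d)$ is a ``geometric'' error and $B := s_{X'}(r e_d) - s_X(r e_d)$ is a ``distributional'' error.

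The term $B$ is handled by the fluctuation bound. Since $X' \overset{d}{=} X$, both $s_{X'}(re_d)$ and $s_X(re_d)$ share the common mean $\mu := \E s_X(re_d)$. I would apply \lref{fluct} at scale $L = Cr$, combined with \lref{periodize} to pass from the periodized clouds to the non-periodic ones, and integrate the resulting tail bound (using the exponent $t^{2/3}$, which is integrable) to obtain $\E|s_X(re_d) - \mu| \leq C(\log r)^3 r^{1/2}$, and similarly for $X'$. The triangle inequality then gives $\E|B| \leq C(\log r)^3 r^{1/2}$.

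The crux is to show $|A| \leq C$ off a rare event of probability $\leq e^{-cr}$, which contributes negligibly in expectation. I would establish the two-sided bound
\[
s_X(te_d) + s_{X'}(re_d) - C \;\leq\; s_X((r+t)e_d) \;\leq\; s_X(te_d) + s_{X'}(re_d) + C
\]
by applying \lref{monotone} to two carefully constructed decreasing semiconvex sequences. For the upper direction, concatenate the first $s_X(te_d)$ peels $S_k(X)$ with lifted peels $S_j(X') + te_d$ of the top cloud, suitably joined to $S_{s_X(te_d)}(X)$ at the interface $x_d = t$. Since $(r+t)e_d = re_d + te_d$ lies in $S_j(X') + te_d$ for every $j \leq s_{X'}(re_d)$, as well as in the first $s_X(te_d)$ peels of $X$, summing indicators via \lref{monotone} yields $s_X((r+t)e_d) \leq s_X(te_d) + s_{X'}(re_d)$. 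The lower direction is dual: restricting the peeling of $X$ above height $t$ and shifting down by $te_d$ produces a valid peeling of $X'$ whose depth at $re_d$ is at least $s_X((r+t)e_d) - s_X(te_d)$.

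The main obstacle is reconciling the parabolic complement structure of semiconvexity with the flat interface at $x_d = t$: the lifted peels $S_j(X') + te_d$ are naturally semiconvex with ``ground at $t$,'' whereas the first $s_X(te_d)$ peels use ground at $0$. Making the spliced sets genuinely semiconvex in $\overline H$ requires extending each parabolic cap $(y - P)$ in the complement of $S_j(X') + te_d$ all the way down through the strip $\{0 < x_d < t\}$, and then verifying that these extensions do not pick up points of $X \cap \{x_d < t\}$ beyond those already exhausted by the bottom peels. I expect this is accomplished by first invoking \lref{local} to confine the relevant geometry to a cylinder $Q_{C(r+t)}$ around the vertical axis (at negligible cost), and then controlling the extensions using the tail bounds \lref{lower} and \lref{upper} at the interface to absorb the $O(1)$ boundary error.
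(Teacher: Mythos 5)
Your reduction of the term $B$ to the fluctuation bound is fine, but the claim that the splice error $A$ is $O(1)$ off an exponentially rare event is a genuine gap, and it is precisely where the lemma's $r^{1/2}$ comes from. To apply \lref{monotone} to a concatenated sequence of semiconvex sets, every point of $X$ must lie on the boundary of some set in the sequence (or in their intersection). If you switch from the peels of $X$ to the lifted peels of $X'$ after $N$ steps, the points of $X$ in the strip $\{0<x_d<t\}$ that have not yet been consumed by peel $N$ are covered by no later boundary, so for the upper bound you are forced to take $N \geq n^+ := \sup s_X$ over the interface hyperplane, while the lower bound forces $N \leq n^- := \inf s_X$ over it. The peeling front is not flat: $n^+-n^-$ is the oscillation of the height function across the hyperplane, and its expectation is of order $(\log\cdot)^3(\mathrm{height})^{1/2}$ --- this is exactly what \lref{fluct} (after periodizing, which supplies the a priori Lipschitz regularity of the peels needed to take a sup over the hyperplane) is used to control in the paper's proof. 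No extension of parabolic caps through the strip or appeal to the tail bounds at the interface removes this; there is no event of probability $1-e^{-cr}$ on which the splice is exact up to a constant, and if there were, the lemma (and the resulting concentration) would be far stronger than the $r^{1/2}$ scale the paper obtains.

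A secondary issue: you splice at height $t$, so even the corrected error $n^+-n^-$ would be of order $(\log t)^3 t^{1/2}$, which exceeds the claimed bound $C(\log r)^3 r^{1/2}$ when $t\gg r$ (the regime actually used in the proof of \tref{semiconvex}). The paper splices at height $r$, the first argument, precisely so that the interface fluctuation involves only $r$. With these two corrections --- splice at height $r$, and bound the splice error by $\E[n^+-n^-]$ via periodization and \lref{fluct} rather than by $O(1)$ --- your argument becomes essentially the paper's.
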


\begin{proof}
  By \lref{periodize} and \lref{upper}, we may assume that $X = X^L$ is $(L \Z^{d-1} \times \{ 0 \})$-periodic for some $L = C r^2$.  Consider the quantities
  \begin{equation*}
    n^- = \inf_{\R^{d-1} \times \{ r \}} s_X
    \quad \mbox{and} \quad
    n^+ = \sup_{\R^{d-1} \times \{ r \}} s_X.
  \end{equation*}
  Using the a priori $C r^2$-Lipschitz regularity of $\partial S_n(X)$ and the fluctuation bounds in \lref{fluct}, we obtain
  \begin{equation*}
    0 \leq \E[n^+ - n^-] \leq C (\log r)^3 r^{1/2}.
  \end{equation*}
  Note that $S_{n^+}(X) \subseteq \R^{d-1} \times (r,\infty) \subseteq S_{n^-}(X)$.  We use this to define two peelings:
  \begin{equation*}
    S^-_n = \begin{cases}
      S_n(X) & \mbox{if } n \leq n^- \\
      S_{n - n^-}(X \cap \R^{d-1}\times (r,\infty)) & \mbox{if } n > n^-
    \end{cases}
  \end{equation*}
  and      
  \begin{equation*}
    S^+_n = \begin{cases}
      S_n(X) & \mbox{if } n \leq n^+ \\
      S_{n^+}(X) \cap S_{n - n^+}(X \cap \R^{d-1}\times (r,\infty)) & \mbox{if } n > n^+.
    \end{cases}
  \end{equation*}
  Using the monotonicity from \lref{monotone}, we conclude
  \begin{equation*}
    \sum \id_{S^-_n} \leq s_X \leq \sum \id_{S^+_n}.
  \end{equation*}
  This implies
  \begin{equation*}
    n^- + s_{X \cap \R^{d-1} \times (r,\infty)}( (r+t) e_d) \leq s_X( (r+t) e_d) \leq n^+ + s_{X \cap \R^{d-1} \times (r,\infty)}( (r+t) e_d).
  \end{equation*}
  Since
  \begin{equation*}
    \E s_{X \cap \R^{d-1}\times (r,\infty)}( (r+t)e_d) = \E s_X(t e_d),
  \end{equation*}
  taking expectations yields the lemma.
\end{proof}
\nc

\subsection{Fluctuations}

We prove our main theorem about semiconvex peeling.

\begin{theorem}
  \label{t.semiconvex}
  There is a constant $\alpha > 0$ such that, if $X \sim Poisson(\id_H)$ and $r \geq t \geq 1$, then
  \begin{equation*}
    s_{X \cap Q_{Cr}} \leq s_X \leq s_{X \cup (H \setminus Q_{Cr})},
  \end{equation*}
  \begin{equation*}
    \P\left[\inf_{\partial^+ Q_r} s_{X \cap Q_{Cr}} \leq \alpha r - (\log r)^3 r^{1/2} t\right] \leq C \exp( - c t^{2/3}),
  \end{equation*}
  and
  \begin{equation*}
    \P\left[\sup_{\partial^+ Q_r} s_{X \cup (H \setminus Q_{Cr})} \geq \alpha r + (\log r)^3 r^{1/2} t\right] \leq C \exp( - c t^{2/3}).
  \end{equation*}
\end{theorem}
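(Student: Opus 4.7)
The first inequality follows from \lref{monotone} applied twice. For the two tail bounds, my plan is to (i) extract a limit constant $\alpha$ from the almost-additivity in \lref{additivity}, (ii) concentrate the height at the single vertical ray through $re_d$, (iii) upgrade to a uniform estimate over the entire cap $\partial^+ Q_r$, and (iv) pass from the periodized problem to the non-periodized one via \lref{local} and \lref{periodize}.

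For step (i), set $f(r) = \E s_X(re_d)$; taking expectations in \lref{additivity} gives $|f(r+t) - f(r) - f(t)| \leq C(\log\max(r,t))^3 \max(r,t)^{1/2}$. A Fekete-type dyadic iteration (the doubling errors are geometrically summable) shows that $f(2^k r_0)/(2^k r_0)$ is Cauchy with limit independent of $r_0$, defining $\alpha := \lim_{r \to \infty} r^{-1} f(r)$ together with the deterministic rate $|f(r) - \alpha r| \leq C(\log r)^3 r^{1/2}$.

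For step (ii), take $L = Cr$ in \lref{periodize}: off an event of probability $\leq e^{-r}$, we have $s_X(re_d) = s_{X^L}(re_d)$. Applying \lref{fluct} with this $L$ yields
\[\P[|s_{X^L}(re_d) - \E s_{X^L}(re_d)| \geq (\log r)^3 r^{1/2} t] \leq C e^{-ct^{2/3}}.\]
Combining with (i), and controlling the gap $|\E s_X(re_d) - \E s_{X^L}(re_d)|$ via the disagreement estimate from \lref{periodize} together with the crude upper tail \lref{upper}, gives concentration of $s_X(re_d)$ around $\alpha r$ at the same scale. Step (iii) is the main obstacle. By $(\R/L\Z)^{d-1}$-translation invariance of $X^L$, the mean $\E s_{X^L}(y)$ depends only on $y_d$, so the single-point concentration from (ii) is centered at $\alpha r$ for every $y \in \partial^+ Q_r$. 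Cover $\partial^+ Q_r$ by a net of $\leq r^C$ points at horizontal scale $r^{-C}$ and union-bound the single-point estimate at each net point (applied at an inflated scale of order $t + C(\log r)^{3/2}$ so that the Gaussian-type tail absorbs the $r^C$ factor). To interpolate between net points, use that each peel boundary $\partial S_n(X^L)$ is an $L\Z^{d-1}$-periodic $CL$-Lipschitz graph (exactly as in the proof of \lref{fluct}), together with a high-probability upper bound on the local vertical density of peels near each $y_k$, derived by applying \lref{upper} along a vertical ray through $y_k$. This is the tail-bound analogue of the expected-value estimate $\E[n^+ - n^-] \leq C(\log r)^3 r^{1/2}$ used in the proof of \lref{additivity}; reconciling the union-bound penalty with the $t$-independent Lipschitz-plus-density interpolation error is where the technical work lies.

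Finally, step (iv) invokes \lref{local} to identify $s_{X^L}$ with both $s_{X \cap Q_{Cr}}$ and $s_{X \cup (H \setminus Q_{Cr})}$ on $\partial^+ Q_r$ off an event of probability $\leq e^{-r}$; combining with step (iii) gives both claimed tail bounds.
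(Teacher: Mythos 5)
Your proposal is correct and follows essentially the same route as the paper: define $\alpha$ and the rate $|\E s_X(re_d)-\alpha r|\leq C(\log r)^3 r^{1/2}$ from \lref{additivity} via a Fekete/dyadic argument, combine \lref{periodize} with \lref{fluct} for pointwise concentration, and finish with a union bound over $\partial^+ Q_r$. The only difference is that you spell out the net/interpolation step (Lipschitz peel boundaries plus a vertical density bound from \lref{upper}) that the paper compresses into ``a union bound over polynomially many points,'' which is a fair and necessary elaboration rather than a different approach.
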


\begin{proof}
\red
Define $\alpha>0$ by 
\[\alpha = \liminf_{r\to \infty} \frac{1}{r}\E[s_X(re_d)].\]
By \lref{lower} and \lref{upper} we have $0 < \alpha < \infty$. We claim that 
\begin{equation}\label{e.limit}
|\E[s_X(r e_d)] - \alpha r| \leq C (\log r)^3 r^{1/2}.
\end{equation}
The proof of \eref{limit} is split into two parts.

1. We first show that 
\begin{equation}\label{e.alphalim}
\alpha = \lim_{r\to \infty} \frac{1}{r}\E[s_X(re_d)].
\end{equation}
To see this, define $g(r) = \E[S_X(re_d)] + r^{3/4}$. Applying \lref{additivity} with $t\geq r \geq C$ we deduce
\begin{align*}
g(t+r) &= g(t) + g(r) + \E[S_X( (t+r)e_d) - S_X(re_d) - S_X(te_d)]\\
&\hspace{2in} +(t+r)^{3/4}- r^{3/4} - t^{3/4}  \\
&\leq g(t) + g(r) + C(\log r)^3r^{1/2} +(t+r)^{3/4} - r^{3/4} - t^{3/4} \\
&\leq g(t) + g(r) + C(\log r)^3 r^{1/2} - r^{3/4}(1 - \tfrac34 (r/t)^{1/4})\\
&\leq g(t) + g(r) + C(\log r)^3 r^{1/2} - \frac14 r^{3/4}\leq g(t)+g(r)
\end{align*}
for $C$ sufficiently large. It follows that for any $r\geq C$ and $n\in \N$ we have $g(nr) \leq ng(r)$. Now, let $\eps>0$ and choose $r_0\geq C$ so that $g(r_0) \leq (\alpha + \eps)r_0$. Let $r>2r_0$ and write $r = nr_0 + k$ where $n\in \N$ and $r_0 \leq k \leq 2r_0$. We have
\[g(r) = g(nr_0 + k) \leq g(nr_0) + g(k) \leq ng(r_0) + g(k)\leq (\alpha+\eps)nr_0 + g(k).\]
It follows that $\limsup_{r\to \infty} g(r)/r \leq \alpha + \eps$, which completes the proof of \eref{alphalim}.

2. We now prove \eref{limit}. We note that \lref{additivity} with $t=r\geq C$ yields
\[\E[|s_X( 2re_d) - 2s_X(r e_d)|] \leq C (\log r)^3 r^{1/2}.\]
Thus, for any $k\geq 1$ and $r\geq C$ we have
\begin{align*}
\E[|s_X(2^kre_d) - 2^ks_X(re_d)|]&=\E\left[ \left|\sum_{i=1}^k 2^{k-i}s_X(2^ire_d) - 2^{k-i+1}s_X(2^{i-1}re_d)  \right|\right]\\
&\leq \sum_{i=1}^k 2^{k-i}\E[|s_X(2^ire_d) - 2s_X(2^{i-1}re_d)|]\\
&\leq C \sum_{i=1}^k 2^{k-i}(\log(2^{i-1}r))^3 (2^{i-1}r)^{1/2}\\
&\leq C(\log r)^3 2^k r^{1/2}.
\end{align*}
Therefore,
\begin{align*}
|\E[s_X(r e_d)] - \alpha r| &= |\E[s_X(re_d) - 2^{-k}s_X(2^kre_d) + 2^{-k}s_X(2^kre_d) -\alpha r] |\\
&\leq C(\log r)^3 r^{1/2}+  |\E[2^{-k}s_X(2^kre_d)] -\alpha r |.
\end{align*}
Sending $k\to \infty$ and invoking \eref{alphalim} completes the proof of \eref{limit}.

 \nc 
  We now apply \lref{periodize} and \lref{fluct} to find that
  \begin{equation*}
    \P\left[s_{X \cap Q_{Cr}}(r e_d) \leq \alpha r- (\log r)^3 r^{1/2} t\right] \leq C \exp( - c t^{2/3})
  \end{equation*}
  and
  \begin{equation*}
    \P\left[s_{X \cup (H \setminus Q_{Cr})}(r e_d) \geq \alpha r + (\log r)^3 r^{1/2} t\right] \leq C \exp( - c t^{2/3}).
  \end{equation*}
  A union bound over polynomially many points in $\partial^+ Q_r$ yields the theorem.
\end{proof}

\section{Viscosity Solutions}
\label{s.viscosity}

\subsection{Existence and uniqueness}

We now discuss the basic theory of the limiting equation \eref{bvp}.  We assume the reader is familiar with \mycite{Crandall-Ishii-Lions}.  We use viscosity solutions to interpret the non-linear partial differential equation
\begin{equation}
  \label{e.pde}
  \langle Dh, \cof(-D^2 h) Dh \rangle = f^2 \quad \mbox{in } U,
\end{equation}
where $U \subseteq \R^d$ is open and bounded and $f \in C(U)$ is non-negative.

While the left-hand side of \eref{pde} is not elliptic for general functions, it is elliptic on the set of quasi-concave functions.  That is, the functions $u$ whose super level set $\{ u > k \}$ is convex for all $k \in \R$.  This is a natural class of functions for our study.  In order to use standard viscosity machinery, we modify the operator outside the domain of ellipticity.

\begin{lemma}
  \label{l.Fdef}
  The function $F : \R^d \times \R^{d \times d}_{sym} \to \R$ defined by
  \begin{equation*}
    F(p,A) = \begin{cases}
      \langle p, \cof(-A) p \rangle & \mbox{if } \langle q, p \rangle = 0 \Rightarrow \langle q, A q \rangle \leq 0  \\
      0 & \mbox{otherwise}
    \end{cases}
  \end{equation*}  
  is continuous.  If $p \in \R^d$, $A, B \in \R^{d \times d}_{sym}$, and $A \leq B$, then $F(p,A) \geq F(p,B)$.  If $p \in \R^d$, $A \in \R^{d \times d}_{sym}$ and $B \in \R^{d \times d}$, then $F(B^t p, B^t A B) = \det(B)^2 F(p, A)$.
\end{lemma}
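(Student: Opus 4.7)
My approach is built on the identity
\[
\langle p, \cof(-A) p \rangle = |p|^2 \det(-A|_{p^\perp}),
\]
valid for $p \neq 0$ and $A \in \R^{d \times d}_{sym}$, where $A|_{p^\perp}$ denotes the compression of $A$ to the hyperplane $p^\perp$, a $(d-1)\times(d-1)$ symmetric operator. One proves this by working in an orthonormal basis whose last vector is $p/|p|$, so that $\langle p, \cof(-A) p \rangle = |p|^2 \cof(-A)_{dd}$, and the $(d,d)$-entry of the cofactor matrix is exactly $\det(-A|_{p^\perp})$. In this form the condition ``$\langle q, p \rangle = 0 \Rightarrow \langle q, Aq\rangle \leq 0$'' is just $-A|_{p^\perp} \geq 0$, and one may write $F(p,A) = |p|^2 \det(-A|_{p^\perp})$ when $-A|_{p^\perp} \geq 0$ and $F(p,A) = 0$ otherwise. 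This compressed form reduces all three claims to routine arguments.

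\textbf{Continuity.} The map $(p, A) \mapsto A|_{p^\perp}$ is continuous for $p \neq 0$, and $F$ extends continuously to $p = 0$ thanks to the $|p|^2$ prefactor together with the bound $|\det(-A|_{p^\perp})| \leq C\|A\|^{d-1}$. Away from the locus where $-A|_{p^\perp}$ has a zero eigenvalue the sign condition is locally constant, and $F$ agrees with one of two continuous formulas there. At a boundary point $-A|_{p^\perp}$ has a zero eigenvalue, forcing $F(p,A) = |p|^2 \det(-A|_{p^\perp}) = 0$; for nearby $(p',A')$, either $-A'|_{p'^\perp}$ remains positive semidefinite (and $F(p',A')$ is close to the continuous determinant, which is small) or it becomes indefinite (and $F(p',A') = 0$), so $F(p',A') \to 0$.

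\textbf{Monotonicity.} If $A \leq B$, then $-A|_{p^\perp} \geq -B|_{p^\perp}$. If $B$ satisfies the sign condition, so does $A$, and monotonicity of $\det$ on ordered positive semidefinite matrices yields $F(p, A) \geq F(p, B)$. If $B$ fails the condition, $F(p, B) = 0 \leq F(p, A)$, since $F$ is always nonnegative by construction.

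\textbf{Transformation.} The key algebraic input is the adjugate identity $\cof(B) B^t = \det(B) I$, combined with the standard identities $\cof(XY) = \cof(X) \cof(Y)$, $\cof(X^t) = \cof(X)^t$, and $\cof(-X) = (-1)^{d-1} \cof(X)$. Two applications of the last identity cancel, giving $\cof(-B^t A B) = \cof(B)^t \cof(-A) \cof(B)$, whence
\[
\langle B^t p, \cof(-B^t A B) B^t p \rangle = \langle \cof(B) B^t p, \cof(-A) \cof(B) B^t p \rangle = \det(B)^2 \langle p, \cof(-A) p \rangle.
\]
When $B$ is invertible, the substitution $q' = Bq$ shows the sign conditions on $(p, A)$ and $(B^t p, B^t A B)$ are equivalent, so the identity passes through to $F$. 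When $B$ is singular, $\det(B) = 0$ makes the right side zero, and $\cof(B) B^t = 0$ makes the cofactor expression vanish on the left, so $F(B^t p, B^t A B) = 0 = \det(B)^2 F(p,A)$ regardless of which sign conditions hold. I expect no serious obstacle; the only subtleties are the bookkeeping of the $(-1)^{d-1}$ factors and handling the singular case separately from the invertible one.
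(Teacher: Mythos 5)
Your proof is correct and follows essentially the same route as the paper: both rest on the identity $\langle p,\cof(-A)p\rangle = |p|^2\det(-A|_{p^\perp})$, with continuity coming from the vanishing of this determinant on the boundary of the constraint set and monotonicity from the monotonicity of the determinant on ordered positive semidefinite matrices. The only (minor) divergence is in the transformation property, where the paper reduces to invertible $A,B$ by continuity and applies $\cof(M)=\det(M)M^{-1}$, while you use multiplicativity of the cofactor to handle singular $B$ directly — a slightly cleaner variant of the same computation.
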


\begin{proof}
  When $p \neq 0$, the expression $\langle p, \cof(-A) p \rangle$ computes the determinant of $-A$ restricted to the subspace $p^\perp = \{ q \in \R^d : \langle q, p \rangle = 0 \}$.  Observe that this determinant is zero on the boundary of the set where the constraint $\langle q, p \rangle \Rightarrow \langle q, A q \rangle \leq 0$ holds.  It follows that $F$ is continuous.  Since $A$ is non-positive on $p^\perp$ when the constraint holds, it follows that $F$ is non-increasing in $A$.  For the last property, we use the continuity of $F$ to assume that $A, B$ are invertible.  We compute
  \begin{equation*}
    \begin{aligned}
      \langle B^t p, \cof(-B^t A B) B^t p \rangle
      & = \langle B^t p, \det(-B^t A B) (-B^t A B)^{-1} B^t p \rangle \\
      & = \det(B)^2 \langle p, \det(-A) (-A)^{-1} p \rangle \\
      & = \det(B)^2 \langle p, \cof(-A) p \rangle.
    \end{aligned}
  \end{equation*}
  Similarly, we see that the condition $\langle q, p \rangle = 0 \Rightarrow \langle q, A q \rangle \leq 0$ is equivalent to the condition $\langle q, B^t p \rangle = 0 \Rightarrow \langle q, B^t A B q \rangle \leq 0$.
\end{proof}

We obtain comparison when $f$ is positive by Ishii's lemma.

\begin{theorem}
  \label{t.comparison}
  If $U \subseteq \R^d$ is open and bounded, $f \in C(\overline U)$ satisfies $f > 0$ on $\overline U$, and $u \in USC(\overline U)$ and $v \in LSC(\overline U)$ are, respectively, a viscosity subsolution and supersolution of
  \begin{equation*}
    F(Dh, D^2 h) = f^2 \quad \mbox{in } U,
  \end{equation*}
  then $\max_{\overline U} (u - v) = \max_{\partial U} (u - v)$.
\end{theorem}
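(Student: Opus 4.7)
The plan is to follow the standard viscosity comparison recipe: (i) use the homogeneity of $F$ together with the positivity of $f$ to perturb $u$ to a strict subsolution, and (ii) run a doubling-of-variables argument with Ishii's lemma to extract a contradiction from an interior maximum of $u-v$.

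For Step (i), set $u_\eps := (1+\eps) u$. A direct computation from the formula defining $F$ in \lref{Fdef} gives the scalar homogeneity
\[
F(\lambda p,\lambda A) \;=\; \lambda^{d+1}\,F(p,A),\qquad \lambda>0,
\]
so that in the viscosity sense
\[
F(Du_\eps,D^2 u_\eps)\;\geq\;(1+\eps)^{d+1}f^2\;\geq\;f^2+c_\eps\quad\text{on }U,
\]
with $c_\eps := ((1+\eps)^{d+1}-1)\min_{\overline U}f^2>0$; this is the one place the hypothesis $f>0$ on the compact set $\overline U$ is used. Since $\|u_\eps-u\|_\infty\leq C\eps$, it suffices to prove $\max_{\overline U}(u_\eps-v)=\max_{\partial U}(u_\eps-v)$ for every small $\eps>0$ and then let $\eps\downarrow 0$.

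For Step (ii), suppose for contradiction that $\max_{\overline U}(u_\eps-v)$ strictly exceeds $\max_{\partial U}(u_\eps-v)$. For $\lambda\geq 1$ let $(x_\lambda,y_\lambda)\in\overline U\times\overline U$ maximize $\Phi_\lambda(x,y):=u_\eps(x)-v(y)-\tfrac{\lambda}{2}|x-y|^2$. Standard arguments \cite{Crandall-Ishii-Lions} give $x_\lambda,y_\lambda\to\bar x$ for some interior maximizer $\bar x\in U$ and $\lambda|x_\lambda-y_\lambda|^2\to 0$. Ishii's lemma produces $p_\lambda:=\lambda(x_\lambda-y_\lambda)$ and symmetric matrices $X_\lambda\leq Y_\lambda$ with $(p_\lambda,X_\lambda)\in\overline{J^{2,+}}u_\eps(x_\lambda)$, $(p_\lambda,Y_\lambda)\in\overline{J^{2,-}}v(y_\lambda)$, and the full matrix inequality bounding $\mathrm{diag}(X_\lambda,-Y_\lambda)$ by a multiple of $\bigl(\begin{smallmatrix}I&-I\\-I&I\end{smallmatrix}\bigr)$. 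The viscosity inequalities read
\[
F(p_\lambda,X_\lambda)\geq f(x_\lambda)^2+c_\eps,\qquad F(p_\lambda,Y_\lambda)\leq f(y_\lambda)^2,
\]
and the first forces $p_\lambda\neq 0$ and $-X_\lambda|_{p_\lambda^\perp}$ strictly positive definite with a uniform quantitative bound.

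The main obstacle is that $F$ is only degenerate elliptic: the monotonicity $X_\lambda\leq Y_\lambda\Rightarrow F(p_\lambda,X_\lambda)\geq F(p_\lambda,Y_\lambda)$ is by itself not enough to contradict the positive gap $c_\eps$. The remedy is to restrict attention to the constraint cone on which $F$ admits the explicit Monge--Amp\`ere-type form $F(p,A)=|p|^2\det(-A|_{p^\perp})$, smooth and \emph{strictly} monotone in $A|_{p^\perp}$ on positive definite matrices. One checks, using the uniform positive definiteness of $-X_\lambda|_{p_\lambda^\perp}$ and the Ishii matrix inequality tested along diagonal vectors $(\xi,\xi)$, that the constraint also holds for $Y_\lambda$ along the relevant subspace in the limit, and that $Y_\lambda|_{p_\lambda^\perp}-X_\lambda|_{p_\lambda^\perp}\to 0$ as $\lambda\to\infty$; strict monotonicity of $\det$ on the PSD cone then forces $F(p_\lambda,X_\lambda)-F(p_\lambda,Y_\lambda)\to 0$, contradicting
\[
c_\eps\;\leq\;f(y_\lambda)^2-f(x_\lambda)^2+F(p_\lambda,X_\lambda)-F(p_\lambda,Y_\lambda)\;\longrightarrow\;0
\]
as $\lambda\to\infty$ by continuity of $f$. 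Making the passage to the constraint cone rigorous (and ruling out the degenerate case in which $-Y_\lambda|_{p_\lambda^\perp}$ would fail to be PSD along a subsequence) is the principal technical point, and can alternatively be handled via sup-/inf-convolution smoothing combined with Alexandrov's a.e.\ twice-differentiability theorem, in the spirit of the classical Monge--Amp\`ere comparison principle.
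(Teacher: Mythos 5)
Your overall strategy---use the degree-$(d+1)$ homogeneity of $F$ together with $\min_{\overline U} f>0$ to reduce to strict comparison, then run doubling of variables with Ishii's lemma---is exactly the paper's. The problem is that your viscosity inequalities are reversed, and this is not a harmless bookkeeping choice: it manufactures the ``main obstacle'' you then wrestle with, which does not exist in the correct setup. Since $F$ is non-increasing in the Hessian (\lref{Fdef}), the only convention under which $F(Dh,D^2h)=f^2$ is degenerate elliptic is: a subsolution satisfies $F(p,X)\le f(x)^2$ for $(p,X)\in \overline{J^{2,+}}u(x)$, and a supersolution satisfies $F(p,X)\ge f(x)^2$ for $(p,X)\in \overline{J^{2,-}}v(x)$. (This is also what the rest of the paper uses: the barrier of \lref{barrier}, with $F(D\psi,D^2\psi)\ge 1$, serves as a \emph{super}solution and upper bound in \tref{existence} and \cref{holder}, while $0$, with $F=0\le f^2$, is the subsolution.) Under your reversed convention a classical solution would fail to be a subsolution of its own equation. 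Your perturbation then also goes the wrong way: with the correct signs, $(1+\ep)u$ only satisfies $F\le(1+\ep)^{d+1}f^2$, i.e.\ it is a subsolution of an equation with a \emph{larger} right-hand side, which buys nothing; one must take $\tau u$ with $\tau<1$, or (as the paper does) replace $v$ by $\tau v$ with $\tau>1$, which is a strict supersolution because $F\ge\tau^{d+1}f^2\ge f^2+\delta$ with $\delta>0$ coming from $f>0$ on $\overline U$.

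Once the signs are fixed, the contradiction is immediate and none of the machinery in your final paragraph is needed: Ishii's lemma gives $X_\lambda\le Y_\lambda$, hence $F(p_\lambda,X_\lambda)\ge F(p_\lambda,Y_\lambda)$ by \lref{Fdef}, and the chain
\[
f(y_\lambda)^2+\delta\;\le\;F(p_\lambda,Y_\lambda)\;\le\;F(p_\lambda,X_\lambda)\;\le\;f(x_\lambda)^2
\]
contradicts $f(x_\lambda)^2-f(y_\lambda)^2\to0$. By contrast, the argument you propose to close your (artificial) gap does not hold up on its own terms: the Ishii matrix inequality only yields $X_\lambda\le Y_\lambda$ (testing with $(\xi,\xi)$) together with bounds of order $\lambda$; it does not give $Y_\lambda|_{p_\lambda^{\perp}}-X_\lambda|_{p_\lambda^{\perp}}\to0$, so the appeal to strict monotonicity of the determinant on the positive cone is unjustified as stated.
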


\begin{proof}
  Let us suppose for contradiction that the conclusion fails.  In this case, we may choose $\tau > 1$ and $\ep > 0$ such that $\max_{\overline U} (u - \tau v) = \ep + \max_{\partial U} (u - \tau v)$.  Note that $\tau v$ is a viscosity subsolution of
  \begin{equation*}
    F(Dh, D^2 h) = \tau^{d+1} f^2 \quad \mbox{in } U.
  \end{equation*}
  Since $f > 0$ on the closed set $\overline U$, there is a $\delta > 0$ such that $\tau^{d+1} f^2 \geq \delta + f^2$ on $\overline U$.  We now need only prove strict comparison; see \mycite{Crandall-Ishii-Lions}.
\end{proof}

\begin{remark}
  The above comparison result holds without imposing any quasi-concavity hypothesis on $u$ or $v$.  This works because the positivity of $f$ forces the supersolution $u$ to be quasi-concave; see \mycite{Barron-Goebel-Jensen}.  We expect that comparison theorem holds for $f \geq 0$ when the supersolution $u$ is quasi-concave.  This would require a deeper adaptation of the viscosity tools.
\end{remark}

To prove existence of solutions to our boundary value problem \eref{bvp}, we need barrier functions to show that the boundary values are attained.  Since the zero function is a subsolution, we need only obtain upper barriers.

\begin{lemma}
  \label{l.barrier}
  The function
  \begin{equation*}
    \psi(x) = 2 x_d^{\frac{2}{d+1}}(1 - \tfrac12 |x^d|^2)^{\frac{d-1}{d+1}},
  \end{equation*}
  where
  \begin{equation*}
    x^d = (x_1, ..., x_{d-1}),
  \end{equation*}
  satisfies
  \begin{equation}
    \label{e.barrier}
    \begin{cases}
      F(D \psi, D^2 \psi) \geq 1 & \mbox{in } B_1 \cap \{ x_d > 0 \} \\
      \psi \geq 0 & \mbox{on } \overline B_1 \cap \{ x_d \geq 0 \} \\
      \psi = 0 & \mbox{on } B_1 \cap \{ x_d = 0 \}.
    \end{cases}
  \end{equation}
\end{lemma}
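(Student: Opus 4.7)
The boundary conditions in \eref{barrier} are immediate from the formula for $\psi$: on $\overline{B_1} \cap \{x_d \geq 0\}$ we have $|x^d|^2 \leq 1 < 2$, so $1 - \tfrac12 |x^d|^2 > 0$, and hence $\psi \geq 0$ with equality exactly on $\{x_d = 0\}$. My plan for the substantive PDE inequality $F(D\psi, D^2\psi) \geq 1$ in $U := B_1 \cap \{x_d > 0\}$ is a direct computation, exploiting rotational symmetry about the $e_d$-axis to reduce to points of the form $x = (r, 0, \ldots, 0, y)$ with $r \geq 0$, $y > 0$, $r^2 + y^2 < 1$. Throughout I set $v := 1 - \tfrac12 r^2$, $\beta := \tfrac{2(d-1)}{d+1}$, and $\gamma := \tfrac{4}{d+1}$.

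First I would compute $D\psi$ and $D^2\psi$ at such $x$. The gradient $D\psi$ lies in $\operatorname{span}(e_1, e_d)$, and the Hessian has a block structure: a scalar block with eigenvalue $a := -\beta\, y^{2/(d+1)} v^{-2/(d+1)} < 0$ on $\operatorname{span}(e_2, \ldots, e_{d-1})$, together with a symmetric $2\times 2$ block $M$ in the $(e_1, e_d)$ plane whose three entries all factor as $a$ times an explicit positive quantity. A short computation gives $\det(-M) = 2 a^2 v/((d+1) y^2) > 0$, so $-D^2\psi$ is positive definite. Strict positivity of $-D^2\psi$ automatically verifies the quasi-concavity constraint in the definition of $F$ from \lref{Fdef}, so $F(D\psi, D^2\psi)$ coincides with the algebraic formula $\langle D\psi, \cof(-D^2\psi) D\psi\rangle$.

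Next I would compute the cofactor pairing. Because $D\psi$ is supported in the two-plane $\operatorname{span}(e_1, e_d)$ and $D^2\psi$ has block structure, the pairing reduces to a $2\times 2$ computation multiplied by the overall factor $(-a)^{d-2}$ from the $(d-2)$-dimensional block. The algebra is somewhat lengthy but collapses cleanly thanks to the identity
\begin{equation*}
\beta + \gamma = \frac{2(d-1) + 4}{d+1} = 2,
\end{equation*}
which gathers several cross terms into the square $(\beta + \gamma)^2 = 4$. Carrying out this calculation yields the closed form
\begin{equation*}
F(D\psi, D^2\psi) = \frac{2^{d+3} (d-1)^{d-1}}{(d+1)^{d+1}} \left( 1 + \frac{(d+1) r^2}{2 v} \right).
\end{equation*}

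Since the parenthetical factor is at least $1$, the lemma reduces to the dimensional inequality $C_d := 2^{d+3} (d-1)^{d-1}/(d+1)^{d+1} \geq 1$. Direct computation gives $C_2 = 32/27 > 1$ and $C_3 = 1$; for $d \geq 3$, differentiating $\log C_d$ in $d$ gives $\log(2(d-1)/(d+1)) \geq 0$, so $C_d$ is non-decreasing and hence $\geq C_3 = 1$. The main obstacle in the plan is simply organizing the algebra of the $2\times 2$ block and its cofactor cleanly; the identity $\beta + \gamma = 2$ is the key observation that keeps the computation manageable and produces the clean factorization above.
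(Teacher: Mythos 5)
Your computation is correct, but it takes a genuinely different route from the paper. The paper avoids essentially all calculus by writing $\psi$ as a lower envelope: for each $t>0$ the function $\psi_t(x) = t^{1-d}x_d + t^2(1-\tfrac12|x^d|^2)$ is an \emph{exact} classical solution of $F(D\psi_t,D^2\psi_t)=1$ (its Hessian is $\mathrm{diag}(-t^2 I_{d-1},0)$, whose cofactor pairing with the gradient is $t^{2(d-1)}\cdot t^{2(1-d)}=1$), and $\psi$ is the pointwise infimum over $t$ of these; degenerate ellipticity of $F$ then makes the infimum a viscosity supersolution, and smoothness of $\psi$ on $\{x_d>0\}$ upgrades this to the classical inequality. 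Your approach instead verifies the inequality by brute force, and I have checked the key intermediate quantities: the Hessian block structure, $\det(-M)=2a^2v/((d+1)y^2)$, and the closed form $F(D\psi,D^2\psi)=C_d\bigl(1+\tfrac{(d+1)r^2}{2v}\bigr)$ with $C_d=2^{d+3}(d-1)^{d-1}/(d+1)^{d+1}$ all come out as you state (the cancellation you attribute to $\beta+\gamma=2$ is, in the exponents $p=\tfrac{2}{d+1}$, $q=\tfrac{d-1}{d+1}$, the identity $p+q=1$), and your monotonicity argument for $C_d\ge 1$ with equality at $d=3$ is sound. What each approach buys: the paper's is short, conceptual, and generalizes to any envelope of exact solutions, but only yields $F\ge 1$; yours is longer and requires the separate dimensional inequality $C_d\ge 1$, but produces the exact value of $F(D\psi,D^2\psi)$, showing in particular that $\psi$ is a \emph{strict} supersolution away from the axis and, for $d\ne 3$, everywhere.
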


\begin{proof}
  For $t > 0$, consider the function
  \begin{equation*}
    \psi_t(x) = t^{1-d} x_d + t^2 (1 - \tfrac12 |x^d|^2).
  \end{equation*}
  Observe that $\psi_t$ satisfies \eref{barrier} classically.  Compute
  \begin{equation*}
    \psi(x) = \inf_{t > 0} \psi_t(x) = \psi_{t(x)}(x),
  \end{equation*}
  where
  \begin{equation*}
    t(x) = x_d^{1/(d+1)}(1 - \tfrac12 |x^d|^2)^{-1/(d+1)}
  \end{equation*}
  Since $\psi$ is continuous, the ellipticity of $F$ implies that $\psi$ satisfies \eref{barrier} in the sense of viscosity.  Since $\psi$ is smooth in $B_1 \cap \{ x_d > 0 \}$, it also satisfies \eref{barrier} classically.
\end{proof}

We obtain existence by a standard application of Perron's method.

\begin{theorem}
  \label{t.existence}
  Suppose $U \subseteq \R^d$ is bounded open and convex and $f \in C(\bar U)$ satisfies $f > 0$.  There is a unique $u \in C(\bar U)$ that satisfies
  \begin{equation}
    \label{e.bvpF}
    \begin{cases}
      F(D u, D^2 u) = f^2 & \mbox{in } U \\
      u = 0 & \mbox{on } \partial U
    \end{cases}
  \end{equation}
  in the sense of viscosity.
\end{theorem}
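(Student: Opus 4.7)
Uniqueness follows immediately from the comparison principle of Theorem \ref{t.comparison}: any two continuous viscosity solutions are both sub- and super-solutions with matching boundary data. For existence, I would run Perron's method. The zero function $\underline u \equiv 0$ is a subsolution since $F(0,0) = 0 \leq f^2$. For a global supersolution I would use the explicit radial formula \eqref{eq:ball}: fix $x_0 \in U$, set $R = \diam(U) + 1$, and define
\begin{equation*}
\bar u(x) = A\bigl( R^{2d/(d+1)} - |x - x_0|^{2d/(d+1)} \bigr),
\end{equation*}
with $A$ chosen so that $\bigl( 2dA/(d+1) \bigr)^{d+1} \geq \|f\|_\infty^2$. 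A direct computation using the geometric form $|Dh|^{d+1}\kappa_G = f^2$ of the equation (Section \ref{s.geom}) shows $\bar u$ is a classical supersolution on $U \setminus \{x_0\}$. At $x_0$ the peak has H\"older exponent $\beta = 2d/(d+1) \in (1,2)$, so no $C^2$ test function can touch $\bar u$ from below there: any such $\phi$ would force $\tfrac{1}{2} t^2 v^T D^2\phi(x_0) v + o(t^2) \leq -A t^\beta$ as $t \to 0^+$, impossible for $\beta < 2$. Hence the supersolution condition at $x_0$ is vacuous, $\bar u$ is a global viscosity supersolution, and $\bar u \geq 0$ on $\bar U$.

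Define the Perron function $u(x) = \sup\{v(x) : v \in \mathrm{USC}(\bar U),\ v \leq \bar u,\ v|_{\partial U} \leq 0,\ v \text{ is a subsolution in } U\}$, so that standard viscosity theory makes the upper envelope $u^*$ a subsolution and the lower envelope $u_*$ a supersolution on $U$. Since $\underline u \equiv 0$ lies in the admissible class, $u_* \geq 0$ on $\partial U$. The main step is to show $u^* \leq 0$ on $\partial U$, for which I would build a local upper barrier at each $x_0 \in \partial U$ from Lemma \ref{l.barrier}. Convexity of $U$ gives a supporting hyperplane at $x_0$; after a rigid motion sending $x_0$ to the origin and this hyperplane to $\{y_d = 0\}$ (with $U \subseteq \{y_d > 0\}$), I define
\begin{equation*}
\tilde\psi(x) = \mu\, \psi\bigl( \lambda^{-1}(x - x_0) \bigr),
\end{equation*}
where $\psi$ is the half-space barrier of Lemma \ref{l.barrier} (composed with the rotation) and $\lambda > \diam(U)$ ensures $\tilde\psi \in C(\bar U)$. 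The affine identity $F(B^t p, B^t A B) = \det(B)^2 F(p,A)$ from Lemma \ref{l.Fdef} gives $F(D\tilde\psi, D^2\tilde\psi) \geq \mu^{d+1}/\lambda^{2d}$, which exceeds $\|f\|_\infty^2$ for $\mu$ large.

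To convert this local barrier into the bound $u^*(x_0) \leq 0$, I would apply Theorem \ref{t.comparison} on a slab $V = U \cap \{y_d < h\}$ for small $h > 0$ (rather than a ball around $x_0$). On $\partial U \cap \overline V$ every admissible $v$ satisfies $v \leq 0 \leq \tilde\psi$, while on $U \cap \{y_d = h\}$ the explicit form of $\psi$ gives $\tilde\psi \geq c\mu(h/\lambda)^{2/(d+1)}$, which dominates $\|\bar u\|_\infty \geq v$ once $\mu$ is taken sufficiently large. Comparison then yields $v \leq \tilde\psi$ on $V$ for all admissible $v$, hence $u \leq \tilde\psi$ on $V$; sending $x \to x_0$ gives $u^*(x_0) \leq 0$. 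A final application of Theorem \ref{t.comparison} to $u^*$ and $u_*$, both with boundary value $0$, gives $u^* \leq u_*$; combined with the trivial $u_* \leq u^*$, this yields $u := u^* = u_* \in C(\bar U)$. The main obstacle is this last step: if $\partial U$ has a flat face through $x_0$, then $\tilde\psi$ vanishes on the entire face, so a ball-shaped comparison region would have spherical boundary points near the face where $\tilde\psi$ is too small to dominate admissible $v$. The slab $\{y_d < h\}$ avoids this by placing the only non-boundary part of $\partial V$ uniformly away from the supporting hyperplane.
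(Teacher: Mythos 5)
Your proposal is correct and follows essentially the same route as the paper: Perron's method with the zero subsolution, the parabolic barrier of \lref{barrier} rotated to the supporting hyperplane at each boundary point as an upper barrier, and the comparison principle of \tref{comparison}; the only cosmetic difference is that the paper normalizes by rescaling so that $U \subseteq B_{1/2}$ and $f \leq 1$, whereas you amplify and dilate the barrier using the affine identity of \lref{Fdef}. One remark: the slab comparison at the end is an unnecessary detour, since with $\lambda > \diam U$ your $\tilde\psi$ is a supersolution on all of $U$ satisfying $\tilde\psi \geq 0 \geq v$ on all of $\partial U$ (flat faces included), so comparison on $U$ itself already yields $v \leq \tilde\psi$ and hence $u^*(x_0) \leq 0$.
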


\begin{proof}
  Rescaling, we may assume that $U \subseteq B_{1/2}$ and $f \leq 1$.  For every $p \in \partial U$ with inward normal $n_p \in \R^d$, choose an orthogonal matrix $O_p \in \R^{d \times d}$ such that $O_p n_p = e_d$.  Using \lref{Fdef} and \lref{barrier}, we see that the functions $\psi_p(p + x) = \psi(O_p x)$ are supersolutions of \eref{bvpF} that satisfy $\psi_p(p) = 0$.  The zero function is a subsolution of \eref{bvpF} that achieves the boundary conditions.  Since we have a comparison principle from \tref{comparison}, the supremum of all subsolutions is equal to the infimum of all supersolutions, and this object is the unique solution of \eref{bvp}.
\end{proof}

Another application of our barrier is H\"older regularity.

\begin{corollary}
  \label{c.holder}
  The unique solution $u \in C(\bar U)$ from \tref{existence} satisfies the H\"older estimate $\| u \|_{C^{2/(d+1)}(U)} \leq \lalpha$, where $\lalpha$ depends only on $\diam U$ and $\max f$.
\end{corollary}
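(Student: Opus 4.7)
The plan is to derive \cref{holder} from the boundary barrier $\psi$ of \lref{barrier} together with the comparison principle \tref{comparison}. The exponent $2/(d+1)$ matches the worst-case boundary decay of $\psi$, and the barrier gives the essential control on $u$ at each boundary point after an affine change of variables.

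After rescaling so that $U \subseteq B_{1/2}$ and $\max f \leq 1$ (absorbing $\diam U$ and $\max f$ into the final constant), I would first establish the sup bound: $u \geq 0$ by comparison with the zero subsolution, and $u \leq M$ by comparison with the explicit radial solution \eqref{eq:ball} on an enclosing ball, with $f$ replaced by $\max f$. For the boundary H\"older estimate, for each $p \in \partial U$ convexity of $U$ supplies an inward unit normal $n_p$ (at corners, any supporting normal works); picking an orthogonal $O_p$ with $O_p n_p = e_d$, the translated and rotated barrier $\psi_p(x) := \psi(O_p(x - p))$ is a viscosity supersolution on $\bar U$ with $\psi_p(p) = 0$ and $\psi_p \geq 0$ on $\partial U$, exactly as in the proof of \tref{existence}. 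Comparison gives $u \leq \psi_p$ on $\bar U$, and the explicit formula for $\psi$ yields $\psi_p(x) \leq C|x-p|^{2/(d+1)}$. Combined with $u \geq 0$, this produces
\[
0 \leq u(x) \leq C \dist(x, \partial U)^{2/(d+1)}, \quad x \in \bar U.
\]

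To upgrade this to a global estimate, I split any pair $x, y \in \bar U$ into two cases. If $\min(\dist(x,\partial U), \dist(y,\partial U)) \leq 2|x-y|$, the triangle inequality places both points within $3|x-y|$ of $\partial U$, and the boundary bound yields $|u(x) - u(y)| \leq u(x) + u(y) \leq C|x-y|^{2/(d+1)}$ directly. The remaining case---both points at interior depth at least $2|x-y|$---is the main obstacle: one must control the oscillation of $u$ on a ball of radius $|x-y|$ by $C|x-y|^{2/(d+1)}$ with a constant that does not depend on the modulus of continuity of $f$. I would approach this through the scaling identity in \lref{Fdef}, namely that the map $v(z) = c\, u(x + r z)$ preserves the form of the equation for appropriate $c, r$; this reduces the oscillation problem to a unit-scale normalized PDE, where the quasi-concavity of $u$ (forced by the supersolution condition when $f > 0$, per the remark following \tref{comparison}) combined with a rescaled version of the boundary barrier argument should yield the required bound.
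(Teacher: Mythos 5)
Your boundary estimate is fine and matches the paper's use of \lref{barrier}: placing $\psi_p(\cdot) = \psi(O_p(\cdot - p))$ at each $p \in \partial U$ and invoking \tref{comparison} gives $0 \leq u(x) \leq C\,\dist(x,\partial U)^{2/(d+1)}$, and your near-boundary case of the two-point estimate follows. The gap is in the interior case, which you correctly identify as the crux but do not close. The rescaling $v(z) = c\,u(x_0 + rz)$ does normalize the equation (with $c = r^{-2d/(d+1)}$, using the homogeneity in \lref{Fdef}), but it leaves you needing a \emph{unit-scale interior oscillation bound} for a degenerate equation with no boundary data on $\partial B_1$ to hang a barrier on; ``a rescaled version of the boundary barrier argument should yield the required bound'' is precisely the assertion that has to be proved, and the scaling detour does not supply it. As written, the interior case is a restatement of the problem, not a proof.

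The missing idea --- which makes the whole case split unnecessary --- is to apply the barrier not at $\partial U$ but at the boundary of a super-level set of $u$ itself. This is what the paper does: by \mycite{Barron-Goebel-Jensen}, $u$ is quasi-concave, so for $u(x) < u(y)$ the set $V = \{u > u(x)\}$ is convex and open with $y \in V$, $x \notin V$. Take $z \in \partial V$ with $|y - z| = \dist(y,\partial V) \leq |y - x|$, rotate so that $y - z$ points in the $e_d$ direction, and set $\tilde\psi(z + w) = u(x) + \psi(Ow)$. Then $\tilde\psi$ is a supersolution on the convex set $V$ dominating $u$ on $\partial V$, and comparison on $V$ gives $u(y) \leq u(x) + 2|y - z|^{2/(d+1)} \leq u(x) + 2|y - x|^{2/(d+1)}$. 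This single argument handles interior and boundary pairs uniformly (the boundary case is $V = U$) and uses quasi-concavity exactly where you only gestured at it. You should replace your interior scaling step with this level-set barrier argument.
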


\begin{proof}
  Rescaling, we may assume that $U \subseteq B_{1/2}$ and $f \leq 1$.  By \mycite{Barron-Goebel-Jensen}, $u$ is quasi-concave.  Suppose $x, y \in U$ and $u(x) < u(y)$.  Let $V = \{ u > u(x) \}$, which is convex and open.  Choose $z \in \partial V$ such that $|y - z| = \dist(y, \partial V)$.  Choose an orthogonal matrix $O \in \R^{d \times d}$ such that $O (y - z) = |y - z| e_d$.  Let $\tilde \psi(z + w) = u(x) + \psi(O w)$, where $\psi$ is from \lref{barrier}.  Note that $F(D \tilde \psi, D^2 \tilde \psi) \geq 1$ in $V$ and $\tilde \psi \geq k \geq u$ on $\partial V$.  By \tref{comparison}, we obtain $u(y) \leq \tilde \psi(y)$.  In particular, $u(y) \leq \tilde \psi(y) = u(x) + 2 |y - z|^{2/d+1} \leq u(x) + 2 |y - x|^{2/d+1}$.
\end{proof}

\subsection{Simple Test Functions}

We construct a family of simple test functions that form a complete family for the operator $F$.  Recall the function
\begin{equation*}
  \varphi(x) = x_d - \tfrac12 (x_1^2 + \cdots + x_{d-1}^2)
\end{equation*}
which satisfies $F(D \varphi, D^2 \varphi) = 1$.  We build our test functions by distorting $\varphi$.

\begin{definition}
  A simple upper test function is a function of the form
  \begin{equation*}
    \psi = \sigma \circ \varphi \circ a,
  \end{equation*}
  where $\sigma \in C^\infty(\R)$, $\sigma' \geq 0$, $\sigma'' \geq 0$, $a \in C^\infty(\R^d,\R^d)$, $D a$ constant, and $\det Da = 1$.
\end{definition}

\begin{definition}
  A simple lower test function is a function of the form
  \begin{equation*}
    \psi = \sigma \circ \varphi \circ a,
  \end{equation*}
  where $\sigma \in C^\infty(\R)$, $\sigma' \geq 0$, $\sigma'' \leq 0$, $a \in C^\infty(\R^d,\R^d)$, $D a$ constant, and $\det Da = 1$.
\end{definition}

The following formalizes what we mean by complete family.

\begin{lemma}
  \label{l.testfunctions}
  Suppose $u \in C^\infty(\R^d)$ and $F(Du(x),D^2 u(x)) > 0$.
  \begin{enumerate} 
  \item For every small $\ep > 0$, there is a $\delta > 0$ and a simple upper test function $\psi$ such that $\psi(x) = u(x)$, $\psi(y) > u(y)$ for $0 < |y-x| < \delta$, and $F(D \psi(x), D^2 \psi(x)) \leq (1 + \ep) F(Du(x),D^2 u(x))$.

  \item For every small $\ep > 0$, there is a $\delta > 0$ and a simple lower test function $\psi$ such that $\psi(x) = u(x)$, $\psi(y) < u(y)$ for $0 < |y-x| < \delta$, and $F(D \psi(x), D^2 \psi(x)) \geq (1 - \ep) F(Du(x),D^2 u(x))$.
  \end{enumerate}
\end{lemma}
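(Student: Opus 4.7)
My plan is to construct $\psi = \sigma \circ \varphi \circ a$ by a direct matching argument, exploiting the affine invariance $F(B^t p, B^t M B) = (\det B)^2 F(p,M)$ from \lref{Fdef} together with the fact that $F$ depends on its Hessian argument only through the restriction to $p^\perp$ (since $\langle p, \cof(-M)p\rangle = |p|^2 \det(-M|_{p^\perp})$ for symmetric $M$ in the ellipticity domain). I treat the upper test case; the lower one is symmetric. Write $p = Du(x)$, $M = D^2u(x)$. The hypothesis $F(p,M) > 0$ forces $p \neq 0$ and $-M|_{p^\perp}$ to be positive definite. After an orthogonal change of coordinates, which preserves the form of $F$ and has unit Jacobian, I may assume $p = |p|e_d$ and $M|_{p^\perp} = -\Lambda$ with $\Lambda = \operatorname{diag}(\lambda_1,\ldots,\lambda_{d-1})$ and each $\lambda_i > 0$.

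I then take $a(y) = A(y-x)$ with $A$ upper block-triangular, having diagonal $(d-1)\times(d-1)$ block $\operatorname{diag}(\mu_i)$, off-diagonal column $w \in \R^{d-1}$, and lower-right entry $c$. For $\sigma$, I use a smooth non-decreasing convex function satisfying $\sigma(0) = u(x)$, $\sigma'(0) = s > 0$, $\sigma''(0) = \tau > 0$; concretely, integrating $\sigma'(t) = s e^{\tau t/s}$ works. A direct computation using $D\varphi(0) = e_d$ and $D^2\varphi = -I + e_d e_d^t$ gives, at $y = x$: $D\psi = sc\,e_d$, $[D^2\psi]_{p^\perp} = -s\operatorname{diag}(\mu_i^2)$, $[D^2\psi]_{id} = -s\mu_i w_i$ for $i<d$, and $[D^2\psi]_{dd} = \tau c^2 - s|w|^2$. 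Given small $\epsilon > 0$, I force $D\psi(x) = p$ by setting $c = |p|/s$, arrange $[D^2\psi(x)]_{p^\perp} = -(1-\epsilon)\Lambda$ by setting $\mu_i = \sqrt{(1-\epsilon)\lambda_i/s}$, cancel the off-diagonals by $w_i = -M_{id}/(s\mu_i)$, and ensure $[D^2\psi(x)]_{dd} > M_{dd} + \epsilon$ by choosing $\tau$ sufficiently large. The unimodularity constraint $\det A = c\prod_i \mu_i = 1$ then pins down $s$ uniquely via $s^{d+1} = (1-\epsilon)^{d-1} F(p,M)$.

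The step I expect to require the most care is verifying strict local majorization $\psi(y) > u(y)$ in a full punctured neighborhood of $x$, rather than only along preferred directions; this is precisely where the matching choice of $w$ and the freedom in $\tau$ are essential, ensuring that $D^2\psi(x) - M$ is block-diagonal of the form $\operatorname{diag}(\epsilon\Lambda, \tau c^2 - s|w|^2 - M_{dd})$ and thus strictly positive definite on all of $\R^d$. With matched value and gradient, Taylor expansion then gives $\psi - u \geq c|y-x|^2$ in a neighborhood of $x$. The $F$-value check is immediate: $F(D\psi(x), D^2\psi(x)) = |p|^2\det((1-\epsilon)\Lambda) = (1-\epsilon)^{d-1} F(p,M) \leq (1+\epsilon) F(p,M)$. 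The lower test case runs identically with $(1-\epsilon)$ replaced by $(1+\epsilon)$ in the prescription of $\mu_i$ and with a concave non-decreasing $\sigma$ (take $\sigma'(t) = s e^{-\tau t/s}$, $\tau > 0$), producing $F(D\psi, D^2\psi) = (1+\epsilon)^{d-1}F(p,M) \geq (1-\epsilon)F(p,M)$.
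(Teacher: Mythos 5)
Your construction is correct and is essentially the paper's proof: both build $\psi=\sigma\circ\varphi\circ a$ with an exponential profile $\sigma$ and a unimodular affine map $a$ that matches the gradient and rescales the Hessian on $Du(x)^\perp$ by a factor $1\mp\ep$, with the free convexity parameter ($\tau$ here, $\beta$ in the paper) dominating in the gradient direction. The only real difference is that you cancel the mixed second derivatives $M_{id}$ exactly via the shear column $w$ of $A$, making $D^2\psi(x)-D^2u(x)$ block diagonal, whereas the paper absorbs them by taking $\beta$ large (a Schur-complement step); as a side benefit your version puts the $(1\mp\ep)$ factors on the correct sides, whereas the paper's displayed normalizations of $D^2u(0)|_{p^\perp}$ have the roles of $1-\ep$ and $1+\ep$ interchanged between the two parts (as written, $D^2\psi(0)>D^2u(0)$ would fail on $p^\perp$).
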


\begin{proof}
  Part 1.  Observe that $Du(x) \neq 0$ and that $D^2 u(x)$ is negative definite on the half space orthogonal to $D u(x)$.  Using \lref{Fdef} and the definition of simple test function, we can make an affine change of variables so that
  \begin{equation*}
    x = 0,
    \quad
    D u(0) = |Du(0)| e_d,
    \quad \mbox{and} \quad
    D^2 u(0) = \left[\begin{matrix} - (1-\ep) |D u(0)| I_{d-1} & v \\ v^t & \gamma \end{matrix}\right],
  \end{equation*}
  where $v \in \R^{d-1}$ and $\gamma \in \R$.  For $\beta > 0$ to be determined $\psi = \sigma \circ \varphi$, where
  \begin{equation*}
    \sigma(s) = u(0) + \beta^{-1}|D u(0)|(e^{\beta s} - 1).
  \end{equation*}
  Note that $\sigma \in C^\infty(\R)$, $\sigma' \geq 0$, and $\sigma'' \geq 0$.  Moreover,
  \begin{equation*}
    \psi(0) = u(0),
    \quad
    D \psi(0) = D u(0),
    \quad \mbox{and} \quad
    D^2 \psi(0) = \left[ \begin{matrix}
        - |D u(0)| I_{d-1} & 0 \\ 0 & \beta |D u(0)|.
      \end{matrix}\right]
  \end{equation*}
  Making $\beta > 0$ large, we obtain $D^2 \psi(0) > D^2 u(0)$.  By second order expansion, we can choose $\delta > 0$ so that $\psi(y) > u(y)$ for $0 < |y - x| < \delta$.  Finally, compute $F(D \psi(0),D^2 \psi(0)) = (1-\ep)^{1-d} F(D u(0),D^2 u(0)).$

  Part 2.  Observe that $Du(x) \neq 0$ and that $D^2 u(x)$ is negative definite on the half space orthogonal to $D u(x)$.  Using \lref{Fdef} and the definition of simple test function, we can make an affine change of variables so that
  \begin{equation*}
    x = 0,
    \quad
    Du(0) = |Du(0)| e_d,
    \quad \mbox{and} \quad
    D^2 u(0) = \left[\begin{matrix} - (1+\ep) |D u(0)| I_{d-1} & v \\ v^t & \gamma \end{matrix}\right],
  \end{equation*}
  where $v \in \R^{d-1}$ and $\gamma \in \R$. For $\beta > 0$ to be determined $\psi = \sigma \circ \varphi$, where
  \begin{equation*}
    \sigma(s) = u(0) + \beta^{-1} |Du(0)| (1 - e^{-\beta s}).
  \end{equation*}
  Note that $\sigma \in C^\infty(\R)$, $\sigma' \geq 0$, and $\sigma'' \leq 0$.  Moreover,
  \begin{equation*}
    \psi(0) = u(0),
    \quad
    D \psi(0) = D u(0),
    \quad \mbox{and} \quad
    D^2 \psi(0) = \left[ \begin{matrix}
        - |D u(0)| I_{d-1} & 0 \\ 0 & - \beta |D u(0)|.
      \end{matrix}\right]
  \end{equation*}
  Making $\beta > 0$ large, we obtain $D^2 \psi(0) < D^2 u(0)$.  By second order expansion, we can choose $\delta > 0$ so that $\psi(y) < u(y)$ for $0 < |y - x| < \delta$.  Finally, compute $F(D \psi(0),D^2 \psi(0)) = (1+\ep)^{1-d} F(D u(0),D^2 u(0)).$
\end{proof}
  
\subsection{Piece-wise approximation}

For the purposes of proving the scaling limit of convex peeling, it is useful to recall the viscosity analogue of Galerkin approximation.  When there is a comparison principle, Perron's method implies more than just the existence of a solution.  In fact, it implies that the solution is the uniform limit of piece-wise smooth subsolutions and supersolutions.  We obtain a slightly stronger version where the pieces are all simple upper or lower test functions.

\begin{definition}
  A piece-wise supersolution of $F(Dh,D^2h) = f^2$ in $U$ is a function $u \in C(U)$ for which there is a finite list of simple upper test functions $\psi_k$ and balls $B_{r_k}(x_k)$ such that
  \begin{enumerate}
  \item $\psi_k \geq u$ in $B_{r_k}(x_k) \cap \bar U$,
  \item $F(D \psi_k, D^2 \psi_k) > \sup_{B_{r_k}(x_x) \cap U} f^2$ in $B_{r_k}(x_k)$,
  \item for every $x \in \bar U$, there is a $k$ such that $x \in B_{r_k/3}(x_k)$ and $u(x) = \psi_k(x)$.
  \end{enumerate}
\end{definition}

\begin{definition}
  A piece-wise subsolution of $F(Dh,D^2h) = f^2$ in $U$ is a function $u \in C(\bar U)$ for which there is a finite list of simple lower test functions $\psi_k$ and balls $B_{r_k}(x_k)$ such that
  \begin{enumerate}
  \item $\psi_k \leq u$ in $B_{r_k}(x_k) \cap \bar U$,
  \item $F(D \psi_k, D^2 \psi_k) < \inf_{B_{r_k}(x_x) \cap U} f^2$ in $B_{r_k}(x_k)$,
  \item for every $x \in \bar U$, there is a $k$ such that $x \in B_{r_k/3}(x_k)$ and $u(x) = \psi_k(x)$.
  \end{enumerate}
\end{definition}

Observe that a piece-wise supersolution is a viscosity supersolution and that the set of piece-wise supersolutions is closed under pairwise minimum.  The analogous facts are true for piece-wise subsolutions.

Before proving a general approximation result, observe that \lref{testfunctions} only provides simple approximations when $F(D u(x),D^2 u(x)) > 0$.  Finding a piece-wise approximation when $Du(x) = 0$ requires an ad hoc argument.

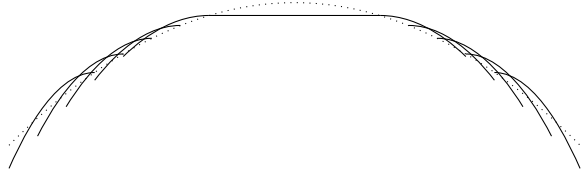
\begin{figure}[h]
  \begin{tikzpicture}[scale=0.3\textwidth,x=1,y=1]
  \draw[dotted] (-1,-1/2) parabola bend (0,0) (1,-1/2);
  \foreach \h in {.3} {
    \foreach \x in {.3,.4,.5,.6,.7} {
      \draw (\x+\h,-0.5*\x*\x-1.6*\x*\h) parabola bend (\x,-0.5*\x*\x) (\x,-0.5*\x*\x);
    }
    \foreach \x in {-.7,-.6,-.5,-.4,-.3} {
      \draw (\x-\h,-0.5*\x*\x+1.6*\x*\h) parabola bend (\x,-0.5*\x*\x) (\x,-0.5*\x*\x);
    }
  }
  \draw (-.3,-0.5*.3*.3) -- (.3,-0.5*.3*.3);
\end{tikzpicture}
  \caption{The piece-wise approximation of a downward parabola by simple lower test functions.}
  \label{f.zerofunction}
\end{figure}

\begin{lemma}
  \label{l.zerofunction}
  Suppose $\lalpha > 0$ and consider the function
  \begin{equation*}
    u(x) = - \tfrac{d+1}{2d} \lalpha |x|^{\tfrac{2d}{d+1}},
  \end{equation*}
  which satisfies $F(Du(x),D^2 u(x)) = \lalpha^{d+1}$ in $\R^d \setminus \{ 0 \}$.  For every $R, \ep > 0$, there is a piece-wise subsolution $v \in C(B_R)$ of $F(Dv,D^2 v) = \lalpha^{d+1} + \ep$ in $B_R$ such that $|v - u| < \ep$ in $B_R$.
\end{lemma}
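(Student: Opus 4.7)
The function $u$ is smooth on $\R^d\setminus\{0\}$ with $Du\neq 0$ and $F(Du,D^2u)=\lalpha^{d+1}$, so away from the origin \lref{testfunctions}(2) directly yields simple lower test functions tangent to $u$ from below with operator value essentially equal to $\lalpha^{d+1}<\lalpha^{d+1}+\eps$. The principal obstacle is the origin, where $Du$ vanishes discontinuously and \lref{testfunctions} does not apply. The key observation, already visible in \fref{zerofunction}, is that \emph{constants are simple lower test functions}: taking $\sigma\equiv c$ and $a$ the identity in the definition gives $\sigma'=\sigma''=0$, and $F(0,0)=0<\lalpha^{d+1}+\eps$ trivially.

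Fix $\delta>0$ small enough that $\tfrac{d+1}{d}\lalpha\,\delta^{2d/(d+1)}<\eps$, and introduce the constant piece $\psi_0\equiv c_\delta:=-\tfrac{d+1}{2d}\lalpha\,\delta^{2d/(d+1)}$, to be used with the ball $B_\delta(0)$. Since $u$ attains its maximum $0$ at the origin and its minimum $c_\delta$ on $\partial B_\delta$, we have $\psi_0\leq u$ on $B_\delta$ with $|\psi_0-u|<\eps$ throughout. On the compact annulus $A:=\overline{B_R}\setminus B_{\delta/3}(0)$, at each point $x_0$ I would invoke \lref{testfunctions}(2) with a small slack parameter to obtain a simple lower test function $\psi_{x_0}$ tangent to $u$ at $x_0$ with $F(D\psi_{x_0},D^2\psi_{x_0})<\lalpha^{d+1}+\eps$ on a small ball $B_{r_{x_0}}(x_0)$; shrinking $r_{x_0}$ by continuity gives also $\psi_{x_0}\leq u$ and $|\psi_{x_0}-u|<\eps$ throughout that ball. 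By compactness, extract a finite subcover $\{B_{r_k/3}(x_k)\}_{k=1}^N$ of $A$.

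The candidate is then $v:=\max\{\psi_0,\psi_1,\dots,\psi_N\}$. Conditions (1) and (2) of the piece-wise subsolution definition are immediate from the construction, and the estimate $|v-u|<\eps$ follows from the local estimates together with $v\leq u$. The most delicate point is the touching condition (3): for each $x\in\overline{B_R}$ some $k$ with $x\in B_{r_k/3}(x_k)$ must realize the maximum $v(x)$. This is where the main technical work lies, and I would handle it by a standard Perron-type cover refinement: shrink the radii $r_k$ further so that the strict tangency $\psi_k(x_k)=u(x_k)$, combined with $\psi_j\leq u$ for $j\neq k$, forces $\psi_k$ to strictly dominate the other $\psi_j$ on the inner ball $B_{r_k/3}(x_k)$ (and symmetrically, well inside $B_{\delta/3}(0)$ the tangent pieces drop strongly below $c_\delta$, so $\psi_0$ realizes the max there). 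No new ideas beyond \lref{testfunctions} are needed for this bookkeeping, only care with constants and radii.
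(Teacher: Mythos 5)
Your overall architecture (local pieces from \lref{testfunctions}(2) on an annulus, plus a flat piece near the origin --- and the observation that a constant is itself a simple lower test function with $F(0,0)=0$) is the same as the paper's, but the proposal breaks down exactly at condition (3) of the definition of piece-wise subsolution, and the repair you sketch is not correct. If every piece $\psi_k$ is tangent to $u$ \emph{from below}, then $\psi_k(y)<u(y)$ for all $y\in B_{r_k}(x_k)\setminus\{x_k\}$, so each piece can certify condition (3) only at its own tangency point: at a generic $y$, the maximum $v(y)=\max_k\psi_k(y)$ is attained by whichever piece happens to have dropped least below $u$ at $y$, and nothing forces that piece's inner ball to contain $y$. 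Your proposed fix --- that the tangency $\psi_k(x_k)=u(x_k)$ together with $\psi_j\le u$ makes $\psi_k$ dominate the other pieces on all of $B_{r_k/3}(x_k)$ --- fails already at any other tangency point $x_j\in B_{r_k/3}(x_k)$, where $\psi_j(x_j)=u(x_j)>\psi_k(x_j)$; and since the inner balls must cover $\overline{B_R}$, such collisions cannot be avoided. The same problem recurs at the interface with the constant piece: a piece tangent at $|x_k|=\delta/3$ has $\psi_k(x_k)=u(x_k)>c_\delta$, so $\psi_0$ does not realize the maximum just inside $B_{\delta/3}(0)$.

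The missing idea is the paper's lifting device: take the tangent piece produced by \lref{testfunctions} and \emph{add a small positive constant}, chosen so that the lifted piece is still $\le u+\ep$ and its exceedance set $\{\psi_{x}>u\}$ is a nonempty open neighborhood of $x$ contained in the ball around $x$. Finitely many such exceedance sets cover $\overline{B_R}\setminus B_r$, and one sets $v=\max\{u,\max_k\psi_{x_k}\}$ (with a final concave truncation $\sigma\circ v$ to flatten the piece over $B_r$). Then at every point $y$ of the annulus some covering piece strictly exceeds $u(y)$, hence the maximizing piece also exceeds $u(y)$, hence $y$ lies in \emph{that} piece's exceedance set and therefore in its inner ball --- this is what makes condition (3) checkable with finitely many pieces. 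One could in principle salvage your tangent-from-below version by using a uniform net of identically shaped pieces and proving two-sided bounds $c|y-x_k|^2\le u(y)-\psi_k(y)\le C|y-x_k|^2$ so that the argmax is always a nearby piece, but \lref{testfunctions} provides no such uniformity and the curvature of $u$ degenerates at the origin, so this is genuinely more than bookkeeping.
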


\begin{proof}
  For every $x \in \R^d \setminus \{ 0 \}$, we can use \lref{testfunctions} to select a simple \red upper \nc test function $\psi_x$ and a radius $r_x > 0$ such that $\psi_x \leq u + \ep$, $0 \in \{ \psi_x > u \} \subseteq B_{r_x}(x)$, and $F(D\psi_x,D^2\psi_x) < \lalpha^{d+1} + \ep$ in $B_{r_x}(x)$.  Indeed, we take that $\psi$ the lemma produces and add a small positive constant.  Select any $r > 0$ such that $u(x) > -\tfrac12 \ep$ in $B_r$.  By compactness, we may select finitely many $x_k$ such that
  \begin{equation*}
    \bar B_R \setminus B_r \subseteq \cup_k \{ \psi_{x_k} > u \}.
  \end{equation*}
  Consider the function
  \begin{equation*}
    v(x) = \max \{ u(x), \max \{ \psi_{x_k}(x) : x \in B_{r_k}(x) \} \}.
  \end{equation*}
  Now, $v$ is a viscosity subsolution of $F(Dv,D^2 v) = \lalpha^{d+1} + \ep$.  Moreover, $v$ is a piece-wise subsolution in $B_R \setminus \bar B_r$. To fix the piece in $B_r$, we select a $\sigma \in C^\infty(\R)$ such that $\sigma' \geq 0 \geq \sigma''$, $\sigma(s) = s$ if $s \leq -\ep$ and $\sigma(s) = -\tfrac12 \ep$ if $s \geq -\tfrac12\ep$.  Then $w = \sigma \circ v$ is a piece-wise subsolution of $F(D w,D^2 w) = \lalpha^{d+1} + \ep$ in $B_R$.  A schematic of $w$ appears in \fref{zerofunction}.
\end{proof}

Now that we can approximate test functions whose gradient vanishes, we prove our general approximation result.

\begin{theorem}
  \label{t.piecewise}
  Let $u, f \in C(\bar U)$ be as in \tref{existence}.  For any $\ep > 0$, there is a piece-wise supersolution $\overline u \in C(\bar U)$ and a piecewise subsolution $\underline u \in C(\bar U)$ such that $u - \ep \leq \underline u \leq u \leq \overline u \leq u + \ep$ in $\bar U$.
\end{theorem}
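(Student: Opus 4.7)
The plan is to construct $\overline u$ as the minimum of finitely many simple upper test functions that strictly exceed $u$, via a Perron-type compactness argument. The construction of $\underline u$ is symmetric, with \lref{zerofunction} invoked at points where $Du$ vanishes (i.e.\ at maxima of $u$).

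First I would promote $u$ to a strict supersolution. Since $u \geq 0$ on $\bar U$ (by comparing with the zero subsolution via \tref{comparison}) and \lref{Fdef} gives the scaling identity $F(\lambda p, \lambda A) = \lambda^{d+1} F(p,A)$ (set $B = \lambda^{1/2} I$), the rescaled function $w = (1+\eta) u$ is a viscosity supersolution of $F(Dw, D^2 w) = (1+\eta)^{d+1} f^2$. Because $f$ is bounded below by a positive constant on the compact set $\bar U$, this right-hand side exceeds $f^2$ by a uniform positive amount. Choosing $\eta$ small enough that $\eta\|u\|_\infty < \ep/2$ yields $u \leq w \leq u + \ep/2$.

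The heart of the argument is the local construction: at each $x_0 \in \bar U$, produce a simple upper test function $\psi_{x_0}$ and a radius $r_0 > 0$ with $\psi_{x_0} \geq u$ on $B_{r_0}(x_0) \cap \bar U$, $\psi_{x_0}(x_0) \leq u(x_0) + \ep$, and $F(D\psi_{x_0}, D^2\psi_{x_0}) > f^2$ on $B_{r_0}(x_0)$. I would smooth $w$ near $x_0$ by a sup-convolution followed by mollification, yielding a smooth $\tilde w \geq u$ that is still a strict classical supersolution on some $B_{r_0}(x_0)$. Then \lref{testfunctions} applied to $\tilde w$ at $x_0$ (whose hypothesis $F(D\tilde w, D^2 \tilde w) > 0$ holds because $f > 0$) produces a simple upper test function touching $\tilde w$ from above at $x_0$ with $F(D\psi_{x_0}, D^2\psi_{x_0})$ strictly greater than $F(D\tilde w, D^2 \tilde w)$ at $x_0$ and hence, by continuity and possibly shrinking $r_0$, strictly greater than $f^2$ throughout $B_{r_0}(x_0)$. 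A finite subcover $\bar U \subseteq \bigcup_k B_{r_k/3}(x_k)$ then produces $\overline u := \min_k \psi_k$ as the required piece-wise supersolution with $u \leq \overline u \leq u + \ep$.

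The main obstacle is the smoothing step: because $u$ is only known to be $C^{2/(d+1)}(\bar U)$ by \cref{holder}, the smoothing parameters, the radius $r_0$, and the lift $\eta$ must be chosen in concert with the modulus of continuity of $f$ in order to preserve the strict inequality $F(D\tilde w, D^2 \tilde w) > f^2$ on a full ball rather than merely at $x_0$, and to arrange the cover so that for each $x$ the minimizing piece $\psi_k$ actually has $x \in B_{r_k/3}(x_k)$. For $\underline u$ the symmetric construction fails at maxima of $u$, where $Du = 0$ forces $F(Du, D^2 u) = 0$ and \lref{testfunctions} no longer applies; there the H\"older bound of \cref{holder} together with quasi-concavity force $u(x) \approx u(x^*) - \alpha|x - x^*|^{2d/(d+1)}$ near the maximum $x^*$, and \lref{zerofunction} supplies the required piece-wise lower approximation directly.
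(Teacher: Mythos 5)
Your overall strategy (directly manufacture, at each point, a simple upper test function lying above $u$ and within $\ep$ of it, then take a minimum over a finite cover) is genuinely different from the paper's route, which never smooths $u$ at all: the paper runs Perron's method on the class of piecewise sub/supersolutions, showing that the supremum of all piecewise subsolutions must itself be a supersolution (else a local bump built from \lref{testfunctions} or \lref{zerofunction} could be pasted in to contradict maximality), and then invokes \tref{comparison}. The reason the paper takes that detour is precisely the step you flag as your ``main obstacle,'' and that step is a genuine gap, not a matter of tuning parameters against the modulus of continuity of $f$. First, the convolutions go the wrong way in pairs: sup-convolution produces something $\geq w$ but preserves the \emph{sub}solution property, while inf-convolution preserves the supersolution property but produces something $\leq w$; neither output is $C^2$, only semiconvex/semiconcave. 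Second, the subsequent mollification preserves the supersolution inequality only when $F$ is concave in $(p,A)$ (so that Jensen's inequality points the right way), and the operator here, $F(p,A)=\langle p,\cof(-A)p\rangle$, is a determinant on $p^\perp$ and is neither convex nor concave in $A$ (only $F^{1/(d-1)}$ is concave on the elliptic branch, and the dependence on $p$ through the subspace $p^\perp$ breaks even that). Since $u$ is a priori only $C^{2/(d+1)}$ and may fail to be twice differentiable, there is no elementary regularization producing a smooth local classical strict supersolution above $u$, and \lref{testfunctions} cannot be applied. This is exactly the obstruction Perron's method is designed to circumvent.

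Two smaller points. The homogeneity you want is $F(\lambda p,\lambda A)=\lambda^{d+1}F(p,A)$ (cofactor is degree $d-1$, plus two powers of $p$); the substitution $B=\lambda^{1/2}I$ in \lref{Fdef} gives $F(\lambda^{1/2}p,\lambda A)=\lambda^d F(p,A)$, which is not the identity you need, though the degree-$(d+1)$ scaling is correct and is the one the paper itself uses in \tref{comparison}. More seriously, your treatment of the subsolution near the maximum is unsupported: \cref{holder} gives only the one-sided bound $u(x)\geq u(x^*)-C|x-x^*|^{2/(d+1)}$ (note the exponent is $2/(d+1)$, not $2d/(d+1)$), with no matching upper bound on the decay, so there is no justification for $u(x)\approx u(x^*)-\alpha|x-x^*|^{2d/(d+1)}$; and the set where a lower test function for $u$ must have vanishing gradient need not be a single point. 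In the paper this is handled inside the Perron argument by first perturbing the touching test function $w$ by $-\beta|y-x|^2$ so that either $Dw(x)\neq 0$ (and \lref{testfunctions} applies) or $Dw(x)=0$ with $D^2w(x)$ negative definite on $Dw(x)^\perp$ (and \lref{zerofunction} applies); no asymptotics for $u$ at its maximum are ever needed.
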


\begin{proof}
  By the comparison result from \tref{comparison}, it is enough to show that the infimum of all piecewise supersolutions is a subsolution and that the supremum of all piecewise subsolutions is a supersolution.  This would be a folklore theorem were it not for the fact that we demand the pieces have a special form.

  We first consider the subsolution case.  Let
  \begin{equation*}
    \underline u = \sup \{ v \in C(\bar U) \mbox{ a piecewise subsolution of \eref{bvpF}} \}
  \end{equation*}
  and suppose for contradiction that $u$ is not a supersolution.  Since $0$ is a piecewise subsolution, we see that $\underline u \geq 0$.  Using \lref{barrier} and \tref{comparison}, we see that $\sup_{\overline U} \underline u < \infty$ and $\underline u \leq 0$ on $\partial U$.  Since $\underline u$ is a bounded supremum of continuous functions, it is lower semicontinuous.  Thus, the supersolution condition must fail in the interior and we may select $B_r(x) \subseteq U$ and smooth $w \in C^\infty(B_r(x))$ such that $w(x) = \underline u(x)$, $w < \underline u$ in $B_r(x) \setminus \{ y \}$, and $F(D w, D^2 w) < \inf_{B_r(x)} f^2$ in $B_r(x)$.

  Since $F$ is continuous, we may replace $w$ by $y \mapsto w(y) - \beta |y - x|^2$ for some $\beta > 0$ so that $D^2 w(x)$ is negative definite on the subspace $\{ q \in \R^d : q \cdot Dw(x) = 0 \}$.  In this case, we see that either $Dw(x) = 0$ or $F(Dw(x),D^2w(x)) > 0$.  Applying either \lref{testfunctions} or \lref{zerofunction}, we can make $r > 0$ smaller and replace $w$ with a piecewise subsolution of $F(Dh, D^2 h) = f^2$ in $B_r(x)$.

  Since $\underline u$ is merely lower semicontinuous at this stage of the proof, we do not know how to choose piecewise subsolutions such that $v_k \in C(\bar U)$ such that $v_k \to \underline u$ uniformly.  However, since $w$ is continuous, for any compact $K \subseteq \{ \underline u > w \}$, we can choose a piecewise subsolution $v \in C(\bar U)$ such that $v > w$ on $K$.  Indeed, we find a piecewise subsolution above $\varphi$ in a neighborhood of every point in $K$, choose a finite cover, and then compute the maximum of the finite set of piecewise subsolutions.

  We choose a piecewise subsolution $v \in C(\bar U)$ and $\delta > 0$ such that $v \leq u$ and $v > w + 2 \delta$ on $B_r(x) \setminus B_{r/2}(x)$.  We then define
  \begin{equation*}
    v'(y) = \begin{cases}
      \max \{ v(y), (w + \delta)(y) \} & \mbox{if } y \in B_r(x) \\
      v(y) & \mbox{otherwise},
    \end{cases}
  \end{equation*}
  which is a piecewise subsolution of the global problem satisfying $v(x) > \underline u(x)$, contradicting the definition of $\underline u$.

  The supersolution case is symmetric and easier, since $F(Dw(x),D^2w(x)) > f(x)^2$ implies that $Dw(x) \neq 0$.
\end{proof}

\begin{remark}
  Our naive use of compactness in the above proof destroys any hope of quantifying the number of pieces in the approximation.  However, it is clear from the definitions that the number of pieces depends on the regularity of the solution.
\end{remark}

\section{Convex Peeling}
\label{s.convex}

\subsection{Comparison lemmas}

We now explain the relation between convex and semiconvex peeling.  Recall the parabolic region $P$ and half space $H$ defined in \sref{semiconvex}.  Consider the bijection $\pi : P \to H$ given by
\begin{equation*}
  \pi(x) = (x_1, ..., x_{d-1}, x_d - \tfrac12 (x_1^2 + \cdots x_{d-1}^2)).
\end{equation*}
Since $\det D \pi = 1$, if $X \sim Poisson(\id_P)$, then $\pi(X) \sim Poisson(\id_H)$.  Moreover, the sets $(y - P) \cap H$ for $y \in H$ are exactly the sets $\pi(P \cap \tilde H)$ where $\tilde H \subseteq \R^d$ is a half space such that $P \cap \tilde H$ is bounded.  From this it follows that, if $X \subseteq P$ contains a sequence $\{ x_n \} \subseteq X$ with $e_d \cdot x_n \to \infty$, then
\begin{equation*}
  \pi(K_n(X)) = S_n(\pi(X)) \quad \mbox{and} \quad s_{\pi(X)} \circ \pi = h_X.
\end{equation*}
In particular, if $X \sim Poisson(\id_P)$, then the above holds almost surely.

Using the monotonicity from \lref{monotone} and its immediate analogue for convex peeling, we prove a local connection between convex and semiconvex peeling.  Both of the following lemmas make use of the geometry illustrated in \fref{pitransform}.

\begin{figure}[h]
  \begin{tikzpicture}[scale=.04\textwidth,x=1,y=1]
  \draw[dotted] (-4,4) -- (4,4);
  \draw[dotted] (-3.46,2) -- (3.46,2);
  \draw[dotted] (-2.83,0) -- (2.83,0);
  \draw (-4,8) parabola bend (0,0) (4,8);
  \draw (-3.46,8) parabola bend (0,2) (3.46,8);
  \draw (-2,2) -- (-2,4);
  \draw (2,2) -- (2,4);
  \draw (0,1.5) node {$\pi^{-1}(Q_2)$};
  
  \begin{scope}[xshift=10]  
    \draw[dotted] (-4,-4) parabola bend (0,4) (4,-4);
    \draw[dotted] (-3.46,-4) parabola bend (0,2) (3.46,-4);
    \draw[dotted] (-2.83,-4) parabola bend (0,0) (2.83,-4);
    \draw (-4,0) -- (4,0);
    \draw (-3.46,2) -- (3.46,2);
    \draw (-2,0) -- (-2,2);
    \draw (2,0) -- (2,2);
    \draw (0,1) node {$Q_2$};
  \end{scope}
  
  \draw[->] (4.5,2) -- (5.5,2);
  \draw (5,2.5) node {$\pi$};
\end{tikzpicture}
  \caption{The local behavior of the transformation $\pi$.}
  \label{f.pitransform}
\end{figure}
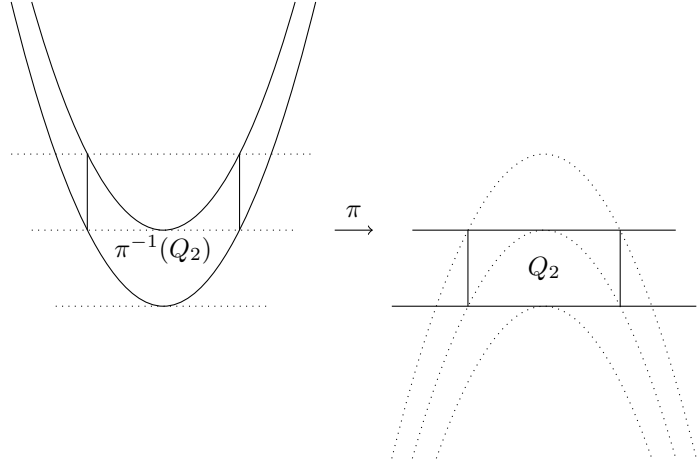

\begin{lemma}
  \label{l.compareup}
  If $X \subseteq \R^d$ and
  \begin{equation*}
    K_1(X) \subseteq P \cup (2 e_d + H),
  \end{equation*}
  then
  \begin{equation*}
    h_X \leq s_{(\pi(X) \cap Q_2) \cup (H \setminus Q_2)} \circ \pi \quad \mbox{in } \pi^{-1}(Q_2)
  \end{equation*}
\end{lemma}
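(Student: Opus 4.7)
The plan is to apply the analogue of \lref{monotone} for convex peeling---namely, that if $K_1^* \supseteq K_2^* \supseteq \cdots$ are nested convex sets with $X \subseteq \bigcup_n \partial K_n^* \cup \bigcap_n K_n^*$, then $h_X(x) \leq \sum_n \id_{K_n^*}(x)$---with a family $K_n^*$ engineered from the semiconvex peels of $Y := (\pi(X) \cap Q_2) \cup (H \setminus Q_2)$ so that $\sum_n \id_{K_n^*}(x) = s_Y(\pi(x))$ on $\pi^{-1}(Q_2)$. Using the explicit bijection between downward parabolic caps $(z-P) \cap H$ in $H$ and bounded half-space caps $\tilde{H} \cap P$ in $P$, I would write each peel as $S_n(Y) = H \setminus \bigcup_{\alpha \in A_n}((z_\alpha - P) \cap H)$, lift each parabola to the corresponding closed half-space $\tilde{H}_\alpha \subseteq \R^d$, and set $\tilde{K}_n := \bigcap_{\alpha \in A_n} \tilde{H}_\alpha^c$ and $K_n^* := K_1(X) \cap \tilde{K}_n$. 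Then $\tilde{K}_n$ is convex, $\tilde{K}_n \cap P = \pi^{-1}(S_n(Y))$, and for $x \in \pi^{-1}(Q_2) \cap K_1(X)$ one has $\id_{K_n^*}(x) = \id_{S_n(Y)}(\pi(x))$, so the summed indicators reproduce $s_Y(\pi(x))$ (up to the harmless distinction between $\id_{S_n(Y)}$ and $\id_{\interior S_n(Y)}$ on peel boundaries).

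To invoke the monotonicity lemma I must verify $X \subseteq \bigcup_n \partial K_n^* \cup \bigcap_n K_n^*$, which splits into three subcases according to the position of $\xi \in X$: (i) $\xi \in X \cap P \cap \pi^{-1}(Q_2)$, handled by the $\pi$-correspondence since $\pi(\xi) \in \pi(X) \cap Q_2 \subseteq Y$ sits on $\partial S_m(Y)$ for the peel index $m$ at which it is removed, placing $\xi \in \partial \tilde{K}_m \cap P \subseteq \partial K_m^*$; (ii) $\xi \in X \cap P$ with $\pi(\xi) \in H \setminus Q_2$, which is automatic from $H \setminus Q_2 \subseteq \bigcap_n S_n(Y)$; and (iii) $\xi \in X \setminus P$, necessarily in $(2 e_d + H) \setminus P$ by the hypothesis, and in particular satisfying $|\xi^d| > 2$.

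Case (iii) is the main obstacle and the only place the hypothesis $K_1(X) \subseteq P \cup (2 e_d + H)$ does essential work. The constraint $\tilde{H}_\alpha \cap P \subseteq \pi^{-1}(Q_2)$ controls $\tilde{H}_\alpha$ only inside $P$, and explicit examples show that $\tilde{H}_\alpha \cap (\{x_d > 2\} \setminus P)$ can be non-empty, so an outlier $\xi \in X \setminus P$ can a priori lie in some $\tilde{H}_\alpha$. The crucial geometric step is to show that such outliers nevertheless satisfy $\xi \in \bigcup_n \partial K_n^* \cup \bigcap_n K_n^*$, using the convexity of $K_1(X)$, its containment in the non-convex set $P \cup (2 e_d + H)$, and the boundary structure of $\pi^{-1}(Q_2)$---perhaps via a refinement of $\tilde{K}_n$ near the corner $\partial P \cap \{x_d = 2\}$, or a direct convex-combination argument tracing the specific geometry of the parabolic caps. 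This is the essentially geometric heart of the proof.
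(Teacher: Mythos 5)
Your strategy---pull the semiconvex peels of $Y=(\pi(X)\cap Q_2)\cup(H\setminus Q_2)$ back to nested convex sets and invoke the convex analogue of \lref{monotone}---is sound in spirit, but the proof is incomplete at precisely the step you yourself flag as its ``geometric heart.'' To apply the monotonicity lemma you must verify $X\subseteq\bigcup_n\partial K_n^*\cup\bigcap_n K_n^*$, and your case (iii) correctly identifies the obstruction: an outlier $\xi\in X\cap((2e_d+H)\setminus P)$ can lie in the extension of a lifted half-space $\tilde{H}_\alpha$ beyond $P$, so that $\xi$ drops out of $K_n^*$ without ever lying on the boundary of an earlier piece. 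You offer only tentative remedies (``perhaps via a refinement \dots or a direct convex-combination argument'') and no actual argument; since nothing in your construction constrains $\tilde{H}_\alpha$ outside $P$, this is a genuine gap rather than a routine verification---and it is exactly the point where the hypothesis $K_1(X)\subseteq P\cup(2e_d+H)$ has to do its work.

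The paper closes this gap with a different and simpler device: enlarge the point set before transporting to $H$. By monotonicity of convex peeling in the point set, $h_X\le h_{X\cup(2e_d+H)}$, and the hypothesis guarantees $X\cup(2e_d+H)=(X\cap P)\cup(2e_d+H)$, so every outlier is swallowed by the added half-space. For this enlarged configuration, any open half-space disjoint from the current point set must avoid $2e_d+H$ and hence cuts a \emph{bounded} cap out of $P$, so the exact correspondence $h=s\circ\pi$ applies and yields $h_{X\cup(2e_d+H)}=s_{\pi(X\cap P)\cup(H\setminus(2e_d-P))}\circ\pi$, using $\pi((2e_d+H)\cap P)=H\setminus(2e_d-P)$. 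Finally, since $(2e_d-P)\cap H\subseteq Q_2$ and $Y$ already contains all of $H\setminus Q_2$, adjoining the points of $H\setminus(2e_d-P)$ does not change the semiconvex peeling of $Y$, and monotonicity of semiconvex peeling finishes the estimate. If you wish to salvage your construction, this ``add the whole half-space $2e_d+H$ first'' step is the missing idea; without it, case (iii) remains unproved.
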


\begin{proof}
  Observe that, if $H \setminus Q_2 \subseteq Y \subseteq H$, then
  \begin{equation*}
    s_{Y \cup (\R^d \setminus (2 e_d - P))} = s_Y \quad \mbox{in } H.
  \end{equation*}
  Now consider $Y = (\pi(X) \cap Q_2) \cup (H \setminus Q_2)$.  Using the hypothesis $K_1(X) \subseteq P$, we obtain
  \begin{equation*}
    h_{X \cup (2 e_d + H)} = s_{Y \cup (\R^d \setminus (2 e_d - P))} \circ \pi.
  \end{equation*}
  By monotonicity
  \begin{equation*}
    h_X \leq h_{X \cup (2e_d + H)}.
  \end{equation*}
  Conclude by combining the above three observations.
\end{proof}

\begin{lemma}
  \label{l.comparedown}
  If $n\geq 1$, $X \subseteq \R^d$ and
  \begin{equation*}
    K_n(X) \supseteq (2 e_d + P) \setminus (4 e_d + H),
  \end{equation*}
  then
  \begin{equation*}
    h_X \geq s_{\pi(X) \cap Q_2} \circ \pi \quad \mbox{in } \pi^{-1}(Q_2) \setminus K_n(X).
  \end{equation*}
\end{lemma}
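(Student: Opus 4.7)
My plan mirrors the proof of \lref{compareup} but in reverse, bounding $h_X$ from below by constructing a decreasing family of semiconvex sets $\tilde S_k\subseteq H$ that contain the semiconvex peels of $Y:=\pi(X)\cap Q_2$. The candidate is $\tilde S_k=\semi(\pi(K_k(X)\cap P))$, which is semiconvex by construction and decreasing in $k$ because the convex peels $K_k(X)$ are.

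The geometric heart of the argument is the non-expansion identity $\tilde S_k\cap Q_2=\pi(K_k(X)\cap P)\cap Q_2$ for $k\le n$. To prove it, given any $z\in\pi^{-1}(Q_2)\setminus K_k(X)$ I will separate $z$ from $K_k(X)$ by an open halfspace $\tilde H$ with $\tilde H\cap P$ bounded. By the identification recalled at the start of \sref{convex}, $\pi(\tilde H\cap P)$ is then a downward parabola $(w-P)\cap H$ containing $\pi(z)$ and disjoint from $\pi(K_k(X)\cap P)$, so $\pi(z)\notin\tilde S_k$. The hypothesis $K_n(X)\supseteq(2e_d+P)\setminus(4e_d+H)$ puts the vertical segment $\{(z^d,t):2+\tfrac12|z^d|^2<t\le 4\}$ inside $K_k(X)\supseteq K_n(X)$; since $\pi(z)\in Q_2$ forces $|z^d|<2$ and $z_d<2+\tfrac12|z^d|^2$, this segment is nonempty and lies strictly above $z$. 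Writing $\tilde H=\{\nu\cdot x<c\}$, the requirement that it miss this segment while containing $z$ forces $\nu_d>0$, after which the quadratic growth of $\tfrac12|x^d|^2<x_d$ defining $P$ dominates the linear growth of the constraint $\nu\cdot x<c$ for large $|x^d|$, making $\tilde H\cap P$ bounded.

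With the non-expansion identity in hand, I will prove $S_k(Y)\subseteq\tilde S_k$ for $1\le k\le n$ by induction. The base case uses $Y\subseteq\pi(K_1(X)\cap P)\subseteq\tilde S_1$ and the semiconvexity of $\tilde S_1$. For the inductive step, the identity lets me rewrite $Y\cap\interior\tilde S_k\subseteq\pi(X\cap\interior K_k(X)\cap P)$; then $X\cap\interior K_k(X)\subseteq K_{k+1}(X)$ by the definition of convex peeling yields $\semi(Y\cap\interior S_k(Y))\subseteq\tilde S_{k+1}$. To close the argument, fix $x\in\pi^{-1}(Q_2)\setminus K_n(X)$ and set $y=\pi(x)$: for $k\le n$, $y\in\interior S_k(Y)$ implies $y\in\interior\tilde S_k\cap Q_2=\pi(\interior K_k(X)\cap P)\cap Q_2$ and hence $x\in\interior K_k(X)$. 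In particular the $k=n$ case together with $x\notin K_n(X)$ forces $y\notin\interior S_n(Y)$, and by monotonicity of $S_k(Y)$ in $k$ also $y\notin\interior S_k(Y)$ for all $k\ge n$. Summing indicators over $k\ge 1$ gives $s_Y(y)\le h_X(x)$.

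The main obstacle is the halfspace-to-parabola conversion in the second paragraph: the hypothesis is used precisely to force every separating halfspace for $z$ to be ``downward'' (i.e.\ have $\nu_d>0$), so that its intersection with $P$ is bounded and it corresponds under $\pi$ to a downward parabola of the type used in the definition of $\semi$. Without such a slab-containment hypothesis, sideways or upward separating halfspaces would have unbounded intersection with $P$, the set $\tilde S_k$ could strictly contain $\pi(K_k(X)\cap P)$ inside $Q_2$, and the interior bookkeeping in the conclusion would fail.
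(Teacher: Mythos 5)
Your proof is correct, but it takes a genuinely different route from the paper's. The paper's argument is a three-step reduction: it adjoins to $X$ the solid set $Q=\{x\in 2e_d+P : x_1^2+\cdots+x_{d-1}^2<4\}$, observes that the hypothesis forces $h_X=h_{X\cup Q}$ on $\pi^{-1}(Q_2)\setminus K_n(X)$, invokes the global conjugation identity $h_{X\cup Q}=s_{\pi(X\cup Q)}\circ\pi$ (valid because $X\cup Q$ contains points of $P$ with $x_d\to\infty$), and finishes with the monotonicity $s_{\pi(X\cup Q)}\geq s_{\pi(X)\cap Q_2}$ from \lref{monotone}. You instead re-derive a localized form of that conjugation identity from scratch: the family $\tilde S_k=\semi(\pi(K_k(X)\cap P))$ together with your non-expansion identity $\tilde S_k\cap Q_2=\pi(K_k(X)\cap P)\cap Q_2$ is essentially the content that the paper's step ``$h_X=h_{X\cup Q}$'' leaves implicit, and your observation that the slab $(2e_d+P)\setminus(4e_d+H)\subseteq K_k(X)$ sits vertically above every point of $\pi^{-1}(Q_2)$ and hence forces any separating halfspace to have $\nu_d>0$ (so that its intersection with $P$ is bounded and corresponds under $\pi$ to a downward parabola) is exactly the right geometric mechanism. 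The paper's route is shorter because it outsources the work to the already-established global identity; yours is longer but self-contained and makes the role of the hypothesis explicit. Two points worth stating when you write it up: the strict separation of $z$ from $K_k(X)$ by an \emph{open} halfspace uses that the peels are closed, which the paper builds into its definition of convex peeling (and even without it, only the interior version of your identity is ever used, and $\interior(\overline{K_k})=\interior(K_k)$ for convex sets with nonempty interior); and the passage from the set identity to its interior version $\interior(\tilde S_k)\cap Q_2=\pi(\interior(K_k(X))\cap P)\cap Q_2$, which you use in both the induction and the final counting, should be justified by noting that $\pi$ is a global homeomorphism and that $Q_2$ and $P$ are open.
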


\begin{proof}
  Consider the intersection of a parabolic region and a cylinder
  \begin{equation*}
    Q = \{ x \in 2 e_d + P : x_1^2 + \cdots + x_{d-1}^2 < 4 \}.
  \end{equation*}
  The hypothesis $K_n(X) \supseteq (2 e_d + P) \setminus (4 e_d + H)$ implies that
  \begin{equation*}
    h_X = h_{X \cup Q} \quad \mbox{in } \pi^{-1}(Q_2) \setminus K_n(X).
  \end{equation*}
  By the discussion above,
  \begin{equation*}
    h_{X \cup Q} = s_{\pi(X \cup Q)} \circ \pi.
  \end{equation*}
  By monotonicity,
  \begin{equation*}
    s_{\pi(X \cup Q)} \circ \pi \geq s_{\pi(X) \cap Q_2} \circ \pi.
  \end{equation*}
  Conclude by combining the above three observations.
\end{proof}

\subsection{Local height functions}

Combining the above comparison lemmas and the fluctuation bounds from \tref{semiconvex}, we constrain the local behavior of the convex height function of a Poisson cloud.  Recall that if $X \sim Poisson(\id_P)$, then $\pi(X) \sim Poisson(H)$ and $s_{\pi(X)} \circ \pi = h_X$ holds almost surely.  In particular, \tref{semiconvex} suggests that $h_X \approx \max \{ 0, \alpha \varphi \}$, where
\begin{equation*}
  \varphi(x) = x_d - \tfrac12 (x_1^2 + \cdots + x_{d-1}^2).
\end{equation*}
Using the comparison lemmas, we use this idea to show that $\varphi$ forms a local barrier for convex height functions.

\begin{definition}
  A local height function for a set $X \subseteq B_1$ is a function $h : B_1 \to \N$ such that $h = h_Y | B_1$ for some finite set $Y \subseteq \R^d$ that satisfies $Y \cap B_1 = X$.  Let $\mathcal{H}(X)$ denote the set of local height functions of $X \subseteq B_1$.
\end{definition}

We consider perturbations of $\varphi$ of the form $\tilde \varphi = \sigma \circ \varphi$ where $\sigma \in C^1(\R)$ satisfies either $\sigma' > 1$ or $0 < \sigma' < 1$.  Note that $\tilde \varphi$ has the same level sets as $\varphi$, but they evolve at different rates. We show the two types of perturbations form upper and lower barriers, respectively.

\begin{lemma}
  \label{l.upperbarrier}
  If $\sigma \in C^\infty(\R)$ satisfies $\sigma' > 1 + \lambda>1$ and $\sigma'' \geq 0$, $m > 2$, $X \sim Poisson(m \id_{B_1})$, and $\psi = \alpha m^{\frac{2}{d+1}} \sigma \circ \varphi$, then
  \begin{multline}
    \label{e.stricttouchabove}
    \P[\sup_{B_1} (h - \psi) = \sup_{B_{1/3}} (h - \psi) \mbox{ for some } h \in \mathcal{H}(X)] \\ \leq C \exp( - c \lambda^{2/3} (\log m)^{-2} m^{1/3(d+1)}).
  \end{multline}
\end{lemma}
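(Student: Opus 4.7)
The plan is to reduce the ``touching from below by $h$'' event to a quantitative lower deviation for semiconvex peeling, which can then be bounded using \tref{semiconvex}. Suppose the event holds, so there is an extension $Y \supseteq X$ with $Y \cap B_1 = X$ and a point $x_0 \in \overline B_{1/3}$ with $M := \sup_{B_1}(h_Y - \psi)$ attained there. Write $k_0 = h_Y(x_0)$, so $k_0 = \psi(x_0) + M$. The inequality $h_Y \le \psi + M$ on $B_1$, combined with the convexity of $\sigma$ and the strict bound $\sigma' > 1 + \lambda$, gives the level-set inclusion
\[
\{h_Y \ge k_0 - n\} \cap B_1 \;\subseteq\; \Bigl\{\varphi \ge \varphi(x_0) - \tfrac{n}{(1+\lambda)\alpha m^{2/(d+1)}}\Bigr\} \cap B_1
\quad\text{for all } n \ge 0.
\]
This encodes that the $(k_0-n)$-th convex peel of $Y$, when restricted to $B_1$, sits in a translate of the parabolic region $P$ whose vertex sinks by $1/((1+\lambda)\alpha m^{2/(d+1)})$ per peel---slower than the $1/(\alpha m^{2/(d+1)})$ rate expected for the cell problem.

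The second step is an affine normalization. Using the unimodular shear that sends the level hypersurface of $\varphi$ through the appropriate height to $\partial P$, then the parabolic dilation $\Phi_m(y) = (m^{1/(d+1)}y^d, m^{2/(d+1)}y_d)$, and finally a vertical translation, we reduce to an intensity-one Poisson cloud on an ellipsoidal domain containing a cylinder $Q_{cm^{1/(d+1)}}$. Choose $N = \lfloor c(1+\lambda)\alpha\, m^{1/(d+1)}\rfloor$ and pass to the subcloud $Y' = Y \cap K_{k_0 - N}(Y)$. Then $K_1(Y') = K_{k_0-N}(Y)$ and $h_{Y'}$ at the image $z_0$ of $x_0$ equals $N+1$. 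After the translation, the inclusion in Step~1 gives $K_1(Y') \subseteq P \cup (R e_d + H)$ with $R \asymp N$, so an easy parabolic rescaling of \lref{compareup} (obtained by replacing the shield height $2$ with $R$ throughout) yields
\[
N+1 \;=\; h_{Y'}(z_0) \;\le\; s_{(\pi(Y') \cap Q_{CR}) \cup (H \setminus Q_{CR})}\!\bigl(\pi(z_0)\bigr),
\qquad \pi(z_0) \in \partial^+ Q_{r_0},
\]
with $r_0 = N/((1+\lambda)\alpha) \asymp m^{1/(d+1)}$.

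By \tref{semiconvex} applied to the (distributionally) intensity-one Poisson cloud $\pi(Y')$ on $H$, the semiconvex height at $\partial^+ Q_{r_0}$ is at most $\alpha r_0 + (\log r_0)^3 r_0^{1/2} t$ except with probability $C\exp(-ct^{2/3})$. Since $\alpha r_0 = N/(1+\lambda)$, the bound $N+1$ exceeds $\alpha r_0$ by $\lambda N/(1+\lambda) \asymp \lambda r_0$, and asking $(\log r_0)^3 r_0^{1/2} t \asymp \lambda r_0$ forces $t \asymp \lambda r_0^{1/2}(\log r_0)^{-3} \asymp \lambda m^{1/2(d+1)}(\log m)^{-3}$. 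Then $t^{2/3} \asymp \lambda^{2/3}(\log m)^{-2} m^{1/3(d+1)}$, matching the target exponent. A union bound over a net of $m^{O(1)}$ candidate touching points $x_0$ in $\overline B_{1/3}$ (using the Lipschitz regularity of peel boundaries, as in \sref{semiconvex}) is absorbed by the exponential.

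The main obstacle will be handling the adversarial extension $Y$ in Step~1: the event involves \emph{some} $h \in \mathcal H(X)$, but the conclusion must be a statement about $X$ alone. The key observation should be that the level-set inclusion in Step~1 only invokes $h_Y$ on $B_1$, and the subcloud $Y' = Y \cap K_{k_0-N}(Y)$ intersected with the rescaled cylinder depends only on $X$ together with which outer peels lie in the parabolic region---a finite combinatorial datum indexed (via peel vertices of $X$) by $m^{O(1)}$ choices. A secondary obstacle is verifying the scaled variant of \lref{compareup}; this requires replacing the shield $\mathbb R^d \setminus (2e_d - P)$ with its scaled counterpart and tracking the identity $s_{Y \cup (\mathbb R^d \setminus (Re_d - P))} = s_Y$ on $H$, but the argument is a routine adaptation of the original proof.
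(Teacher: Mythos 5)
Your proposal follows the same route as the paper's proof: convert the touching event into a level-set inclusion via the bound $\sigma' > 1+\lambda$, apply the affine/parabolic normalization $\tau_w$ and the comparison lemma \lref{compareup} (in its parabolically rescaled form, matched to the cylinder $Q_r$ with $r \asymp m^{1/(d+1)}$) to pass to semiconvex peeling, invoke \tref{semiconvex} with $t \asymp \lambda r^{1/2}(\log r)^{-3}$, and union-bound over polynomially many normalizations. The parameter bookkeeping and the resolution of the "adversarial extension" issue (the final large-deviation event depends only on the restricted cloud, hence only on $X$) are exactly as in the paper, so the proposal is correct and essentially identical in approach.
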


\begin{figure}[h]
  \begin{tikzpicture}[scale=0.25*\textwidth,x=1,y=1]      
  
  \draw[anchor=west] (.85,.6) node {$\{ h \geq h(z) \}$};
  \draw[anchor=west] (.9,.4) node {$\{ \varphi \geq \varphi(w + m^{-2/(d+1)} r e_d) \}$};
  \draw[anchor=west] (.95,.2) node {$\{ h \geq h(z) - n \}$};
  \draw[anchor=west] (1,0) node {$\{ \varphi \geq \varphi(w) \}$};
  
  \draw[ultra thin,lightgray] (0,0) circle (1);
  \draw[ultra thin,lightgray] (0,0) circle (2/3);
  \draw[ultra thin,lightgray] (0,0) circle (1/3);

  \clip (0,0) circle (1);
    
  \draw (-1,1/2) parabola bend (0,0) (1,1/2);
  \draw (-1,0) parabola bend (0,-1/2) (1,0);

  
  \draw (1/4,-1/8) node {\textbullet};
  \draw (1/4,-1/8-.1) node {$z$};
  
  \draw (1/4,-15/32) node {\textbullet};
  \draw (1/4,-15/32-.1) node {$w$};

  \draw[dashed] (-1,2) -- (-1/2,1/2) -- (-1/4,1/8) -- (0,-1/8) -- (1/4,-1/8) -- (1/2,0) -- (1,2/3);
      
  \draw[dashed] (-1,2) -- (-1/2,1/4) -- (-1/4,-1/3) -- (0,-2/5) -- (1/4,-1/3) -- (1/2,-1/4) -- (1,1/6);

\end{tikzpicture}
  \caption{A diagram of the level sets in the proof of \lref{upperbarrier}.}
  \label{f.upperbarrier}
\end{figure}

\begin{proof}
  We use \lref{compareup} to show that the event in \eref{stricttouchabove} is contained in polynomially many events that are controlled by \tref{semiconvex}.  We may assume that $m \geq C$ is large in what follows.

  Define, for $z \in B_{2/3}$, the map
  \begin{equation*}
    \tau_z(x^d, x_d) = \left( m^{\frac{1}{d+1}} (x^d - z^d), m^{\frac{2}{d+1}} (x_d - z_d - z^d \cdot (x^d - z^d)) \right),
  \end{equation*}
  where $x^d = (x_1, ..., x_{d-1})$.  Observe that $\det D \tau_z = m$, $\tau_z(0) = 0$, and $\tau_z(\{ \varphi > \varphi(z) \}) = P$.  Moreover, for any $r \leq c m^{1/(d+1)}$, observe that $(\pi \circ \tau_z)^{-1}(Q_r) \subseteq B_1$.

  Let $\ep > 0$ be a universal constant determined later.  If the event in \eref{stricttouchabove} occurs, then we can choose $z, w \in B_{2/3}$, $r > 0$, and $n \in \N$ such that
  \begin{equation*}
    r = \ep m^{1/(d+1)},
  \end{equation*}
  \begin{equation*}
    z \in (\pi \circ \tau_w)^{-1}(Q_r),
  \end{equation*}
  \begin{equation*}
    \{ \varphi \geq \varphi(w) \} \supseteq B_{2/3} \cap \{ h \geq h(z) - n \},
  \end{equation*}
  and
  \begin{equation*}
    n \geq \alpha (1 + \tfrac12 \lambda) r.
  \end{equation*}
  This is depicted in \fref{upperbarrier}.  Moreover, we may select $w$ from a predetermined list of $C (m \lambda^{-1} \ep^{-1})^C$ many points in $B_{2/3}$.

  We now reduce to a large deviation event parameterized by $w$.  Applying \lref{compareup} to the point set $X \cap \{ h \geq h(z) - n \}$ and making $\ep > 0$ sufficiently small, we obtain
  \begin{equation*}
    \sup_{\partial^+ Q_r} s_{(\pi \circ \tau_w)(X) \cap Q_{Cr}} \geq \alpha (1 + \tfrac14 \lambda r).
  \end{equation*}
  Let $\tilde E_w$ denote the above event.  Since, when $m > 0$ is large and $\ep > 0$ is small, $(\pi \circ \tau_w)(X) \cap Q_{Cr} \sim Poisson(\id_{Q_{Cr}})$, \tref{semiconvex} implies
  \begin{equation*}
    \P[\tilde E_w] \leq C \exp( - \lambda^{2/3} c (\log r)^{-2} r^{1/3}).
  \end{equation*}
  Recalling that $r = \ep m^{1/(d+1)}$ and summing over the polynomially many $w$ yields the lemma.
\end{proof}

The next lemma would be symmetric to the previous lemma, were it not for the fact that we allow $\sigma' = 0$.  Flat spots are necessary for lower test functions, as demonstrated in the previous section.  Handling the flat spots requires an additional appeal to the Poisson law, which is used to control the set of points where the infimum in \eref{stricttouchbelow} is achieved to lie close to the non-flat part of the test functions.  With the geometry under control, we are able to deform from the case $0 \leq \sigma' < 1 - \lambda < 1$ to the case $0 < \sigma' < 1 - \tfrac12 \lambda < 1$.

\begin{lemma}
  \label{l.lowerbarrier}
  If $\sigma \in C^\infty(\R)$ satisfies $0 \leq \sigma' < 1 - \lambda < 1$ and $\sigma'' \leq 0$, $m > 2$, $X \sim Poisson(m \id_{B_1})$, and $\psi = \alpha m^{\frac{2}{d+1}} \sigma \circ \varphi$, then
  \begin{multline}
    \label{e.stricttouchbelow}
    \P[\inf_{B_1} (h - \psi) = \inf_{B_{1/3}} (h - \psi) \mbox{ for some } h \in \mathcal{H}(X)] \\ \leq C \exp( - c \lambda^{2/3} (\log m)^{-2} m^{1/3(d+1)}).
  \end{multline}
\end{lemma}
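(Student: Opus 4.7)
My proof plan follows the structure of Lemma~\ref{l.upperbarrier}, but using \lref{comparedown} in place of \lref{compareup} and the lower fluctuation bound of \tref{semiconvex} in place of the upper bound. As the excerpt already warns, the new wrinkle is that the allowance $\sigma' = 0$ creates flat regions where the concavity estimate on $\psi$ becomes vacuous, and one must separately argue that the infimum cannot sit there.

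The main reduction, granting $\sigma'(\varphi(z)) \geq c\lambda$ on the relevant range, would go as follows. Suppose the event in \eref{stricttouchbelow} holds with the infimum attained at some $z \in B_{1/3}$. I would choose $w \in B_{2/3}$ from a polynomial-sized grid with $\varphi(z) - \varphi(w) \sim \eps m^{-2/(d+1)} r$, where $r := \eps m^{1/(d+1)}$ and $\eps$ is a small universal constant. The infimum condition gives $h(z) - h(y) \leq \psi(z) - \psi(y)$ for every $y$ with $\varphi(y) < \varphi(z)$, and concavity together with $\sigma' < 1 - \lambda$ bounds the right-hand side by $(1-\lambda)\alpha m^{2/(d+1)}(\varphi(z) - \varphi(y))$. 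Applying \lref{comparedown} to the truncation $X \cap \{h \leq h(w)\}$ after rescaling by $\pi \circ \tau_w$ should convert this into a lower-tail event for semiconvex peeling of the form
\begin{equation*}
  \inf_{\partial^+ Q_r} s_{(\pi \circ \tau_w)(X) \cap Q_{Cr}} \leq \alpha(1 - \tfrac14\lambda) r.
\end{equation*}
By \tref{semiconvex} this has probability at most $C \exp(-c\lambda^{2/3}(\log r)^{-2} r^{1/3})$, and a union bound over the polynomially many $w$ yields the claimed estimate once $r = \eps m^{1/(d+1)}$ is substituted.

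To handle the flat spots, I would first condition on a high-probability event from \lref{lower} guaranteeing that convex peeling grows at a non-trivial rate across every sub-ball of $B_{1/3}$ of diameter $\sim m^{-1/(d+1)}$. On this event, if $z$ sits in a region where $\sigma'$ is identically zero, then $\psi$ is locally constant, so the infimum inequality forces $h(z)$ to be essentially minimal among nearby values of $h$, contradicting the uniform Poisson lower bound. This confines the location of the infimum to the strictly increasing part of $\sigma$; after that one may deform $\sigma$ to a variant $\tilde\sigma$ with $0 < \tilde\sigma' < 1 - \tfrac12\lambda$ which agrees with $\sigma$ near $\varphi(z)$, so the same infimum location works for $\tilde\sigma$, and the previous non-flat argument applies with $\lambda$ replaced by $\lambda/2$.

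The hardest step is this flat-spot control. In \lref{upperbarrier} the strict lower bound $\sigma' > 1 + \lambda$ sidesteps the issue entirely, but for the lower barrier a separate Poisson-density argument is unavoidable. Getting the quantitative version of ``the infimum lies in the non-flat part of $\sigma$'' with exponents matching $\lambda^{2/3}(\log m)^{-2} m^{1/3(d+1)}$, and then verifying that the smooth deformation from $\sigma$ to $\tilde\sigma$ does not disturb the infimum location by more than a negligible amount, will be the principal technical work.
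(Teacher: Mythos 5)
Your main reduction is the paper's Step 1 essentially verbatim (one slip: \lref{comparedown} must be applied to the super-level set $X \cap \{h \geq h(z)-n\}$, which is the convex region, not to $X \cap \{h \leq h(w)\}$), and your idea of using the Poisson law to forbid the infimum from sitting deep inside $\{\sigma' = 0\}$ matches the paper's estimate \eref{minbound} in spirit (the paper does this directly via an empty-half-ball argument at a fixed scale $\delta \sim \ep\lambda$, not via \lref{lower}, which concerns semiconvex peeling and would require a further comparison step to say anything about local growth of $h$). The genuine gap is the final deformation. You propose a concave $\tilde\sigma$ with $\tilde\sigma' > 0$ that agrees with $\sigma$ near $\varphi(z)$ and claim the infimum location is essentially undisturbed. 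But any such $\tilde\sigma$ must satisfy $\tilde\sigma > \sigma$ on the flat set $\{\sigma' = 0\}$, with the excess growing as you move into it; hence $h - \tilde\psi < h - \psi$ there, and the infimum of $h - \tilde\psi$ over $B_1$ can migrate out of $B_{1/3}$ into the flat region, destroying exactly the touching event your non-flat argument needs. Shrinking the perturbation does not help on its own, since then $\tilde\sigma'$ is only qualitatively positive and you have no quantitative control on when migration occurs.

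The missing ingredient is the integrality of $h$. The paper first upgrades the touching event to a quantitative gap, showing (via the Poisson confinement together with an affine and monotone deformation $\tilde\psi = \alpha m^{2/(d+1)}\tau\circ\sigma\circ\varphi\circ a$ that exceeds $\psi$ on $B_{1/3}$ but drops below $\psi - 1$ on the confined region outside $B_{d/2}$) that with the same probability cost one may assume $\inf_{B_1\setminus B_{1/3}}(h-\psi) \geq 1 + \inf_{B_{1/3}}(h-\psi)$. Because $h$ takes integer values, a gap of size $1$ survives the tiny global perturbation $\tilde\sigma(s) = \sigma(s) + (2\alpha m^{2/(d+1)})^{-1}s$, which makes $\tilde\sigma'$ strictly positive everywhere while moving $\psi$ by less than $1$ uniformly on $B_1$; only then does the Step 1 argument apply. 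Without some version of this ``gap of one plus integrality'' mechanism, the step you defer as the principal technical work does not close.
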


\begin{figure}[h]
  \begin{tikzpicture}[scale=0.25*\textwidth,x=1,y=1]      
  
  \draw[anchor=west] (.9,.4) node {$\{ \varphi \geq \varphi(w + m^{-2/(d+1)} r e_d) \}$};
  \draw[anchor=west] (.95,.2) node {$\{ h \geq h(z) \}$};
  \draw[anchor=west] (1,0) node {$\{ \varphi \geq \varphi(w) \}$};
  \draw[anchor=west] (1,-.2) node {$\{ h \geq h(z) - n \}$};
  
  \draw[ultra thin,lightgray] (0,0) circle (1);
  \draw[ultra thin,lightgray] (0,0) circle (2/3);
  \draw[ultra thin,lightgray] (0,0) circle (1/3);

  \clip (0,0) circle (1);
    
  \draw (-1,1/2) parabola bend (0,0) (1,1/2);
  \draw (-1,0) parabola bend (0,-1/2) (1,0);

  \draw (1/4,-1/8) node {\textbullet};
  \draw (1/4,-1/8-.1) node {$z$};
  
  \draw (1/4,-15/32) node {\textbullet};
  \draw (1/4,-15/32-.1) node {$w$};

  \draw[dashed] (-1,1/3) -- (-1/2,-1/10) -- (0,-1/7) -- (1/4,-1/8) -- (1/2,-1/12) -- (1,1/5);
      
  \draw[dashed] (-1,-1/4) -- (-1/4,-2/3) -- (1/4,-2/3) -- (1/2,-3/5) -- (1,-1/4);
  
\end{tikzpicture}
  \caption{A diagram of the level sets in the proof of \lref{lowerbarrier}.}
  \label{f.lowerbarrier}
\end{figure}

\begin{proof}
  Step 1.  We first handle the case $0 < \sigma' < 1 - \lambda < 1$, which is symmetric to the previous lemma.  We use \lref{compareup} to show that the event in \eref{stricttouchbelow} is contained in polynomially many events that are controlled by \tref{semiconvex}.  We may assume that $m \geq C$ is large in what follows.

  Define, for $z \in B_{2/3}$, the map
  \begin{equation*}
    \tau_z(x^d, x_d) = \left( m^{\frac{1}{d+1}} (x^d - z^d), m^{\frac{2}{d+1}} (x_d - z_d - z^d \cdot (x^d - z^d)) \right),
  \end{equation*}
  where $x^d = (x_1, ..., x_{d-1})$.  Observe that $\det D \tau_z = m$, $\tau_z(0) = 0$, and $\tau_z(\{ \varphi > \varphi(z) \}) = P$.  Moreover, for any $r \leq c m^{1/(d+1)}$, observe that $(\pi \circ \tau_z)^{-1}(Q_r) \subseteq B_1$.

  Let $\ep > 0$ be a universal constant determined later.  If the event in \eref{stricttouchbelow} occurs, then we can choose $z, w \in B_{2/3}$, $r > 0$, and $n \in \N$ such that
  \begin{equation*}
    r = \ep m^{1/(d+1)},
  \end{equation*}
  \begin{equation*}
    z \in (\pi \circ \tau_w)^{-1}(Q_{2r} \setminus Q_r),
  \end{equation*}
  \begin{equation*}
    \{ \varphi \geq \varphi(w + m^{-2/(d+1)} r e_d) \} \cap B_{2/3} \subseteq \{ h \geq h(z) \},
  \end{equation*}
  \begin{equation*}
    \{ \varphi \geq \varphi(w) \} \cap B_{2/3} \subseteq \{ h \geq h(z) - n \},
  \end{equation*}
  and
  \begin{equation*}
    n \leq \alpha (1 - \tfrac12 \lambda) r.
  \end{equation*}
  This is depicted in \fref{upperbarrier}.  Moreover, we may select $w$ from a predetermined list of $C (m \lambda^{-1} \ep^{-1})^C$ many points in $B_{2/3}$.

  We now reduce to a large deviation event parameterized by $w$.  Applying \lref{comparedown} to the point set $X \cap \{ h \geq h(z) - n \}$ and making $\ep > 0$ sufficiently small, we obtain
  \begin{equation*}
    \inf_{\partial^+ Q_r} s_{(\pi \circ \tau_w)(X) \cap Q_{Cr}} \leq \alpha (1 - \tfrac14 \lambda r).
  \end{equation*}
  Let $\tilde E_w$ denote the above event.  Since, when $m > 0$ is large and $\ep > 0$ is small, $(\pi \circ \tau_w)(X) \cap Q_{Cr} \sim Poisson(\id_{Q_{Cr}})$, \tref{semiconvex} implies
  \begin{equation*}
    \P[\tilde E_w] \leq C \exp( - \lambda^{2/3} c (\log r)^{-2} r^{1/3}).
  \end{equation*}
  Recalling that $r = \ep m^{1/(d+1)}$ and summing over the polynomially many $w$ yields the lemma.

  Step 2.  In the case $0 \leq \sigma' \leq 1 - \lambda < 1$, we first prove
  \begin{multline}
    \label{e.strictertouchbelow}
    \P[\inf_{B_1 \setminus B_{1/3}} (h - \psi) \geq 1 + \inf_{B_{1/3}} (h - \psi) \mbox{ for some } h \in \mathcal{H}(X)] \\ \leq C \exp( - c \lambda^{2/3} (\log m)^{-2} m^{1/3(d+1)}).
  \end{multline}
  This is essentially immediate once we observe that the bound \eref{stricttouchbelow} established in step 1 only needs the assumption $0 < \sigma'$ to hold qualitatively.  We observe that, if the event in \eref{strictertouchbelow} holds, then the event in \eref{stricttouchbelow} holds for $\tilde \sigma(s) = \sigma(s) + (2 \alpha m^{\frac{2}{d+1}})^{-1} s$.  Thus, provided $m \geq C$, we have $0 < \tilde \sigma < 1 - \tfrac12 \lambda$ and can apply step 1.

  Step 3.  We now establish the result for $0 \leq \sigma' \leq 1 - \lambda < 1$.  We make an additional appeal to the Poisson law of $X$.  First, by a standard covering argument, we may replace the outer ball $B_1$ with the ball $B_d$ to give ourselves more room to work.  Second, we may assume that $\{ \sigma = 0 \} = \{ \sigma' = 0 \} = [0,\infty)$.

  We now constrain the geometry of the set where the infimum in \eref{stricttouchbelow} is achieved. Fix $\delta > 0$ and observe that
  \begin{equation}
    \label{e.minbound}
    \P[\min_{B_\delta(x)} h > \min_{B_{\delta/2}(x)} h \mbox{ for all } B_\delta(x) \subseteq B_d] \geq 1 - C \exp( - c \delta^d m).
  \end{equation}
  Indeed, if $\min_{B_\delta(x)} h = \min_{B_{\delta/2}(x)} h$ holds, then $X \cap B_\delta(x)$ has no points on one side of a hyperplane intersecting $B_{\delta/2}(x)$.  For fixed $B_\delta(x)$, the Poisson law gives an upper bound of $C \exp( - c \delta^d m)$ on the probability of this occuring.  The bound \eref{minbound} follows by covering $B_d$ with polynomially many small balls and computing a union bound.

  The event \eref{minbound} excludes the possibility that the infimum in \eref{stricttouchbelow} occurs at some $x$ with $B_\delta(x) \subseteq \{ \psi = 0 \}$.  That is, when the event in \eref{minbound} holds, then $x \in B_{d-\delta}$ and $\inf_{B_d}(h - \psi) = (h - \psi)(x)$ implies $x \in \{ \varphi < C \delta \}$.

  Next, observe that we can choose affine $a : \R^d \to \R^d$ and $\tau \in C^\infty(\R)$ such that $|Da - I| \leq C \delta$, $|\tau' - 1| \leq C \delta$, $\tau'' \leq 0$, and $\tilde \psi = \alpha m^{\frac{2}{d+1}} \tau \circ \sigma \circ \varphi \circ a$ satisfies
  \begin{equation*}
    \tilde \psi \geq \psi \quad \mbox{in } B_{1/3}
  \end{equation*}
  and
  \begin{equation*}
    \tilde \psi < \psi - 1 \quad \mbox{in } \{ \varphi < C \delta \} \setminus B_{d/2}.
  \end{equation*}
  In particular, if the events in \eref{stricttouchbelow} and \eref{minbound} hold, then
  \begin{equation*}
    \inf_{B_d \setminus B_{d/2}} (h - \tilde \psi) \geq 1 + \inf_{B_{1/3}} (h - \tilde \psi).
  \end{equation*}  
  Here we used the integrality of $h$ to conclude the inequality on $\{ \varphi \geq C \delta \} \setminus B_{d/2}$.

  Set $\delta = \ep \lambda$ and rescale \eref{strictertouchbelow} by the affine map $a$.  Since $a$ is within $C \ep \lambda$ of the identity, making $\ep > 0$ small universal allows us to conclude \eref{stricttouchbelow}.
\end{proof}

\subsection{Scaling limit}

We now prove our main theorem by combining the piecewise approximations from \sref{viscosity} with the above barrier lemmas.  The essential idea is that piecewise subsolutions and supersolutions form global barriers for the convex peeling.  The only remaining difficulty is to incorporate the arbitrary weight density.  For this, we use a standard stochastic domination trick.

\begin{lemma}
  If $Y \sim Poisson(\id_{\R^d \times (0,\infty)})$, $f \in L^1_{loc}(\R^d)$, and $f \geq 0$, then
  \begin{equation*}
    Y_f = \{ x \in \R^d : (x,y) \in Y \mbox{ for some } y \in (0,f(x)) \} \sim Poisson(f).
  \end{equation*}
  Moreover, if $f \leq g$, then $Y_f \subseteq Y_g$. \qed
\end{lemma}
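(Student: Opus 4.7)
The plan is to apply the standard mapping and restriction properties of Poisson point processes. First I would introduce the region
\begin{equation*}
  A_f = \{ (x,y) \in \R^d \times (0,\infty) : 0 < y < f(x) \},
\end{equation*}
which is measurable since $f$ is measurable. By the restriction property of Poisson processes, $Y \cap A_f$ is a Poisson point process on $\R^d \times (0,\infty)$ with intensity $\id_{A_f}$.

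Next I would apply the mapping theorem to the projection $\pi_1 : \R^d \times (0,\infty) \to \R^d$ defined by $\pi_1(x,y) = x$. The pushforward measure of $\id_{A_f} \, dx \, dy$ under $\pi_1$ is exactly $f(x) \, dx$, because Fubini gives
\begin{equation*}
  \int \id_B(x) \id_{A_f}(x,y) \, dx \, dy = \int_B \left( \int_0^{f(x)} dy \right) dx = \int_B f(x)\, dx
\end{equation*}
for every Borel $B \subseteq \R^d$. Since $f \in L^1_{loc}$, the measure $f \, dx$ is locally finite, and since $d \geq 1$ it is non-atomic, so no two points of $Y \cap A_f$ project to the same point almost surely. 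The mapping theorem therefore gives $\pi_1(Y \cap A_f) \sim Poisson(f)$, which is exactly $Y_f$.

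For the monotonicity, I would simply observe that $f \leq g$ implies $A_f \subseteq A_g$ pointwise, so $Y \cap A_f \subseteq Y \cap A_g$ as subsets of $Y$, and hence $Y_f = \pi_1(Y \cap A_f) \subseteq \pi_1(Y \cap A_g) = Y_g$. There is no real obstacle here; the only subtlety worth flagging is the non-atomicity check needed to guarantee that projection is injective on $Y \cap A_f$ with probability one, which is why the result is essentially ``by picture'' and stated without proof in the excerpt.
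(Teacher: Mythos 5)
Your argument is correct. The paper offers no proof at all for this lemma (it is stated with a \qed as a standard thinning/projection fact), and your restriction-plus-mapping-theorem argument, including the Fubini computation of the pushforward intensity and the non-atomicity check ensuring the projection is almost surely injective on $Y \cap A_f$, is exactly the canonical justification the authors are implicitly invoking.
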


The above lemma provides us with a means of locally and monotonically approximating a Poisson cloud with varying density by a Poisson cloud of constant density.  That is, to bound $h_{Y_{mf}}$ from above, it is enough to bound $h_{Y_{mg}}$ from above for some piecewise constant $g \geq f$.  Similarly for bounding below.

\begin{proof}[Proof of \tref{main}]
  Let $U \subseteq \R^d$ be open bounded and convex, $f \in C(\bar U)$ be positive, and let $u \in C(\bar U)$ be the unique solution of \eref{bvpF}.  By \tref{piecewise}, we may select a piecewise subsolution and supersolution $\underline u, \overline u \in C(\bar U)$ of \eref{bvpF} such that $u - \ep \leq \underline u \leq u \leq \overline u \leq u + \ep$.

  Let $X_m = Y_{m f} \sim Poisson(m f)$.  Using the definition of piecewise supersolution, we can cover the event
  \begin{equation*}
    \sup (m^{-2/(d+1)} h_{X_m} - \alpha u) > \ep
  \end{equation*}
  with finitely many simple events as follows.  Let $\psi_k = \sigma_k \circ \varphi \circ a_k \in C^\infty(U_k)$ \red for $1 \leq k \leq N_\varepsilon$ \nc denote the finitely many upper test functions that make up $\overline u$, where $U_k$ is an open ellipsoid such that $U_k = a_k^{-1}(B_{r_k}(x_k))$ for some ball $B_{r_k}(x_k)$.  Thus, if $h_{X_m}$ is too large, then there must be some $k$ such that
  \begin{equation}
    \label{e.kpiecetouch}
    \sup_{a_k^{-1}(B_{r_k}(x_k))} (h_{X_m} - m^{2/(d+1)} \alpha \psi_k) = \sup_{a_k^{-1}(B_{r_k/3}(x_k))} (h_{X_m} - m^{2/(d+1)} \alpha \psi_k).
  \end{equation}
  Recall that
  \begin{equation*}
    F(D \psi_k, D^2 \psi_k) > \sup_{a_k^{-1}(B_{r_k}(x_k)) \cap U} f^2 \quad \mbox{in } B_{r_k}(x_k).
  \end{equation*}
  Let $s_k = \sup_{B_{r_k}(x_k) \cap U} f^2$ and choose $\lambda_k > 0$ such that
  \begin{equation*}
    F(D \psi_k, D^2 \psi_k) = (\sigma_k' \circ \varphi_k \circ a_k) \geq s_k (1 + \lambda_k) \quad \mbox{in } a_k^{-1}(B_{r_k}(x_k)).
  \end{equation*}
  Since $X_m \cap a_k^{-1}(B_{r_k}(x_k)) \subseteq X_{m,k} := Y_{m s_k \id_{a_k^{-1}(B_{r_k}(x_k))}}$, we see that the event \eref{kpiecetouch} is contained in the event that $X_{m,k}$ has a local height function $h$ on $a_k^{-1}(B_{r_k}(x))$ such that
  \begin{equation*}
    \sup_{a_k^{-1}(B_{r_k}(x_k))} (h - m^{2/(d+1)} \alpha \psi_k) = \sup_{a_k^{-1}(B_{r_k/3}(x_k))} (h - m^{2/(d+1)} \alpha \psi_k).
  \end{equation*}
  This is equivalent to
  \begin{equation*}
    \sup_{B_{r_k}(x_k)} (h \circ a_k^{-1} - m^{2/(d+1)} \alpha \sigma_k \circ \varphi) = \sup_{B_{r_k/3}(x_k)} (h \circ a_k^{-1} - m^{2/(d+1)} \alpha \sigma_k \circ \varphi).
  \end{equation*}
  Applying \lref{upperbarrier}, this event has probability bounded by 
  \[C_k \exp(-c_k (\log m)^{-2} m^{1/3(d+1)}).\]
 \red Summing over $k$ the probability is bounded by
\[\sum_{k=1}^{N_\eps} C_k \exp(-c_k (\log m)^{-2}m^{1/3(d+1)}) \leq N_\eps Q_\eps\exp(-q_\eps(\log m)^{-2}m^{1/3(d+1)}),\]
where  $q_\eps:=\min_{1\leq k\leq N_\eps}c_k$ and $Q_\eps:=\max_{1\leq k\leq N_\eps}C_k$. \nc The subsolution bound is identical, using \lref{lowerbarrier} in place of \lref{upperbarrier}.
\end{proof}

\begin{proof}[Proof of \cref{main}]
Let $X_m \sim Poisson(m f)$. Conditioned on $\# X_m = k$, $h_{X_m}$ and $h_{Z_k}$ have the same distribution. Since $\# X_m$ is Poisson with mean $m$ we have
\[\P[\sup_{\bar U} |m^{-2/(d+1)} h_{Z_m} - \alpha h| > \ep] \leq \frac{m!e^{m}}{m^m}\P[\sup_{\bar U} |m^{-2/(d+1)} h_{X_m} - \alpha h|> \ep].\]
The proof is completed with an application of a version of Stirling's formula $m! e^m \leq em^{m+1/2}$ for $m\geq 1$.
\end{proof}

\begin{bibdiv}
  \begin{biblist}

    \bib{Andrews}{article}{
      author={Andrews, Ben},
      title={Contraction of convex hypersurfaces by their affine normal},
      journal={J. Differential Geom.},
      volume={43},
      date={1996},
      number={2},
      pages={207--230},
      issn={0022-040X},
      review={\MR{1424425}},
    }

    \bib{Angenent-Sapiro-Tannenbaum}{article}{
      author={Angenent, Sigurd},
      author={Sapiro, Guillermo},
      author={Tannenbaum, Allen},
      title={On the affine heat equation for non-convex curves},
      journal={J. Amer. Math. Soc.},
      volume={11},
      date={1998},
      number={3},
      pages={601--634},
      issn={0894-0347},
      review={\MR{1491538}},
      doi={10.1090/S0894-0347-98-00262-8},
    }

    \bib{Armstrong-Cardaliaguet}{article}{
      author={Armstrong, Scott},
      author={Cardaliaguet, Pierre},
      title={Stochastic homogenization of quasilinear Hamilton-Jacobi equations and geometric motions},
      note={arXiv:1504.02045}
    }

    \bib{Barnett}{article}{
      author={Barnett, V.},
      title={The ordering of multivariate data},
      note={With a discussion by R. L. Plackett, K. V. Mardia, R. M. Loynes, A.
        Huitson, G. M. Paddle, T. Lewis, G. A. Barnard, A. M. Walker, F. Downton,
        P. J. Green, Maurice Kendall, A. Robinson, Allan Seheult and D. H.
        Young},
      journal={J. Roy. Statist. Soc. Ser. A},
      volume={139},
      date={1976},
      number={3},
      pages={318--355},
      issn={0035-9238},
      review={\MR{0445726}},
      doi={10.2307/2344839},
    }

    \bib{Barron-Goebel-Jensen}{article}{
      author={Barron, E. N.},
      author={Goebel, R.},
      author={Jensen, R. R.},
      title={Quasiconvex functions and nonlinear PDEs},
      journal={Trans. Amer. Math. Soc.},
      volume={365},
      date={2013},
      number={8},
      pages={4229--4255},
      issn={0002-9947},
      review={\MR{3055695}},
    }

    \bib{Brendle-Choi-Daskalopoulos}{article}{
      author={Brendle, Simon},
      author={Choi, Kyeongsu},
      author={Daskalopoulos, Panagiota},
      title={Asymptotic behavior of flows by powers of the gaussian curvature},
      note={arXiv:1610.08933},
    }

    \bib{Cao}{article}{
      author={Frederic, Cao},
      title={Morphological scale space and mathematical morphology},
      booktitle={Scale-space theories in computer vision},
      date={1999},
      pages={164-174}
    }

    \bib{Crandall-Ishii-Lions}{article}{
      author={Crandall, Michael G.},
      author={Ishii, Hitoshi},
      author={Lions, Pierre-Louis},
      title={User's guide to viscosity solutions of second order partial
        differential equations},
      journal={Bull. Amer. Math. Soc. (N.S.)},
      volume={27},
      date={1992},
      number={1},
      pages={1--67},
      issn={0273-0979},
      review={\MR{1118699}},
    }

    \bib{Dalal}{article}{
      author={Dalal, Ketan},
      title={Counting the onion},
      journal={Random Structures Algorithms},
      volume={24},
      date={2004},
      number={2},
      pages={155--165},
      issn={1042-9832},
      review={\MR{2035873}},
      doi={10.1002/rsa.10114},
    }

    \bib{Donoho-Gasko}{article}{
      author={Donoho, David L.},
      author={Gasko, Miriam},
      title={Breakdown properties of location estimates based on halfspace
        depth and projected outlyingness},
      journal={Ann. Statist.},
      volume={20},
      date={1992},
      number={4},
      pages={1803--1827},
      issn={0090-5364},
      review={\MR{1193313}},
      doi={10.1214/aos/1176348890},
    }

    \bib{Evans-Spruck}{article}{
      author={Evans, L. C.},
      author={Spruck, J.},
      title={Motion of level sets by mean curvature. I [ MR1100206
          (92h:35097)]},
      conference={
        title={Fundamental contributions to the continuum theory of evolving
          phase interfaces in solids},
      },
      book={
        publisher={Springer, Berlin},
      },
      date={1999},
      pages={328--374},
      review={\MR{1770903}},
    }

    \bib{Hodge-Austin}{article}{
      author={Hodge, Victoria J.},
      author={Austin, Jim},
      title={A Survey of Outlier Detection Methodologies},
      journal={Artificial Intelligence Review},
      volume={22},
      issue={2},
      pages={85-126},
      date={2014}
    }

    \bib{Kohn-Serfaty}{article}{
      author={Kohn, Robert V.},
      author={Serfaty, Sylvia},
      title={A deterministic-control-based approach to fully nonlinear
        parabolic and elliptic equations},
      journal={Comm. Pure Appl. Math.},
      volume={63},
      date={2010},
      number={10},
      pages={1298--1350},
      issn={0010-3640},
      review={\MR{2681474}},
      doi={10.1002/cpa.20336},
    }

    \bib{Liu-Parelius-Singh}{article}{
      author={Liu, Regina Y.},
      author={Parelius, Jesse M.},
      author={Singh, Kesar},
      title={Multivariate analysis by data depth: descriptive statistics,
        graphics and inference},
      note={With discussion and a rejoinder by Liu and Singh},
      journal={Ann. Statist.},
      volume={27},
      date={1999},
      number={3},
      pages={783--858},
      issn={0090-5364},
      review={\MR{1724033}},
      doi={10.1214/aos/1018031260},
    }

    \bib{Moisan}{article}{
      author={Moisan, Lionel},
      title={Affine plane curve evolution: a fully consistent scheme},
      journal={IEEE Trans. Image Process.},
      volume={7},
      date={1998},
      number={3},
      pages={411--420},
      issn={1057-7149},
      review={\MR{1669520}},
      doi={10.1109/83.661191},
    }

    \bib{Poulos-Papavlasopoulos-Chrissikopoulos}{article}{
      author={Poulos, Marios},
      author={Papavlasopoulos, Sozon},
      author={Chrissikopoulos, Vasilios},
      title={An application of the onion peeling algorithm for fingerprint
        verification purposes},
      journal={J. Inf. Optim. Sci.},
      volume={26},
      date={2005},
      number={3},
      pages={665--681},
      issn={0252-2667},
      review={\MR{2180426}},
      doi={10.1080/02522667.2005.10699670},
    }

    \bib{Rousseeuw-Struyf}{article}{
      author={Rousseeuw, Peter},
      author={Struyf, Anja},
      title={Computation of robust statistics: depth, median, and related measures},
      booktitle={Handbook of Discrete and Computational Geometry},
      pages={1279--1292},
      year={2004},
    }

    \bib{Sapiro-Tannenbaum}{article}{
      author={Sapiro, Guillermo},
      author={Tannenbaum, Allen},
      title={On affine plane curve evolution},
      journal={J. Funct. Anal.},
      volume={119},
      date={1994},
      number={1},
      pages={79--120},
      issn={0022-1236},
      review={\MR{1255274}},
      doi={10.1006/jfan.1994.1004},
    }
    
    \bib{Small}{article}{
      author={Small, Christopher G.},
      title={Multidimensional medians arising from geodesics on graphs},
      journal={Ann. Statist.},
      volume={25},
      date={1997},
      number={2},
      pages={478--494},
      issn={0090-5364},
      review={\MR{1439310}},
      doi={10.1214/aos/1031833660},
    }

    \bib{Suk-Flusser}{article}{
      author={Suk, Tomas},
      author={Flusser, Jan},
      title={Convex layers: a new tool for recognition of projectively deformed point sets},
      booktitle={Computer analysis of images and patterns},
      date={1999},
      pages={454-461}
   }
   \bib{Giga}{book}{
      title={Surface evolution equations},
      author={Giga, Yoshikazu},
      year={2006},
      publisher={Springer}
   }   

	\bib{Andrews1999}{article}{
		title={Gauss curvature flow: the fate of the rolling stones},
		author={Andrews, Ben},
		journal={Inventiones mathematicae},
		volume={138},
		number={1},
		pages={151--161},
		year={1999},
		publisher={Springer}
	}
	\bib{Ishii-Mikami}{article}{
		title={A mathematical model of the wearing process of a nonconvex stone},
		author={Ishii, Hitoshi and Mikami, Toshio},
		journal={SIAM journal on mathematical analysis},
		volume={33},
		number={4},
		pages={860--876},
		year={2001},
		publisher={SIAM}
	}
  \end{biblist}
\end{bibdiv}

\end{document}